\newtheorem{lem}{Lemma}[section]
\newtheorem{prop}{Proposition}[section]
\newtheorem{thm}{Theorem}[section]
\theoremstyle{definition}
\theoremstyle{remark}
\theoremstyle{remark}
\newtheorem{remark}{Remark}[section]
\numberwithin{equation}{section}
\newcommand{\N}{{\mathbb N}}
\newcommand{\R}{{\mathbb R}}
\definecolor{blu}{rgb}{0,0,1}
\title[Multiple normalized solutions for quasi-linear Schr\"odinger equations]{Multiple normalized solutions for quasi-linear Schr\"odinger equations}
\author[Louis Jeanjean]{Louis Jeanjean}
\address{Louis Jeanjean
\newline\indent
Laboratoire de Math\'ematiques (UMR 6623)
\newline\indent
Universit\'{e} de Franche-Comt\'{e}
\newline\indent
16 Route de Gray, 25030  Besan\c{c}on Cedex, France}
\email{louis.jeanjean@univ-fcomte.fr}
\author[Tingjian Luo]{Tingjian Luo}
\address{Tingjian Luo
\newline\indent
School of Mathematics and Information Science
\newline\indent
Guangzhou University
\newline\indent
Guangzhou 510006, P.R. China and
\newline\indent
Laboratoire de Math\'ematiques (UMR 6623)
\newline\indent
Universit\'{e} de Franche-Comt\'{e}
\newline\indent
16 Route de Gray, 25030  Besan\c{c}on Cedex, France}
\email{tingjianluo@gmail.com}
\author[Zhi-Qiang Wang]{Zhi-Qiang Wang}
\address{Zhi-Qiang Wang
\newline\indent
Center for Applied Mathematics
\newline\indent
Tianjin University
\newline\indent
Tianjin 300072, P.R. China and
\newline\indent
Department of Mathematics and Statistic
\newline\indent
Utah State University
\newline\indent
Logan, UT84322, USA}
\email{zhi-qiang.wang@usu.edu}
\begin{document}
\subjclass[2000]{35J50, 35Q41, 35Q55, 37K45}

\keywords{$L^2$-normalized solutions, Liouville type results, Quasi-linear Schr\"odinger equations, Perturbation method}

\begin{abstract} In this paper we prove the existence of two solutions having a prescribed $L^2$-norm for a quasi-linear Schr\"odinger equation. One of these solutions is a mountain pass solution relative to a constraint and the other one a minimum either local or global. To overcome the lack of differentiability of the associated functional, we rely on a perturbation method developed in \cite{LLW}.
\end{abstract}
\maketitle

\section{Introduction}
In this paper, we are concerned with quasi-linear Schr\"odinger equations of the form
\begin{equation}\label{eq1.1}
\left\{
\begin{array}{l}
i \partial_t \varphi+\Delta \varphi +\varphi \Delta (|\varphi|^2)+|\varphi|^{p-1}\varphi=0,\ \mbox{ in } \R^+\times \R^N,\\
\varphi(0,x)=\varphi_0(x),\ \mbox{ in } \R^N,
\end{array}
\right.
\end{equation}
where $p\in (1, \frac{3N+2}{N-2})$ if $N\geq 3$ and $p\in (1, \infty)$ if $N=1,2$, $i$ denotes the imaginary unit and the unknown $\varphi: \R^+\times \R^N \rightarrow \mathbb{C}$ is a complex valued function. Such types of equations appear in various physical fields, for instance in dissipative quantum mechanics, in plasma physics and in fluid mechanics. We refer the readers to \cite{CJS,PSW} and their
 references for more information on the related physical backgrounds.
\medskip

From the physical as well as the mathematical point of view, a central issue is the existence and dynamics of standing waves of \eqref{eq1.1}. By standing waves, we mean solutions of the form $\varphi(t,x)= e^{-i \lambda t}u(x)$, where $\lambda\in \R$ is a parameter. Observe that $e^{-i \lambda t}u(x)$ solves \eqref{eq1.1} if and only if $u(x)$ satisfies the following stationary equation
\begin{equation}\label{eq1.2}
- \Delta u - u\Delta (|u|^2)-\lambda u - |u|^{p-1}u=0,\ \mbox{ in }  \R^N.
\tag{$P_{\lambda}$}
\end{equation}

In \eqref{eq1.2}, when $\lambda\in \R$ appears as a fixed parameter, the existence and multiplicity of solutions of \eqref{eq1.2} have been intensively studied during the last decade. See \cite{AZ, CLW, CJ, CJS, FS, LW, LW1, LWW1, LWW2, LLW0, LLW, LLW1, PSW, RS} and their references therein. We also refer to \cite{AT, AZ, GS, AS} for the uniqueness of ground states of \eqref{eq1.2}. By a ground state we mean
a  solution of \eqref{eq1.2} which minimize among all  nontrivial solutions the associated energy functional
\begin{equation*}
I_{\lambda}(u):=\frac{1}{2}\int_{\R^N}|\nabla u|^2dx - \frac{\lambda}{2}\int_{\R^N}|u|^2dx   +\int_{\R^N}|u|^2|\nabla u|^2dx-\frac{1}{p+1}\int_{\R^N}|u|^{p+1}dx,
\end{equation*}
defined on the natural space
$$\mathcal{X}:= \Big\{u\in W^{1,2}(\R^N):\ \int_{\R^N}|u|^2|\nabla u|^2 dx<\infty \Big\}.$$
It is easy to check that $u$ is a weak solution  of \eqref{eq1.2} if and only if
$$
I_{\lambda}'(u) \phi := \lim_{t \to 0^+}\frac{I_{\lambda}(u+t\phi) - I_{\lambda}(u)}{t}=0,
$$
for every direction $\phi \in C_0^{\infty}(\R^N,\R)$. We also recall, see \cite[Remark 1.7]{CJS}  and \cite{LWW1} for example, that when $N\geq 3$ the value $\frac{3N+2}{N-2}$ corresponds to a critical exponent.\medskip

Compared to semi-linear equations where the term $u\Delta (|u|^2)$ is not present, the search of solutions of \eqref{eq1.2} presents a major difficulty. The functional associated with the quasi-linear term
$$V(u):= \int_{\R^N}|u|^2|\nabla u|^2dx,$$
is non differentiable in the space $\mathcal{X}$ when $N \geq 2$.
To overcome this difficulty, various arguments have been developed. First in \cite{LW, PSW}, solutions of \eqref{eq1.2} are
obtained by minimizing the functional $I_{\lambda}$ on the set
$$\Big\{u\in \mathcal{X}:\ \int_{\R^N}|u|^{p+1}dx = 1 \Big\}.$$
In the proofs of \cite{LW, PSW} the non-differentiability of $I_{\lambda}$ essentially does not come into play. Alternatively in \cite{CJ, LWW1}, by a change of unknown, the quasi-linear problem \eqref{eq1.2} is transformed into a semi-linear problem. For that semi-linear problem standard variational methods can be applied to yield a solution. Also in \cite{LWW2} the authors have developed an approach which works for more general quasi-linear equations and which one reduces the search of solutions of \eqref{eq1.2} to the problem of showing that
$I_{\lambda}$ has a global minimizer on a Nehari manifold. Since these pioneering works, there has been a large literature on the study of the equation \eqref{eq1.2} where are addressed questions of multiplicity, concentration type as well as critical exponent issues. In particular, starting from \cite{LLW} in a series of papers \cite{LW1, LLW0, LLW, LLW1} Liu, Liu and Wang have developed a perturbation method for studying existence and multiplicity of solutions for a general class of quasi-linear elliptic equations including the above discussed model equations.\medskip

In the present paper, motivated by the fact that physicists are often interested in ``normalized solutions" we look for solutions to \eqref{eq1.2} having a prescribed $L^2$-norm. In this aim, for given $c>0$, one can look to minimizers of
\begin{equation}\label{mini1}
m(c):= \inf_{u\in S(c)}J(u),
\end{equation}
where
\begin{equation}\label{set}
S(c):= \Big\{u\in \mathcal{X}:\  \int_{\R^N}|u|^2dx=c \Big\}.
\end{equation}
Here the functional $J: S(c)\rightarrow \R$, is defined as
\begin{eqnarray}\label{func1}
J(u):=\frac{1}{2}\int_{\R^N}|\nabla u|^2dx+\int_{\R^N}|u|^2|\nabla u|^2dx-\frac{1}{p+1}\int_{\R^N}|u|^{p+1}dx.
\end{eqnarray}
It is proved in \cite[Theorem 4.6]{CJS}) that each minimizer $u\in S(c)$ of \eqref{mini1}, corresponds a Lagrange multiplier $\lambda<0$ such that $(u, \lambda)$ solves weakly \eqref{eq1.2}.
\medskip

We collect below the known results concerning the function $c \to m(c)$ and the minimizers of $m(c)$.
\begin{lem}\label{lm0} (\cite[Theorems 1.9, 1.12]{CJS}, \cite[Theorems 1.4, 1.5]{JL2})
Assume that $p\in (1, \frac{3N+2}{N-2})$ if $N\geq 3$ and $p\in (1, \infty)$ if $N=1,2$. Then
\smallskip

\begin{itemize}
  \item [(1)]
    \begin{itemize}
      \item [i)] For all $c>0$, $m(c)\in (-\infty, 0]$ as $p\in (1, 3+\frac{4}{N})$;
      \item [ii)] For all $c>0$, $m(c)=-\infty$ as $p\in (3+\frac{4}{N}, \frac{3N+2}{N-2})$ if $N \geq 3$ and $p\in (3 + \frac{4}{N}, \infty)$ if $N=1,2$;
       \item [iii)] For $p=3+ \frac{4}{N}$, there exists a $c_N>0$, given by
      $$c_N : = \inf \{c>0:\ \exists\ u\in S(c) \mbox{ such that } J(u)\leq 0\},$$
      such that
      \begin{equation*}
      \left\{
      \begin{array}{l}
      m(c)= 0,\ \quad \mbox{ as } c \in (0,\ c_N);\\
      m(c)=-\infty,\ \mbox{ as } c \in (c_N, \infty).
      \end{array} \right.
      \end{equation*}
  \end{itemize}
  \item [(2)] When $p\in (1, 1+\frac{4}{N})$, for all $c>0$, $m(c)<0$ and $m(c)$ has a minimizer.
  \item [(3)] When $p\in (1+\frac{4}{N}, 3+\frac{4}{N})$, there exists a $c(p,N)>0$, given by
 \begin{eqnarray}\label{cpn}
 c(p,N):=\inf \{c>0:\ m(c)<0\},
 \end{eqnarray}
 such that
  \begin{itemize}
    \item [i)] If $c\in (0, c(p,N))$, $m(c)=0$ and $m(c)$ has no minimizer;
    \item [ii)] If $c=c(p,N)$, $m(c)=0$ and $m(c)$ admits a minimizer;
    \item [iii)] If $c\in (c(p,N), \infty)$, $m(c)<0$ and $m(c)$ admits a minimizer.
  \end{itemize}
  \item[(4)] When $p = 3 + \frac{4}{N}$, for all $c>0$, $m(c)$ admits no minimizers.
  \item [(5)] The standing waves obtained as minimizers of $m(c)$ are orbitally stable.
\end{itemize}
\end{lem}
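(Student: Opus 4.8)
The plan is to treat Lemma \ref{lm0} as a constrained minimization problem and to read off all of its assertions from one scaling computation together with Gagliardo--Nirenberg type inequalities, reserving a concentration--compactness argument for the existence of minimizers. For $u\in S(c)$ and $t>0$ I would introduce the mass-preserving dilation $u_t(x):=t^{N/2}u(tx)$, which again belongs to $S(c)$ and satisfies
\begin{equation*}
J(u_t)=\frac{t^{2}}{2}\int_{\R^N}|\nabla u|^{2}\,dx+t^{N+2}\,V(u)-\frac{t^{N(p-1)/2}}{p+1}\int_{\R^N}|u|^{p+1}\,dx.
\end{equation*}
Everything is governed by the three exponents $2$, $N+2$ and $\gamma:=N(p-1)/2$: one has $\gamma=2$ exactly at $p=1+\frac4N$ and $\gamma=N+2$ exactly at $p=3+\frac4N$, which is why these two values appear as thresholds. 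Letting $t\to0^+$ forces $J(u_t)\to0$, so $m(c)\le0$ in every case; letting $t\to\infty$ drives $J(u_t)\to-\infty$ as soon as $\gamma>N+2$, that is $p>3+\frac4N$, which yields $m(c)=-\infty$ and proves (1)(ii).

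For boundedness from below I would pass to $v:=u^{2}$, for which $\int_{\R^N}|\nabla v|^{2}\,dx=4V(u)$ and $\int_{\R^N}|u|^{p+1}\,dx=\int_{\R^N}v^{(p+1)/2}\,dx$, and apply the Gagliardo--Nirenberg inequality to $v$ with $\|v\|_{1}=c$, obtaining
\begin{equation*}
\int_{\R^N}|u|^{p+1}\,dx\le C\,\big(V(u)\big)^{a}\,c^{\,b},\qquad a=\frac{N(p-1)}{2(N+2)},
\end{equation*}
where $a<1$ precisely when $p<3+\frac4N$. Thus for $p<3+\frac4N$ the nonlinear term is sublinear in $V(u)$, Young's inequality makes $J$ coercive on $S(c)$, and $m(c)\in(-\infty,0]$, which is (1)(i). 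If $p<1+\frac4N$ then $\gamma<2$, so the term $-\,(p+1)^{-1}t^{\gamma}\int_{\R^N}|u|^{p+1}\,dx$ dominates as $t\to0^+$ and $m(c)<0$ for every $c>0$, the sign statement in (2). If $1+\frac4N<p<3+\frac4N$ then $2<\gamma<N+2$, the map $t\mapsto J(u_t)$ is positive near $0$, and a short optimization in $t$, in which the dependence on $c$ through the factor $c^{\,b}$ is decisive, shows that $\min_{t}J(u_t)<0$ becomes possible exactly once $c$ exceeds the threshold $c(p,N)$; together with the monotonicity and continuity of $c\mapsto m(c)$ this gives the value statements of (3), the attainment being settled below.

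At the borderline $p=3+\frac4N$ one has $a=1$, the inequality reads $\int_{\R^N}|u|^{p+1}\,dx\le K(c)\,V(u)$ with $K(c)$ increasing in $c$, and the scaling relation collapses to $J(u_t)=\frac{t^{2}}{2}\int_{\R^N}|\nabla u|^{2}\,dx+t^{N+2}\big(V(u)-\tfrac{1}{p+1}\int_{\R^N}|u|^{p+1}\,dx\big)$. Whether the bracket can be made negative is decided by comparing $K(c)$ with $p+1$, and the critical value of $c$ at which this transition occurs is exactly the $c_N$ of (1)(iii); when the bracket stays nonnegative one has $J(u)\ge\frac12\int_{\R^N}|\nabla u|^{2}\,dx>0$ for all $u\in S(c)$, so $m(c)=0$ is never attained. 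Combined with $m(c)=-\infty$ for $c>c_N$, this proves both (1)(iii) and (4).

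The existence of minimizers in (2) and (3)(ii)--(iii) is where the real difficulty lies. Here I would run a concentration--compactness argument on a minimizing sequence $(u_n)\subset S(c)$, bounded in $\mathcal X$ by the coercivity above. The ingredients are the strict subadditivity $m(c)<m(\alpha)+m(c-\alpha)$ for $0<\alpha<c$, which I would extract from the scaling relation (the negativity of $m$ in the relevant range makes a splitting strictly unfavorable and so excludes dichotomy); the exclusion of vanishing, which is impossible since $m(c)<0$ keeps $\int_{\R^N}|u_n|^{p+1}\,dx$ bounded away from $0$; and the weak lower semicontinuity of $V$, most transparent through $v_n=u_n^{2}$ converging weakly in $D^{1,2}(\R^N)$ so that $V(u)\le\liminf V(u_n)$. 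The main obstacle, requiring the most care, is exactly the behaviour of the quasi-linear term under weak convergence: one must verify that along the translated minimizing sequence no mass and no quasi-linear energy escapes, so that the weak limit lies on $S(c)$ and attains $m(c)$; the threshold case $c=c(p,N)$ of (3)(ii), where $m(c)=0$ is nonetheless attained, is the most delicate instance and is handled as a limit of the negative cases. Once relative compactness up to translation is established, (5) follows by the Cazenave--Lions method, orbital stability of the set of minimizers being a direct consequence of this compactness together with the conservation of mass and energy along the flow of \eqref{eq1.1}.
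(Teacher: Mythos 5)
This lemma is not proved in the paper at all: it is imported verbatim from \cite[Theorems 1.9, 1.12]{CJS} and \cite[Theorems 1.4, 1.5]{JL2}, so there is no in-paper argument to compare against. Your outline does follow the strategy of those references (mass-preserving dilations to read off the sign of $m(c)$, Gagliardo--Nirenberg bounds for coercivity, concentration--compactness for attainment, Cazenave--Lions for stability), but two of its steps would fail as written. First, your proof that $m(c)=0$ for small $c$ when $p\in(1+\frac{4}{N},3+\frac{4}{N})$, i.e.\ that $c(p,N)>0$, cannot rest on the single inequality $\int_{\R^N}|u|^{p+1}dx\le C\,V(u)^{a}c^{b}$ with $a=\frac{N(p-1)}{2(N+2)}<1$: since $a<1$, the resulting lower bound $J(u)\ge V(u)-C\,V(u)^{a}c^{b}$ is \emph{negative} whenever $V(u)$ is small, no matter how small $c$ is, so this inequality only yields coercivity and $m(c)>-\infty$, never $m(c)\ge 0$. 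One needs the complementary estimate with exponent strictly greater than one on the gradient terms, namely $\int_{\R^N}|u|^{p+1}dx\le K\big(\int_{\R^N}(1+|u|^2)|\nabla u|^2dx\big)^{N(p-1)/4}c^{\,\cdot}$ together with its variant for $p\ge\frac{N+2}{N-2}$ --- exactly what the paper's Lemma \ref{estimates} is engineered to provide. Likewise, that $c(p,N)<\infty$ requires a test-function computation producing negative energy for large $c$ (as in the proof of Lemma \ref{asymLagrange}); the factor $c^{b}$ sits in an \emph{upper} bound for the nonlinear term and cannot force $J$ below zero.

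Second, the attainment in case (3)(ii), $c=c(p,N)$ where $m(c)=0$, is precisely where the concentration--compactness scheme you describe breaks down: both the strict subadditivity $m(c)<m(\alpha)+m(c-\alpha)$ and the exclusion of vanishing are extracted from $m(c)<0$ and become vacuous when $m(c)=0$. Saying this endpoint ``is handled as a limit of the negative cases'' is not an argument --- a limit of minimizers for $c_n\downarrow c(p,N)$ could perfectly well lose mass or spread out; in \cite{JL2} this attainment is the main new result and requires a separate compactness argument. Two smaller points: your non-attainment discussion for $p=3+\frac{4}{N}$ omits the value $c=c_N$ itself, while statement (4) covers all $c>0$; and since $J$ is not differentiable on $\mathcal{X}$ for $N\ge 2$, extracting the Euler--Lagrange equation and the negative Lagrange multiplier for a minimizer --- which statement (5) and the phrase ``standing waves'' presuppose --- already requires the nonsmooth machinery of \cite[Theorem 4.6]{CJS} rather than a routine multiplier rule.
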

In this paper, we mainly deal with the case $p\in (1+\frac{4}{N}, 3+\frac{4}{N})$, $N\geq 1$. From Lemma \ref{lm0} (3), we know that in this range the functional $J$ has, for $c \geq c(p,N)$, a critical point on $S(c)$, which is a global minimizer. Here we extend this result in two directions. First we prove that there exists a $c_0 \in (0, c(p,N))$ such that, for each $c \in (c_0, c(p,N))$ the functional $J$ admits on $S(c)$ a local minimizer. From Lemma \ref{lm0} (3) i) this local minimizer is not a global one. Secondly we show that when $c \in (c_0, \infty)$ the functional $J$ admits on $S(c)$ a second critical point of mountain pass type. Note that since $J$ is not differentiable we must give a meaning to what we call a critical point of $J$ on $S(c)$. By definition it will be a solution of \eqref{eq1.2}, for some $\lambda \in \R$, belonging to $S(c)$. \medskip

%

The main result of this paper is the following theorem.


\begin{thm}\label{mainresult1}
Assume that $p\in (1+\frac{4}{N}, 3+\frac{4}{N})$ with $N \geq 1$.

i) There exists a $c_0 \in (0, c(p,N))$ such that for any  $c \in (c_0, \infty)$ the functional $J$ admits a critical point $v_c$ on $S(c)$ which is a local minimum of $J$ when $c \in (c_0, c(p,N))$ and a global minimum of $J$ when $c \in [c(p,N), \infty)$. In particular,
\begin{center}
\ $J(v_c)\left\{\begin{matrix}
\ >0,\ \mbox{ if }\ c \in (c_0, c(p,N));\\
\ =0, \ \mbox{ if }\ c = c(p,N);\qquad \\
\ <0,\ \mbox{ if }\ c \in (c(p,N), \infty).
\end{matrix}\right.$
\end{center}
\vskip1mm
ii) Assuming in addition that $p\in (1+\frac{4}{N}, \frac{N+2}{N-2}]$ if $N \geq 5$ there exists a second critical point $u_c \in S(c)$ which satisfies
\begin{enumerate}
\renewcommand{\labelenumi}{(\theenumi)}
\item $J(u_c) >0$ for all $c \in (c_0, \infty)$ and $J(u_c)$ is a mountain pass level.
\vskip1mm
\item $J(u_c) > J(v_c)$ for any $c \in (c_0, \infty)$.
\end{enumerate}
For the two critical points we have
\begin{itemize}
\item [(3)] $u_c$ and $v_c$ are Schwarz symmetric functions.
\item [(4)] There exist  Lagrange multipliers $\lambda_c <0$  and $\beta_c <0$ such that
$(u_c, \lambda_c)$ and $(v_c, \beta_c)$ solve weakly \eqref{eq1.2}.
\end{itemize}
\end{thm}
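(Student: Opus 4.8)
The plan is to use the perturbation method of \cite{LLW} to bypass the non-differentiability of $J$, to solve a regularized constrained problem, and then to pass to the limit. Fix $\theta>N$ and, for $\mu\in(0,1]$, introduce on $W^{1,\theta}(\R^N)\cap\mathcal{X}$ the functional
\[
J_\mu(u):=\frac{\mu}{\theta}\int_{\R^N}\big(|\nabla u|^{\theta}+|u|^{\theta}\big)\,dx+J(u).
\]
Since $W^{1,\theta}(\R^N)\hookrightarrow L^\infty(\R^N)$ for $\theta>N$, the troublesome term $V$ becomes differentiable and $J_\mu\in C^1$. I would run the whole variational argument for $J_\mu$ restricted to $S(c)\cap W^{1,\theta}(\R^N)$, where standard Lagrange-multiplier theory applies, and only at the end recover the critical points $u_c,v_c$ of $J$ on $S(c)$ by letting $\mu\to0^+$.

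The geometry is dictated by the $L^2$-preserving dilation $t\star u:=t^{N/2}u(t\,\cdot)\in S(c)$, for which
\[
J(t\star u)=\frac{t^{2}}{2}\|\nabla u\|_2^{2}+t^{N+2}V(u)-\frac{t^{N(p-1)/2}}{p+1}\|u\|_{p+1}^{p+1}.
\]
The hypothesis $p\in(1+\tfrac4N,3+\tfrac4N)$ is exactly what places the nonlinear exponent $N(p-1)/2$ strictly between the kinetic exponent $2$ and the quasi-linear exponent $N+2$, so each fibre $t\mapsto J(t\star u)$ vanishes at $t=0$, is positive for small $t$, tends to $+\infty$ as $t\to\infty$, and may dip down in between. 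Writing $w=u^2$, so that $V(u)=\tfrac14\|\nabla w\|_2^2$ and $\|u\|_{p+1}^{p+1}=\|w\|_{(p+1)/2}^{(p+1)/2}$, the Gagliardo--Nirenberg inequality bounds $\|u\|_{p+1}^{p+1}$ by a sub-linear power $V(u)^{\delta}$, $\delta=\tfrac{N(p-1)}{2(N+2)}\in(0,1)$, times a power of $c$. This shows that on balls $\{u\in S(c):\|\nabla u\|_2^2\le\rho\}$ the functionals $J$ and $J_\mu$ (uniformly in small $\mu$) are bounded below and that minimizing sequences are bounded in $\mathcal{X}$, producing a potential well. Part i) then follows by constrained minimization over such a ball: I would fix $\rho$, let $c_0$ be the infimum of the masses $c$ for which the value of $J$ on the ball drops strictly below its value on the boundary sphere $\{\|\nabla u\|_2^2=\rho\}$, check $0<c_0<c(p,N)$ using Lemma \ref{lm0}(3), and verify that for $c>c_0$ the infimum over the ball is attained in its interior, hence at an interior local minimizer $v_\mu$; its value is positive for $c\in(c_0,c(p,N))$ and equals $m(c)$ (a global minimum) for $c\ge c(p,N)$. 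The P\'olya--Szeg\H{o} inequality decreases $\|\nabla u\|_2^2$, $\int|\nabla u|^\theta$ and $V(u)=\tfrac14\|\nabla u^2\|_2^2$ while preserving all Lebesgue norms, so I may take $v_\mu$ Schwarz symmetric, which yields the compact embedding $H^1_{\mathrm{rad}}\hookrightarrow\hookrightarrow L^{p+1}$ needed below.

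For part ii) the same well furnishes a mountain-pass geometry on $S(c)$: the local minimum $v_\mu$ is separated both from the ``spreading'' region, where $J_\mu$ returns near the level $m(c)$ as $\|\nabla u\|_2\to0$, and from the outer region, by a barrier strictly above $J_\mu(v_\mu)$. A symmetric min-max over paths joining these regions produces a Palais--Smale sequence for $J_\mu|_{S(c)}$ at a level $d_\mu>J_\mu(v_\mu)$; working in the Schwarz-symmetric subspace and using the Ekeland/deformation machinery gives a critical point $u_\mu$ with $J_\mu(u_\mu)=d_\mu>0$. The extra restriction $p\le\frac{N+2}{N-2}$ when $N\ge5$ is imposed precisely to keep this mountain-pass level below the threshold at which Sobolev-critical effects destroy compactness. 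The crux of the whole argument is the passage $\mu\to0^+$: one must establish bounds on $\|v_\mu\|_{\mathcal{X}}$, $\|u_\mu\|_{\mathcal{X}}$ and on the Lagrange multipliers that are uniform in $\mu$, show that $\mu\int_{\R^N}(|\nabla u_\mu|^\theta+|u_\mu|^\theta)\,dx\to0$ so the perturbation disappears, and upgrade weak convergence to strong convergence in $\mathcal{X}$ so that the limits lie in $S(c)$ (no loss of mass) and solve \eqref{eq1.2} weakly. This is where Schwarz symmetry (for compactness) and the Liouville-type nonexistence results are essential: the latter rule out degenerate limiting profiles and force the limiting multipliers to be negative rather than zero. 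I expect this compactness step, especially for the mountain-pass family $u_\mu$, to be the main obstacle.

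Finally, the sign in (4) and the ordering (2) come from the limiting identities. Testing \eqref{eq1.2} with $u$ gives the Nehari relation
\[
\|\nabla u\|_2^2+4V(u)-\lambda\|u\|_2^2-\|u\|_{p+1}^{p+1}=0,
\]
while dilation invariance yields the Pohozaev (virial) relation
\[
\|\nabla u\|_2^2+(N+2)V(u)-\frac{N(p-1)}{2(p+1)}\|u\|_{p+1}^{p+1}=0.
\]
Eliminating $\|u\|_{p+1}^{p+1}$ and using $p>1+\tfrac4N$ shows $\lambda\|u\|_2^2<0$, hence $\lambda_c<0$ and $\beta_c<0$, which gives (4). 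The strict inequality $J(u_c)>J(v_c)$ in (2) is built into the construction, since the mountain-pass level strictly exceeds the (local or global) minimum value; together with $J(u_c)>0$ from the geometry this gives (1), and the claimed sign of $J(v_c)$ follows from Lemma \ref{lm0}(3). Schwarz symmetry of both critical points, claim (3), is inherited from the symmetrized construction.
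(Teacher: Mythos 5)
Your overall architecture (perturb, solve the regularized constrained problem, let $\mu\to0$) matches the paper's, but three of your key steps do not work as written. First, the local-minimum geometry in part i) is inverted: on $S(c)$ with $c<c(p,N)$ the infimum of $J$ (and of $J_\mu$, uniformly in small $\mu$) over any set containing the ``spreading'' configurations $t^{N/2}u(t\cdot)$ with $t\to0$ is exactly $0$ and is \emph{not} attained, since $J>0$ on the region where $\int(1+|u|^2)|\nabla u|^2$ is small and positive. So minimizing over a ball $\{\|\nabla u\|_2^2\le\rho\}$ cannot produce $v_c$, and your definition of $c_0$ (``the value on the ball drops below its value on the boundary sphere'') is vacuous because the interior infimum is always $0$. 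The paper instead sets $C_k=\{u\in\Sigma_c:\int(1+|u|^2)|\nabla u|^2=k\}$, shows $J_\mu\ge k/4$ on $C_k$ for $k\le k_0$, exhibits a point in the \emph{exterior} region with energy below $\tfrac14 k_0$, and minimizes over $\Sigma_c\setminus\bigcup_{0<k\le k_0}C_k$; the local minimizer lives outside the small-``gradient'' neighbourhood, not inside a ball. Second, you give no mechanism to rule out vanishing of the weak limits of your Palais--Smale sequences. The paper's central device is to build special Palais--Smale sequences satisfying $Q_\mu(u_n)\to0$, where $Q_\mu(u)=\frac{d}{dt}J_\mu(u^t)|_{t=1}$ (via the auxiliary functional on $\Sigma_c\times\R$ for the mountain-pass level, and by rescaling a minimizing sequence for the local minimum); combined with the lower bound $Q_\mu\ge k/4$ on $C_k$ this forces the weak limit to be nontrivial. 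Schwarz symmetry alone does not do this (and for $N=1$ the radial compact embedding fails, which is why the paper works with ``almost Schwarz symmetric'' sequences).

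Third, your proof of (4) is incorrect. Eliminating $\|u\|_{p+1}^{p+1}$ between your Nehari and virial identities gives
\begin{equation*}
\lambda\|u\|_2^2=\frac{(N-2)p-(N+2)}{N(p-1)}\|\nabla u\|_2^2+\frac{2\bigl((N-2)p-(3N+2)\bigr)}{N(p-1)}\int_{\R^N}|u|^2|\nabla u|^2\,dx,
\end{equation*}
and the first coefficient is negative only when $p\le\frac{N+2}{N-2}$; the hypothesis $p>1+\frac4N$ plays no role here. For $N=3,4$ and $p>\frac{N+2}{N-2}$ the sign of $\lambda$ is not determined by these identities, and the paper must invoke the separate Liouville-type result (the dual equation plus Ikoma's lemma) to exclude $\lambda\ge0$; for $N\ge5$ this fails because $(P_0)$ then has a positive radial $L^2$ solution, which is the true reason for the restriction $p\le\frac{N+2}{N-2}$ when $N\ge5$ --- not a Sobolev-critical compactness threshold, as you assert. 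Moreover the sign of the multiplier must be secured \emph{before} passing to the limit (it is what prevents mass loss), and for the local minimizer the paper obtains $\beta_\mu\le\bar\beta<0$ by a different, energy-level argument (using that $\tilde m_\mu(c)$ is small when $c_0$ is close to $c(p,N)$ together with a uniform lower bound on $\int(1+2|v_\mu|^2)|\nabla v_\mu|^2$), which is also why $c_0$ may have to be taken close to $c(p,N)$. None of this is recoverable from the identity you propose.
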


\begin{remark}\label{conjecture}
In \cite[Theorem 1.6]{JL2} it is proved that there exists a $\hat c >0$ such that for all $c \in (0, \hat c)$ the functional $J$ restricted to $S(c)$ has no critical point. It is an open question whether or not we can take $c_0 = \hat c $ in Theorem \ref{mainresult1}. Already it would be interesting to know if the set of $c \in (0, c(p,N)]$ where one can find the two critical points $u_c$ and $v_c$  is an interval.
\end{remark}



In order to obtain multiple critical points we need to use minimax and deformation type arguments, which are more or less standard for smooth variational formulations.
However, the variational functional $J$ here is not well defined in the most natural functions space $W^{1,2}(\R^N)$ let alone being smooth. If a small subspace of $W^{1,2}(\R^N)$ is used then one loses compactness.
To overcome this difficulty of the lack of differentiability of $J$, we apply a perturbation method recently developed in \cite{LLW}. That is, we consider first the perturbed functional
\begin{eqnarray}\label{func2}
J_{\mu}(u):=\frac{\mu}{4}\int_{\R^N}|\nabla u|^4dx + J(u),
\end{eqnarray}
where $\mu\in (0,1]$ is a parameter. For any given $c>0$, we denote
$$\Sigma_c:=\Big \{u\in W^{1,4}\cap W^{1,2}(\R^N):\ \int_{\R^N}|u|^2dx=c \Big\}.$$
One may observe that $J_{\mu}(u)$ is well-defined and $C^1$ in $\Sigma_c$ (see \cite{LLW}).
\vskip2mm

The idea is to look for critical points of $J_{\mu}$, for $\mu >0$ small by using minimax and deformation arguments first. This can be accomplished by rather standard applications of existing techniques. Then, having obtained these critical points for the perturbed problems, we consider convergence of these critical points as $\mu \to 0$. Here we will employe the techniques developed in \cite{LLW} to obtain a certain strong convergence to critical points of the original functional
$J$. \medskip

A first critical point $u_{\mu}^c$ of $J_{\mu}$ is obtained at a level $\gamma_{\mu}(c) >0$ which corresponds to a mountain pass. When $c \in (c_0, c(p,N))$ a second critical point  $v_{\mu}^c$ is obtained as a local minimizer of $J_{\mu}$. The corresponding energy level $\tilde{m}_{\mu}(c)$ is strictly positive. To derive these results we first establish geometric properties of $J_{\mu}$ allowing to search for such critical points. To show that these critical levels are actually reached, several difficulties have to be overcome. Since $J_{\mu}$ is coercive on $\Sigma_c$  any Palais-Smale sequence $\{w_n\} \subset \Sigma_c$ is bounded and thus we can assume that $w_n \rightharpoonup w_c$. It is also standard to show that there exists a $\lambda_{\mu} \in \R$ such that $J'_{\mu}(w_c) - \lambda_{\mu} w_c = 0.$ Finally we can assume that $\{w_n\} \subset \Sigma_c$ satisfies $(w_n - v_n) \to 0$ in $W^{1,4} \cap W^{1,2}(\R^N)$ where $\{v_n\} \subset \Sigma_c$ is a sequence of Schwarz symmetric functions. We shall then say that $\{w_n\} \subset \Sigma_c$ is a sequence of almost Schwarz symmetric functions. This property implies that $w_n \to w_c$ strongly in $L^q(\R^N)$ for $q\in (2, 2 \cdot 2^*)$. The first main difficulty is to show that $w_c \neq 0$. To overcome it we establish, both for  $\gamma_{\mu}(c)$ and $\tilde{m}_{\mu}(c)$, the existence of a Palais-Smale sequence having the additional property that  $Q_{\mu}(w_n) \to 0$. Here $Q_{\mu}(u)$ is defined by
$$Q_{\mu}(u):= \frac{d}{dt}J_{\mu}(u^t)\ |_{t=1},$$
where for any $u \in \Sigma_c$,  $u^t(x):=t^{\frac{N}{2}}u(tx), \, t>0, \, x\in \R^N$. Note that $u^t \in \Sigma_c$ if $u \in \Sigma_c$. Actually the condition that $Q_{\mu}(u)=0$ corresponds to the Pohozaev identity tranferred on the $L^2$-sphere, see \eqref{defQ} for its precise form. \medskip

In the case of $\gamma_{\mu}(c)$ its existence is proved introducing an auxiliary functional on $\Sigma_c \times \R$. This trick of adding one dimension of space, first presented in \cite{LJ} and used recently on various problems \cite{ADP2,BV, HiIkTa, Luo, MoSc}, permits to incorporate into the variational procedure the information that any critical point of $J_{\mu}$ on $\Sigma_c$ must satisfy $Q_{\mu}(u)=0$. For $\tilde{m}_{\mu}(c)$ we can {\it directly} construct a minimizing sequence $\{u_n\} \subset \Sigma_c$ satisfying $Q_{\mu}(u_n) =0, \forall n \in \N$. The existence of a Palais-Smale sequence whose weak limit is non trivial follows. \medskip

Another difficulty is to show that the weak limit $w_c$ belongs to $\Sigma_c$, namely that $||w_c||_{L^2(\R^N)}^2 = c$. For this we need to insure that  $\lambda_{\mu} <0$. In the case of $\tilde{m}_{\mu}(c)$  we show that this is true for any $p \in (1 + \frac{4}{N}, 3 + \frac{4}{N}), N\geq 1$, provided that $c_0$ is sufficiently close to $c(p,N)$. For $\gamma_{\mu}(c) $ we need to require in addition that $ p \in (1 + \frac{4}{N},  \frac{N+2}{N-2}]$ when $N \geq 5.$  This restriction is linked to the fact that  $(P_{\lambda})$ does not have a positive radially, symmetric decreasing solution when $\lambda  \geq 0$ if $p \in (1 + \frac{4}{N}, 3 + \frac{4}{N})$ for $N \leq 4$ or $ p \in (1 + \frac{4}{N},  \frac{N+2}{N-2}]$ when $N \geq 5.$ On the contrary it has such a solution when $\lambda =0$ for $p \in (\frac{N+2}{N-2}, 3 + \frac{4}{N})$ and $N \geq 5$. It is unclear to us if such non existence result is necessary in our problem. Let us mention that we faced in \cite{BJL} a similar issue. There it appears that the presence of a solution corresponding to $\lambda =0$ for a problem related to $(P_{\lambda})$ was an obstacle for the weak limit to belong to the constraint. \medskip

Having proved the existence of the critical points $u_{\mu}^c$ and $v_{\mu}^c$ at the levels $\gamma_{\mu}(c)$ and $\tilde{m}_{\mu}(c)$ respectively we pass to the limit $\mu \to 0$ and we show that $u_{\mu}^c \to u_c$ and $v_{\mu}^c \to v_c$ where $u_c$ and $v_c$ are as presented in Theorem \ref{mainresult1}. At this step we rely on the approach developed in \cite{LLW}  by adopting some arguments though an unconstrained quasi-linear problem was considered there.
\medskip

\vskip2mm

In this paper, we also discuss the behavior of the Lagrange multipliers corresponding to the global minimizers of $J$.
\begin{lem}\label{asymLagrange}
Assume that $p \in (1, 3+ \frac{4}{N})$, $N \geq 1$ and let $v_c$ be a global minimum of $J$ on $S(c)$ and $\beta_c <0$ be its Lagrange multiplier. Then
$\beta_c \to - \infty \mbox{ as } c \to \infty.$
\end{lem}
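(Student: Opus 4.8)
The plan is to reduce the statement to two scaling identities satisfied by the minimizer $v_c$ together with a sharp upper bound on the value $m(c)$, after which the conclusion follows from elementary algebra. Throughout write $A:=\int_{\R^N}|\nabla v_c|^2\,dx$, $B:=\int_{\R^N}|v_c|^2|\nabla v_c|^2\,dx$ and $D:=\int_{\R^N}|v_c|^{p+1}\,dx$, all positive for the nontrivial minimizer.

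First I would record two relations. Since $(v_c,\beta_c)$ solves \eqref{eq1.2} weakly, testing the equation against $v_c$ and integrating the quasi-linear term by parts, using $-\int_{\R^N} v_c^2\,\Delta(v_c^2)=\int_{\R^N}|\nabla(v_c^2)|^2=4B$, yields the Nehari-type identity $\beta_c c=A+4B-D$. For the second relation I use minimality \emph{directly}, which avoids any appeal to differentiability of $J$ on $\mathcal X$: since $v_c^t\in S(c)$ for every $t>0$ and $v_c$ minimizes $J$ on $S(c)$, the explicit scalar function
\[
t\longmapsto J(v_c^t)=\tfrac{t^2}{2}A+t^{N+2}B-\tfrac{t^{N(p-1)/2}}{p+1}D
\]
attains its minimum at $t=1$, so differentiating at $t=1$ gives the Pohozaev identity $A+(N+2)B=\tfrac{N(p-1)}{2(p+1)}D$.

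Next I would combine these. From $m(c)=\tfrac12A+B-\tfrac{1}{p+1}D$ and the Nehari identity,
\[
\beta_c c-2m(c)=2B-\tfrac{p-1}{p+1}D,
\]
while the Pohozaev identity gives $2B\le \tfrac{N(p-1)}{(N+2)(p+1)}D$. Hence $\beta_c c-2m(c)\le -\tfrac{2(p-1)}{(N+2)(p+1)}D\le 0$, i.e.
\[
\beta_c\le \frac{2\,m(c)}{c}\qquad\text{for all large }c,
\]
uniformly in $N\ge1$ and $p\in(1,3+\tfrac{4}{N})$. It therefore suffices to prove that $m(c)/c\to-\infty$ as $c\to\infty$. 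For this I fix $w\in C_c^{\infty}(\R^N)$ with $\|w\|_{L^2}^2=1$ and set $u_L:=\theta_L\,w(\cdot/L)$ with $\theta_L^2L^N=c$, so that $u_L\in S(c)$. A direct computation gives
\[
\frac{J(u_L)}{c}=\frac{\|\nabla w\|_2^2}{2L^2}+\frac{c}{L^{N+2}}\int_{\R^N}w^2|\nabla w|^2\,dx-\frac{c^{(p-1)/2}}{(p+1)\,L^{N(p-1)/2}}\|w\|_{p+1}^{p+1}.
\]
Choosing $L=c^{a}$ with $a\in(\tfrac{1}{N+2},\tfrac1N)$ (a nonempty interval), the first two terms tend to $0$ and the last tends to $-\infty$ as $c\to\infty$; hence $m(c)/c\le J(u_L)/c\to-\infty$, and combined with the previous display $\beta_c\to-\infty$.

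The main obstacle is purely technical and lies in the two identities in the non-smooth quasi-linear setting: justifying that $(P_{\beta_c})$ may be tested against $v_c$ and that the integration by parts for $\int_{\R^N}v_c^2\,\Delta(v_c^2)$ is legitimate, which rests on the known integrability and regularity of the minimizer. Once these are granted, the Pohozaev relation comes for free from scalar minimality, so the genuine content is the Nehari identity and the observation that $m(c)<0$ for large $c$ (guaranteeing $D>0$ and $2m(c)/c<0$); the rest is the one-parameter test-function estimate above.
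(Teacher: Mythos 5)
Your proof is correct and follows essentially the same route as the paper's: both arguments reduce the claim to the inequality $\beta_c \le 2m(c)/c$ combined with the estimate $m(c)/c \to -\infty$, the latter obtained from an explicit two-parameter scaling family in $S(c)$. The only difference is cosmetic: the paper gets $\beta_c c \le 2m(c)$ from the Pohozaev-type identity quoted from \cite[Lemma 4.6]{CJS}, while you combine the Nehari identity with the relation obtained by differentiating $t \mapsto J(v_c^t)$ at $t=1$; the two are algebraically equivalent, and your variant has the minor advantage of deriving the second identity directly from constrained minimality rather than from a Pohozaev identity for the non-smooth functional.
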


 Finally we explicit a  relationship between the ground states of \eqref{eq1.2} and the global minimizers of $m(c)$.
\begin{thm}\label{th1.2}
Assume that $p \in (1, 3 + \frac{4}{N})$, $N \geq 1$  and let $v_c$ be a global minimum of $J$ on $ S(c)$ and $\beta_c<0$ be its Lagrange multiplier. Then $v_c$ is a ground state solution of \eqref{eq1.2} with $\lambda= \beta_c$.
\end{thm}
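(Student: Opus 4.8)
The plan is to verify directly that $v_c$ has the least energy among all nontrivial solutions, i.e. that $I_{\beta_c}(w)\ge I_{\beta_c}(v_c)$ for every nontrivial solution $w$ of \eqref{eq1.2} with $\lambda=\beta_c$; since $(v_c,\beta_c)$ is itself a solution this is exactly the ground state property. Writing $I_\lambda(u)=J(u)-\frac{\lambda}{2}\int_{\R^N}|u|^2dx$ and using $v_c\in S(c)$ with $J(v_c)=m(c)$, the target value is $I_{\beta_c}(v_c)=m(c)-\frac{\beta_c}{2}c$. Fix a nontrivial solution $w$ and put $d:=\int_{\R^N}|w|^2dx$. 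The argument rests on finding a one–parameter scaling $\sigma\mapsto w_\sigma$ with $w_1=w$ such that (i) $\sigma=1$ is a global maximum of $\sigma\mapsto I_{\beta_c}(w_\sigma)$ and (ii) $w_{\sigma_\ast}\in S(c)$ for some $\sigma_\ast>0$. Granting this and using $J\ge m(c)$ on $S(c)$,
\[
I_{\beta_c}(w)=\max_{\sigma>0}I_{\beta_c}(w_\sigma)\ge I_{\beta_c}(w_{\sigma_\ast})=J(w_{\sigma_\ast})-\frac{\beta_c}{2}c\ge m(c)-\frac{\beta_c}{2}c=I_{\beta_c}(v_c).
\]

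The criticality at $\sigma=1$ in (i) comes for free: as $w$ is a weak solution it satisfies the Nehari and Pohozaev identities for $(P_{\beta_c})$, which are precisely $\frac{d}{d\sigma}I_{\beta_c}(w_\sigma)|_{\sigma=1}=0$ for the amplitude scaling $w_\sigma=\sigma w$ and for the dilation $w_\sigma(x)=w(x/\sigma)$ respectively. Since $J$ is not differentiable on $\mathcal{X}$, I read these derivative conditions through the weak formulation, where testing the equation against $w$ and against $x\cdot\nabla w$ is legitimate. The genuine content of (i) is global maximality, and this is where the dimension enters.

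For $N\ge 3$ I would use the dilation $w_\sigma(x)=w(x/\sigma)$, so that $\int_{\R^N}|w_\sigma|^2dx=\sigma^N d$ and $\sigma_\ast=(c/d)^{1/N}$ gives (ii). A direct computation yields $I_{\beta_c}(w_\sigma)=a\,\sigma^{N-2}+b\,\sigma^{N}$ with $a=\frac12\int_{\R^N}|\nabla w|^2dx+\int_{\R^N}|w|^2|\nabla w|^2dx>0$ and $b=-\frac{\beta_c}{2}d-\frac{1}{p+1}\int_{\R^N}|w|^{p+1}dx$; the Pohozaev identity is exactly $(N-2)a+Nb=0$, which forces $b=-\frac{N-2}{N}a<0$. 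Hence $\sigma\mapsto a\sigma^{N-2}+b\sigma^{N}$ has its unique, global maximum at $\sigma=1$, so (i) holds with no restriction on $p$ and the theorem is settled for $N\ge 3$. For $N=1,2$ the pure dilation no longer peaks at $\sigma=1$, so I would use the amplitude scaling $w_\sigma=\sigma w$ ($\sigma_\ast=\sqrt{c/d}$), for which $I_{\beta_c}(\sigma w)=\frac12\big(\int_{\R^N}|\nabla w|^2dx-\beta_c d\big)\sigma^2+\big(\int_{\R^N}|w|^2|\nabla w|^2dx\big)\sigma^4-\frac{1}{p+1}\big(\int_{\R^N}|w|^{p+1}dx\big)\sigma^{p+1}$. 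Its $\sigma^2$–coefficient is positive because $\beta_c<0$; when $p\ge 3$ the nonlinearity dominates the quartic quasi-linear term, the Nehari identity pins the only critical point at $\sigma=1$, and it is the global maximum. As the primary range $p\in(1+\frac4N,3+\frac4N)$ forces $p\ge 3$ for $N\le 2$, this covers the cases of main interest.

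The main obstacle is the leftover low–dimensional regime $N\in\{1,2\}$ with $p<3$, which still occurs in the full range $p\in(1,3+\frac4N)$ (for instance via $p\in(1,1+\frac4N)$, where a global minimizer exists for every $c$). There the quartic term makes $I_{\beta_c}(\sigma w)\to+\infty$, so amplitude scaling no longer peaks at $\sigma=1$, while dilation peaks the wrong way; a single scaling is not enough. The plan is to work instead with the two–parameter family $(\theta,t)\mapsto \theta\,t^{N/2}w(tx)$, to exhibit a path from $(1,1)$ to the slice $\{\theta^2 d=c\}$ along which $I_{\beta_c}$ does not increase, and to close the comparison using the global minimality of $v_c$ together with the monotonicity and the asymptotics $\beta_c\to-\infty$ of the Lagrange multipliers from Lemma \ref{lm0} and Lemma \ref{asymLagrange}. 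Proving this global (not merely local) maximality uniformly, against the competing quasi-linear scaling, is the delicate step; everything else is the soft scaling computation above.
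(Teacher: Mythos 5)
Your core strategy is exactly the paper's: rescale a nontrivial solution of \eqref{eq1.2} with $\lambda=\beta_c$ onto the sphere $S(c)$ without increasing $I_{\beta_c}$, then invoke the global minimality of $v_c$ on $S(c)$ (the only cosmetic difference is that you run the comparison against an arbitrary nontrivial solution $w$ rather than against a ground state $\varphi_{\beta_c}$, which is harmless and even spares you the existence of a ground state). For $N\geq 3$ your dilation computation, with the Pohozaev identity \eqref{poho2.1} forcing $(N-2)a+Nb=0$ and hence a global maximum of $a\sigma^{N-2}+b\sigma^N$ at $\sigma=1$, is literally the paper's computation. One small correction: you abandon the dilation too early for $N=2$. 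There Pohozaev gives $b=0$, so $\sigma\mapsto I_{\beta_c}(w_\sigma)$ is \emph{constant}; $\sigma=1$ is still a (non-strict) global maximum, which is all your chain of inequalities needs. So $N=2$ is covered by the dilation for the whole range $p\in(1,5)$, and your detour through the amplitude scaling (which only handles $p\geq 3$) creates an artificial gap at $N=2$, $p\in(1,3)$ that is not actually there.

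The genuine gap is $N=1$ with $p\in(1,3)$, a case that does occur under the hypotheses of the theorem (e.g.\ $p\in(1,5)$ for $N=1$, where Lemma \ref{lm0} (2)--(3) provides global minimizers). There the dilation turns $\sigma=1$ into a global \emph{minimum} ($b=a>0$), your amplitude scaling fails because the quartic term dominates, and your proposed two-parameter family $(\theta,t)\mapsto\theta t^{N/2}w(tx)$ is only a sketch: you have not exhibited the non-increasing path to the slice $\{\theta^2 d=c\}$, and it is not clear the required monotonicity holds against the competing quasi-linear scaling. The paper does not attempt any scaling argument in this case; it closes $N=1$ by citing the uniqueness of non-negative solutions of \eqref{eq1.2} for fixed $\lambda<0$ (\cite[Theorem 1.3]{CJS}), so that the global minimizer and the ground state must coincide. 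You should either import that uniqueness result or complete the two-parameter construction; as written the proof does not cover $N=1$, $p\in(1,3)$.
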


In Remark \ref{RNC}, we show that the converse of Theorem \ref{th1.2} does not hold.  \\

We end this introduction by pointing out that whether or not the standing waves associated to our critical points are orbitally stable is an open question. We conjecture that our mountain pass solution leads to orbitally unstable standing wave and that a local minimizer leads to a stable one. \\


\textbf{Acknowledgements.} The first author thanks N. Boussaid and T. Cazenave for useful discussions. The authors also express their sincere gratitude to T. Watanabe for providing to them, in addition to  useful references, the complete statement and proof of Lemma \ref{lm-decay}. This work has been carried out in the framework of the project NONLOCAL (ANR-14-CE25-0013), funded by the French National Research Agency (ANR). The third author is supported by NSFC-11271201 and BCMIIS. The authors also would like to thank the referee for his/her valuable comments and suggestions.\\


\subsection{Notations}
In the paper, for any $1\leq s < \infty$, $L^s(\R^N)$ is the usual Lebesgue space endowed
with the norm
$$\|u\|_{s}^s:=\int_{\R^{N}} |u|^sdx,$$
and $W^{1,s}(\R^N)$ the usual Sobolev space  endowed with the norm
$$\|u\|_{W^{1,s}}:=\|\nabla u\|_s + \|u\|_s.$$
We denote by $X$ the space $W^{1,4}\cap W^{1,2}(\R^N)$ equipped with its natural norm $||\cdot||_X := ||\cdot||_{W^{1,4}} + ||\cdot||_{W^{1,2}}$.
Throughout the paper we shall denote by $C>0$ various positive constants which may vary from one line to another but which do not affect the analysis of the problem.

\section{Non-existence results}

In this section, we prove two non existence results for solutions to  \eqref{eq1.2}. These results which have their own interest will be crucially used to control the possible values of our Lagrange parameters. We also discuss the restriction we have to impose when $N \geq 5$.

\begin{lem}\label{lem2.1}
Assume that $p \in (1, \frac{N+2}{N-2}]$ if $N\geq 3$, $p \in (1, \infty)$ if $N=1,2$, and that $\lambda\geq 0$. Then \eqref{eq1.2}
has only trivial solutions in $\mathcal{X}$.
\end{lem}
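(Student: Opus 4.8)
The plan is to combine two integral identities satisfied by any weak solution $u\in\mathcal X$ of \eqref{eq1.2}: the Nehari identity obtained by testing the equation against $u$ itself, and the Pohozaev identity obtained from the scaling $u_\sigma(x):=u(\sigma x)$. Writing $A:=\int_{\R^N}|\nabla u|^2\,dx$, $B:=\int_{\R^N}|u|^2\,dx$, $V:=\int_{\R^N}|u|^2|\nabla u|^2\,dx$ and $D:=\int_{\R^N}|u|^{p+1}\,dx$, these read
\[
A-\lambda B+4V-D=0
\]
and
\[
\frac{2-N}{2}A+\frac{\lambda N}{2}B+(2-N)V+\frac{N}{p+1}D=0 .
\]
The factor $4$ in the first identity reflects the quartic homogeneity of $V$, while the weights $2-N$ attached to $A$ and $V$ in the second encode the fact that both $|\nabla u|^2$ and $|u|^2|\nabla u|^2$ scale like $\sigma^{2-N}$ under $u\mapsto u_\sigma$.

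I would then eliminate $D$: solving the first identity for $D$ and substituting into the second, after multiplying by $2(p+1)>0$, gives
\[
\bigl[2(p+1)-N(p-1)\bigr]A+\lambda N(p-1)B+2\bigl[2(p+1)+N(3-p)\bigr]V=0 .
\]
Since $A,B,V\ge 0$, the whole argument reduces to the signs of the three coefficients. The coefficient of $B$ is $\lambda N(p-1)\ge 0$ because $\lambda\ge 0$ and $p>1$. A direct computation shows $2(p+1)+N(3-p)>0$ throughout the admissible range, as this quantity equals $(2-N)p+(2+3N)$, equals $2N$ at the critical exponent when $N\ge 3$, and is monotone in $p$; hence the coefficient of $V$ is strictly positive. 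Finally the coefficient of $A$ equals $2(p+1)-N(p-1)=(2-N)p+(2+N)$, which is strictly positive for $N=1,2$ and, for $N\ge 3$, is $\ge 0$ precisely when $p\le\frac{N+2}{N-2}$, vanishing exactly at $p=\frac{N+2}{N-2}$. This threshold is exactly the restriction in the statement and is what makes the argument work up to, and including, the critical exponent.

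Because all three terms are nonnegative while the coefficient of $V$ is strictly positive, the displayed identity forces $V=0$, i.e. $\int_{\R^N}|u|^2|\nabla u|^2\,dx=0$. Then $u\nabla u=0$ a.e., so $\nabla(|u|^2)=2u\nabla u=0$ a.e. on the connected set $\R^N$; thus $|u|^2$ is a.e. constant, and being also in $L^1(\R^N)$ that constant must be $0$, whence $u\equiv 0$. Note that $V=0$ alone closes the proof, so the vanishing of the $A$-coefficient at the critical exponent causes no trouble.

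The genuine obstacle is not the algebra but the rigorous justification of the two identities for an arbitrary weak solution $u\in\mathcal X$. Testing against $u$ requires $u$ to be an admissible test function: the hypothesis $p\le\frac{N+2}{N-2}$ ensures $p+1\le 2^*$, and since $\nabla(u^2)\in L^2(\R^N)$ the Sobolev inequality gives $u\in L^{2\cdot 2^*}(\R^N)$, so every term is finite and the Nehari identity follows by a density argument. The Pohozaev identity is more delicate, since differentiating $\sigma\mapsto I_\lambda(u_\sigma)$ at $\sigma=1$ and identifying the derivative with the equation tested against $x\cdot\nabla u$ presupposes sufficient regularity and decay of $u$. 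I would establish it either by invoking the regularity theory available for \eqref{eq1.2}, or, if needed, by first applying the standard change of unknown that turns the quasi-linear problem into a semi-linear one, for which the Pohozaev identity is classical.
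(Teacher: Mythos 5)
Your proposal is correct and follows essentially the same route as the paper: the paper likewise combines the Pohozaev identity \eqref{poho2.1} with the Nehari identity \eqref{poho2.2} to eliminate the $\|u\|_{p+1}^{p+1}$ term, arriving at an identity (their \eqref{3}) that is exactly your displayed relation rearranged, and concludes by the same sign analysis of the coefficients of $\|\nabla u\|_2^2$ and $\int|u|^2|\nabla u|^2\,dx$. Your extra remarks — that $V=0$ alone suffices at the critical exponent, and on justifying the two identities (the paper simply cites \cite[Lemma 3.1]{CJS} for Pohozaev) — are sound but do not change the argument.
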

\begin{proof}
Let $(u, \lambda)$ solves \eqref{eq1.2}. Then it can be proved that $(u, \lambda)$ satisfies the following Pohozaev identity
\begin{equation}\label{poho2.1}
\frac{N-2}{N} \left(\frac{1}{2}\|\nabla u\|_2^2+\int_{\R^N}|u|^2|\nabla u|^2dx  \right) -\frac{\lambda}{2}\|u\|_2^2 = \frac{1}{p+1}\|u\|_{p+1}^{p+1}.
\end{equation}
see \cite[Lemma 3.1]{CJS}. In addition, testing  \eqref{eq1.2} by $u$, we have
\begin{equation}\label{poho2.2}
||\nabla u||_2^2 + 4 \int_{\R^N}|u|^2 |\nabla u|^2 dx - \lambda ||u||_2^2 - ||u||_{p+1}^{p+1}=0.
\end{equation}
By a direct calculation, it follows from \eqref{poho2.1} and \eqref{poho2.2} that
\begin{equation}\label{3}
\lambda \|u\|_2^2 = \frac{(N-2)p-(N+2)}{N(p-1)}\|\nabla u\|_2^2 + 2\frac{(N-2)p-(3N+2)}{N(p-1)}\int_{\R^N} |u|^2 |\nabla u|^2dx.
\end{equation}
Under our assumptions, (\ref{3}) implies that if $\lambda  \geq 0$ necessarily $u=0$.
\end{proof}

To prove our second non existence result we need the following Lemma:
\begin{lem}(\cite[Lemma A.2]{Ik})\label{ikoma}
Suppose $r \in (1, \frac{N}{N-2}]$, $N \geq 3$ and let $v \in L^r(\R^N)$ be a smooth non-negative function which satisfies $- \Delta v \geq 0$ in $\R^N$. Then $v \equiv 0$ holds.
\end{lem}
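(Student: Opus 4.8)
The plan is to show that a smooth, non-negative, superharmonic function lying in $L^r(\R^N)$ with $r$ at or below the critical value $\frac{N}{N-2}$ must in fact be harmonic, and then that a non-negative harmonic function in $L^r(\R^N)$ vanishes identically. The key mechanism is a cutoff/integration-by-parts argument that transfers both derivatives onto the test function, so that only the $L^r$-information on $v$ is used.

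First I would fix a radial cutoff $\varphi \in C_0^{\infty}(\R^N,\R)$ with $0 \le \varphi \le 1$, $\varphi \equiv 1$ on $B_1(0)$ and $\mathrm{supp}\,\varphi \subset B_2(0)$, and set $\psi_R(x) := \varphi(x/R)$. Since $v$ is smooth, $-\Delta v \ge 0$ is a continuous non-negative function, and testing against the compactly supported $\psi_R$ and integrating by parts twice (no boundary terms) gives
\[ \int_{\R^N} (-\Delta v)\,\psi_R\,dx = -\int_{\R^N} v\,\Delta\psi_R\,dx. \]
Because $\Delta\psi_R(x) = R^{-2}(\Delta\varphi)(x/R)$ is bounded by $CR^{-2}$ and supported in the annulus $A_R := B_{2R}(0)\setminus B_R(0)$, Hölder's inequality yields
\[ \Big| \int_{\R^N} v\,\Delta\psi_R\,dx \Big| \le C R^{-2} \Big(\int_{A_R} v^r\,dx\Big)^{1/r} |A_R|^{1-1/r} \le C\,R^{\,N-2-N/r}\Big(\int_{A_R} v^r\,dx\Big)^{1/r}, \]
using $|A_R|^{1-1/r} = C R^{N(1-1/r)}$. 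The exponent $N-2-N/r$ is $\le 0$ precisely because $r \le \frac{N}{N-2}$, which is exactly where the hypothesis enters.

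The subcritical range $r < \frac{N}{N-2}$ I expect to be routine: the exponent is strictly negative while $\big(\int_{A_R}v^r\big)^{1/r}\le \|v\|_{r}$ stays bounded, so the right-hand side tends to $0$. The \emph{main obstacle} is the endpoint $r = \frac{N}{N-2}$, where the exponent is exactly $0$ and the gain from the power of $R$ disappears; there I would instead exploit that, since $v\in L^r(\R^N)$, the tail $\int_{A_R} v^r\,dx \to 0$ as $R\to\infty$, so the bound still vanishes. In either case $\int_{\R^N}(-\Delta v)\,\psi_R\,dx \to 0$. Since $\psi_R \to 1$ pointwise and $(-\Delta v)\psi_R \ge 0$, Fatou's lemma gives $\int_{\R^N}(-\Delta v)\,dx \le 0$; combined with $-\Delta v \ge 0$ this forces $-\Delta v \equiv 0$, so $v$ is harmonic.

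To conclude I would invoke the mean value property for the now-harmonic $v$: for every $x_0\in\R^N$ and every $R>0$,
\[ v(x_0) = \frac{1}{|B_R|}\int_{B_R(x_0)} v\,dy \le \|v\|_{r}\,|B_R|^{-1/r} \longrightarrow 0 \quad \text{as } R\to\infty, \]
by Hölder's inequality and $|B_R| = c_N R^N \to \infty$. Hence $v(x_0)=0$ for every $x_0$, i.e. $v \equiv 0$. All steps besides the critical-exponent estimate are standard, so the crux of the proof is genuinely the borderline case $r=\frac{N}{N-2}$ treated above.
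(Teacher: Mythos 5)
Your argument is correct and complete; note, however, that the paper itself offers no proof of this lemma --- it is imported verbatim from the cited reference (Ikoma, Lemma A.2), so the comparison is really with the standard proof there. Your route is a test-function (nonlinear capacity) argument: you first upgrade superharmonicity to harmonicity by showing $\int_{\R^N}(-\Delta v)\,dx=0$ via the scaled cutoffs, with the correct treatment of the borderline exponent $r=\tfrac{N}{N-2}$ through the vanishing tail $\int_{A_R}v^r\,dx\to 0$, and then kill the harmonic function with the mean value property and H\"older. The classical proof of this Liouville statement instead argues by contradiction from a pointwise lower bound: if $v\not\equiv 0$ then $v>0$ by the strong minimum principle, and comparison with the harmonic function $m_{R_0}(R_0/|x|)^{N-2}$ on exterior annuli gives $v(x)\geq c\,|x|^{2-N}$ for large $|x|$, whence $\int_{\R^N}v^r\,dx\gtrsim\int_{R_0}^{\infty}s^{N-1-(N-2)r}\,ds=\infty$ exactly when $r\leq\tfrac{N}{N-2}$, contradicting $v\in L^r$. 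The two approaches use the hypothesis $r\leq\tfrac{N}{N-2}$ in dual ways (decay of the cutoff estimate versus non-integrability of the fundamental solution); yours has the mild advantage of not invoking the strong minimum principle or an exterior comparison argument, at the cost of a two-stage structure (harmonicity first, then vanishing). Either way the lemma stands, so your proposal is a valid self-contained substitute for the citation.
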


Using this Liouville's result we get

\begin{lem}\label{lem2.22}
Assume that $p \in (\frac{N+2}{N-2}, \frac{3N+2}{N-2})$ with $N =3, 4$, and that $\lambda\geq 0$. Then \eqref{eq1.2} has no non-negative solution.
\end{lem}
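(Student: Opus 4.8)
The plan is to reduce the problem, for non-negative solutions, to a semilinear differential inequality of the form $-\Delta v \geq 0$ and then invoke the Liouville-type result of Lemma \ref{ikoma}. The point is that in the range $p \in (\frac{N+2}{N-2}, \frac{3N+2}{N-2})$ the identity \eqref{3} no longer forces $u=0$ (the coefficient of $\|\nabla u\|_2^2$ is now positive while that of the quasi-linear term is negative), so the sign argument of Lemma \ref{lem2.1} is unavailable and a genuinely different mechanism is needed. First I would record that any non-negative solution $u \in \mathcal{X}$ is smooth, by the standard elliptic regularity available in the subcritical range $p < \frac{3N+2}{N-2}$ (for instance after the usual change of variable turning $(P_\lambda)$ into a semilinear equation), so that the pointwise manipulations below are justified. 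Expanding $\Delta(u^2) = 2u\Delta u + 2|\nabla u|^2$ and using $u \geq 0$, the equation $(P_\lambda)$ rewrites as $-(1 + 2u^2)\Delta u - 2u|\nabla u|^2 = \lambda u + u^{p}$.

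Next I introduce the change of variable $v := G(u)$, where $G(t) := \int_0^t \sqrt{1 + 2s^2}\,ds$ (equivalently $u = f(v)$ with $f'(t) = (1 + 2f(t)^2)^{-1/2}$ and $f(0)=0$). Since $G'(t) = \sqrt{1+2t^2}$, a direct computation gives $\Delta v = \sqrt{1+2u^2}\,\Delta u + \tfrac{2u}{\sqrt{1+2u^2}}|\nabla u|^2$; dividing the rewritten equation by $\sqrt{1+2u^2}$ therefore yields exactly
\[
-\Delta v = \frac{\lambda u + u^{p}}{\sqrt{1 + 2u^{2}}} \geq 0 ,
\]
the inequality coming from $\lambda \geq 0$ and $u \geq 0$. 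Thus $v$ is a smooth, non-negative, superharmonic function on $\R^N$, with $v \equiv 0$ equivalent to $u \equiv 0$ because $G$ is strictly increasing with $G(0)=0$.

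It then remains to verify the integrability hypothesis $v \in L^r(\R^N)$ for some $r \in (1, \frac{N}{N-2}]$. Since $G(t) \leq C(t + t^2)$ for $t \geq 0$, I get $|v|^r \leq C(u^r + u^{2r})$ pointwise and hence $\|v\|_r^r \leq C(\|u\|_r^r + \|u\|_{2r}^{2r})$. As $u \in \mathcal{X} \subset W^{1,2}(\R^N)$, the Sobolev embedding gives $u \in L^2 \cap L^{2^*}$ with $2^* = \frac{2N}{N-2}$, so $u \in L^q$ for every $q \in [2, 2^*]$ by interpolation. Choosing $r = \frac{N}{N-2}$, both exponents $r$ and $2r = 2^*$ land in $[2, 2^*]$ precisely when $\frac{N}{N-2} \geq 2$, i.e. $N \leq 4$; combined with $N \geq 3$ (required in Lemma \ref{ikoma}) this pins down $N \in \{3,4\}$ and accounts for the dimensional restriction in the statement. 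With this $r$ both terms on the right are finite, so $v \in L^{N/(N-2)}(\R^N)$, and Lemma \ref{ikoma} forces $v \equiv 0$, whence $u \equiv 0$.

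The main obstacle is exactly this borderline integrability step. The delicate balance is that in the decay region $v$ behaves like $u$ (forcing $r \geq 2$ to use only $u \in L^2$) while in its concentration region $v$ behaves like $u^2$ (forcing $2r \leq 2^*$, i.e. $r \leq \frac{N}{N-2}$); these two constraints are compatible only for $N = 3, 4$, which is why the statement excludes higher dimensions. Besides this, I would take care to confirm that the regularity of $u$ is enough to apply the pointwise Liouville result and that the non-negativity $v = G(u) \geq 0$ used in Lemma \ref{ikoma} is immediate from $u \geq 0$.
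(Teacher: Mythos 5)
Your proof is correct and follows essentially the same route as the paper: the dual change of variable $v=f^{-1}(u)$ turns $(P_{\lambda})$ into the superharmonic inequality $-\Delta v\geq 0$, and Lemma~\ref{ikoma} finishes the argument. The only cosmetic difference is in the integrability check --- the paper gets it from the cited equivalence giving $v\in H^1(\R^N)$, hence $v\in L^2$ with $2\leq \frac{N}{N-2}$ for $N=3,4$, while you verify $v\in L^{N/(N-2)}$ directly from $G(t)\leq C(t+t^2)$ and $u\in L^2\cap L^{2^*}$ --- and both versions hit the same dimensional restriction.
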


\begin{proof}
First observe that (e.g. \cite{CJ, LWW1}) the search of non-negative solutions of \eqref{eq1.2} is equivalent to the search of non-negative solutions of
\begin{equation}\label{semi-eq}
- \Delta v = \frac{1}{\sqrt{1+2f^2(v)}} \Big(|f(v)|^{p-1}f(v) + \lambda f(v) \Big), \quad v \in H^1(\R^N),
\tag{$E_{\lambda}$}
\end{equation}
where $f$ is the unique solution of the Cauchy problem
\begin{eqnarray*}
\left\{
\begin{array}{l}
f'(s)=\frac{1}{\sqrt{1+2f^2(s)}},\\
f(0)=0
\end{array}
\right.
\end{eqnarray*}
on $[0, + \infty[$ and $f(t) = - f(-t)$ on $] - \infty, 0].$
Indeed, by \cite{CJ} it is known that any solution $v\in H^1(\R^N)$ of \eqref{semi-eq} gives arise to a solution $u:=f(v)$ of \eqref{eq1.2}. Reciprocally, from \cite[Lemmas 2.6, 2.8, 2.10]{AT} we know that for any solution $u$ of \eqref{eq1.2}, $v:=f^{-1}(u)$ is a solution of \eqref{semi-eq}. Clearly also a non-negative solution of  \eqref{eq1.2} is transformed into a non-negative solution of \eqref{semi-eq}. Now since \eqref{semi-eq} implies that
$ - \Delta v \geq 0,$ in $ \R^N,$
the conclusion of the lemma follows directly from Lemma \ref{ikoma}.
\end{proof}

Before ending this section and to enlighten the restriction we have to impose when $N \geq 5$ note that

\begin{lem}\label{lem2}
Assume that $p \in (\frac{N+2}{N-2}, \frac{3N+2}{N-2})$, $N  \geq 5$, and that $\lambda=0$. Then \eqref{eq1.2} has positive non trivial radially symmetric solution in $L^2(\R^N)$.
\end{lem}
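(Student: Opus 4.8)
The plan is to exploit the change of unknown already used in the proof of Lemma \ref{lem2.22}: setting $u = f(v)$, the task of finding a positive, radially symmetric solution of \eqref{eq1.2} with $\lambda = 0$ lying in $L^2(\R^N)$ is equivalent to finding a positive, radial, $H^1(\R^N)$ solution of \eqref{semi-eq} with $\lambda = 0$, namely
\begin{equation*}
-\Delta v = g(v) := \frac{|f(v)|^{p-1}f(v)}{\sqrt{1 + 2 f^2(v)}}, \qquad v \in H^1(\R^N).
\end{equation*}
I would first record the elementary properties of $f$: it is odd, smooth, increasing, with $0 \le f(s) \le s$ for $s \ge 0$, $f(s)\sim s$ as $s\to 0$ and $f(s)\sim 2^{1/4}s^{1/2}$ as $s\to+\infty$. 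Consequently $g$ is odd and continuous, with $g(s)\sim s^{p}$ as $s\to 0^+$ and $g(s)\sim C\, s^{(p-1)/2}$ as $s\to +\infty$.

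Next I would verify that $g$ fits the Berestycki--Lions framework in the zero-mass case. Since $p>1$ we have $g(s)/s \to 0$ as $s\to 0$, so the problem is genuinely of zero mass. The upper bound $p < \frac{3N+2}{N-2}$ gives $(p-1)/2 < \frac{N+2}{N-2} = 2^*-1$, i.e. strict subcriticality of $g$ at infinity, whereas the lower bound $p > \frac{N+2}{N-2} = 2^*-1$ makes $g$ supercritical at the origin (equivalently $p+1 > 2^*$), so that the primitive $G(s)=\int_0^s g$, which behaves like $s^{p+1}$ near $0$, satisfies $G(v)\lesssim v^{2^*}$ for $v$ small and is therefore integrable along $D^{1,2}(\R^N)$ functions. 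Thus $g$ crosses the critical power $2^*-1$ exactly once, which is precisely the configuration for which the zero-mass theory (constrained minimization of $\int_{\R^N}|\nabla v|^2\,dx$ on $\{\int_{\R^N} G(v)\,dx = 1\}$, followed by Schwarz symmetrization and rescaling) produces a positive, radially symmetric, decreasing solution $v\in D^{1,2}(\R^N)$ of the displayed equation.

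The remaining, and main, point is to upgrade $v\in D^{1,2}$ to $v\in H^1$, which is exactly where the restriction $N\ge 5$ enters and which I expect to be the crux. I would argue that $v(x)\to 0$ as $|x|\to\infty$ and then invoke the decay theory for $-\Delta v = g(v)$: because $g(v)\sim v^{p}$ near $0$ with $p > \frac{N}{N-2}$ (a consequence of $p>\frac{N+2}{N-2}$), the nonlinear term is a lower-order perturbation at infinity and the solution inherits the harmonic decay $v(x) = O(|x|^{-(N-2)})$. Since $\int_{|x|\ge 1}|x|^{-2(N-2)}\,dx < \infty$ exactly when $N\ge 5$, this yields $v\in L^2(\R^N)$ and hence $v\in H^1(\R^N)$. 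Finally, transforming back, $u := f(v)$ is a positive, radially symmetric solution of \eqref{eq1.2} with $\lambda = 0$ by the equivalence recalled in the proof of Lemma \ref{lem2.22}; moreover $0 \le f(v)\le v$ forces $u \in L^2(\R^N)$, and the same decay gives $\int_{\R^N}|u|^2|\nabla u|^2\,dx < \infty$, so $u\in\mathcal{X}$, completing the proof. The one step requiring genuine care is the decay estimate—and the integrability it provides—which is where I anticipate an external decay lemma (of the type provided by T.\ Watanabe) would be used.
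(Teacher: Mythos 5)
Your proposal is correct and follows essentially the same route as the paper: pass to the dual equation \eqref{semi-eq} with $\lambda=0$, take the positive radially symmetric decreasing $\mathcal{D}^{1,2}(\R^N)$ solution of the zero-mass problem (the paper cites \cite[Proposition 4.1]{ASW} for this), establish the harmonic decay $v(r)\sim C\,r^{-(N-2)}$ via the Watanabe decay lemma (Lemma \ref{lm-decay}, proved in the Appendix), and observe that $\int_1^\infty r^{N-1}r^{-2(N-2)}\,dr<\infty$ exactly when $N\geq 5$ before transforming back through $u=f(v)$. You correctly anticipated both the crux (the decay estimate) and the role of the external lemma.
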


Let us observe that a positive radially symmetric decreasing solution of \eqref{semi-eq} with $\lambda =0$ exists in $\mathcal{D}^{1,2}(\R^N)$ if $p> \frac{N+2}{N-2}$. This result is given in \cite[Proposition 4.1]{ASW}. To prove the lemma we need to show that, under our assumptions, it does belong to $L^2(\R^N)$. Indeed, since $f(s)/s \to 1$ as $s \to 0$ a solution of \eqref{eq1.2} belongs to $L^2(\R^N)$ if and only if the corresponding solution of \eqref{semi-eq} also belongs to $L^2(\R^N)$. Also a positive radially symmetric decreasing solution of \eqref{eq1.2} corresponds to a positive radially symmetric decreasing solution of \eqref{semi-eq}. The following result whose proof will be postponed to the Appendix is due to T. Watanabe \cite{Wa}. It provides the decay estimates we need to prove Lemma \ref{lem2}.

\begin{lem}\label{lm-decay}
Assume that $p\in (\frac{N+2}{N-2}, \frac{3N+2}{N-2}), N\geq 3$.
Let $v_0(r)\in \mathcal{D}^{1,2}(\R^N)$ be a $C^2$, positive and radially symmetric decreasing solution of \eqref{semi-eq}
with $\lambda=0$, namely
\begin{eqnarray}\label{semi-eq-0}
- \Delta v_0 = f(v_0)^{p}f'(v_0), \mbox{ in } \R^N.
\end{eqnarray}
Then there exists a positive constant $C_{\infty}>0$ such that for sufficiently large $r>0$, $v_0$ satisfies
\begin{eqnarray}\label{decayp2}
C_{\infty}K(r)\Big(1-O(r^{-2})\Big)\leq v_0(r)\leq C_{\infty}K(r).
\end{eqnarray}
\begin{eqnarray}\label{decayp3}
C_{\infty}K'(r) \leq v_0'(r)\leq C_{\infty}K'(r)\Big(1-O(r^{-2})\Big).
\end{eqnarray}
Here $$K(r):=\frac{1}{(N-2)|S^{N-1}|r^{N-2}}, C>0,$$
is the fundamental solution of $-\Delta$ on $\R^N$.
\end{lem}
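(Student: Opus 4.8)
The plan is to reduce everything to the radial ODE satisfied by $v_0$ and to read off the decay from two successive integrations, the optimal rate being forced by the integrability of the right-hand side. Writing $g(s):=f(s)^pf'(s)\ge 0$ and using that $v_0$ is a radially symmetric decreasing $C^2$ solution, equation \eqref{semi-eq-0} becomes $-(r^{N-1}v_0')'=r^{N-1}g(v_0)$, which integrates (using $r^{N-1}v_0'(r)\to 0$ as $r\to 0$) to
\[
-r^{N-1}v_0'(r)=M(r):=\int_0^r s^{N-1}g(v_0(s))\,ds,\qquad v_0(r)=\int_r^{\infty}s^{-(N-1)}M(s)\,ds.
\]
Since $f$ is odd with $f(s)=s+O(s^3)$ and $f'(s)=1+O(s^2)$ near $0$, one has $g(s)=s^p+O(s^{p+2})$ as $s\to 0^+$; because $v_0(r)\to 0$, this is the only asymptotic of $g$ that enters the far-field analysis.

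First I would establish the sharp preliminary decay $v_0(r)\le Cr^{-(N-2)}$ by a bootstrap on these identities. Since $v_0\in\mathcal{D}^{1,2}(\R^N)$ is radial and decreasing, $v_0\in L^{2^*}(\R^N)$ yields the crude bound $v_0(r)\le Cr^{-(N-2)/2}$. The bootstrap step is: if $v_0(r)\le Cr^{-\alpha}$ with $2<p\alpha<N$, then $g(v_0(r))\le Cr^{-p\alpha}$, hence $M(r)\le Cr^{N-p\alpha}$ and, after the second integration, $v_0(r)\le Cr^{-(p\alpha-2)}$. Thus the decay exponent is improved by the map $\alpha\mapsto p\alpha-2$, whose fixed point $2/(p-1)$ is repelling. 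The starting exponent satisfies $\tfrac{N-2}{2}>\tfrac{2}{p-1}$ \emph{precisely} when $p>\tfrac{N+2}{N-2}$, so under our hypothesis the iterates increase to $+\infty$ and after finitely many steps reach $p\alpha>N$ (the borderline $p\alpha=N$ being absorbed by one logarithmic step). At that point $M:=\int_0^{\infty}s^{N-1}g(v_0(s))\,ds<\infty$ and $v_0(r)\le\tfrac{M}{N-2}r^{-(N-2)}$. This bootstrap, and the fact that its convergence is tied \emph{exactly} to $p>\frac{N+2}{N-2}$, is the main obstacle; everything afterwards is bookkeeping.

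With $M<\infty$ the leading term and both one-sided bounds drop out immediately. Setting $C_{\infty}:=|S^{N-1}|\,M=\int_{\R^N}g(v_0)\,dy>0$, the monotonicity $M(s)\uparrow M$ gives at once
\[
v_0(r)=\int_r^{\infty}s^{-(N-1)}M(s)\,ds\le\frac{M}{N-2}\,r^{-(N-2)}=C_{\infty}K(r),
\]
together with $-v_0'(r)=r^{-(N-1)}M(r)\le Mr^{-(N-1)}=-C_{\infty}K'(r)$, i.e. the upper bound for $v_0$ and the lower bound $v_0'(r)\ge C_{\infty}K'(r)$, and in particular $v_0(r)\sim C_\infty K(r)$.

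Finally I would quantify the defect $M-M(r)$ to produce the $O(r^{-2})$ corrections. Feeding the leading asymptotic $v_0(r)\sim\tfrac{M}{N-2}r^{-(N-2)}$ back into $g(v_0(s))\sim(\tfrac{M}{N-2})^ps^{-p(N-2)}$ gives
\[
M-M(r)=\int_r^{\infty}s^{N-1}g(v_0(s))\,ds=O\!\left(r^{-(p(N-2)-N)}\right)=O(r^{-\delta}),\qquad\delta:=p(N-2)-N,
\]
and $\delta>2$ is once more equivalent to $p>\tfrac{N+2}{N-2}$. Substituting $M(s)=M-(M-M(s))$ into the two integral identities yields
\[
v_0(r)=C_{\infty}K(r)-\int_r^{\infty}(M-M(s))s^{-(N-1)}\,ds,\qquad -v_0'(r)=-C_{\infty}K'(r)-(M-M(r))r^{-(N-1)},
\]
where both correction terms are nonnegative and of size $O(r^{-\delta})K(r)$, respectively $O(r^{-\delta})|K'(r)|$. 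Since $\delta>2$ these are $O(r^{-2})$ relative errors of the correct sign, giving $C_{\infty}K(r)(1-O(r^{-2}))\le v_0(r)\le C_{\infty}K(r)$ and $C_{\infty}K'(r)\le v_0'(r)\le C_{\infty}K'(r)(1-O(r^{-2}))$, as claimed.
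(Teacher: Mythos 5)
Your argument is correct, and it is a genuinely different route from the one in the paper (which follows Flucher--M\"uller). You integrate the radial equation twice, obtaining $-r^{N-1}v_0'(r)=M(r)=\int_0^r s^{N-1}f(v_0)^pf'(v_0)\,ds$ and $v_0(r)=\int_r^\infty s^{-(N-1)}M(s)\,ds$, and you run a bootstrap on the decay exponent of $v_0$ itself, starting from the radial-lemma bound $v_0(r)\le Cr^{-(N-2)/2}$ for decreasing functions in $L^{2^*}$; the hypothesis $p>\frac{N+2}{N-2}$ enters exactly as the condition that the starting exponent lies above the repelling fixed point $2/(p-1)$ of $\alpha\mapsto p\alpha-2$, and again as $\delta=p(N-2)-N>2$ for the error term. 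The paper instead works with the energy quantity $a(r)=r^N\bigl(|v_0'|^2+\tfrac{2}{p+1}f(v_0)^{p+1}\bigr)$, proves $a(r)\to 0$, and runs Giaquinta's iteration lemma on $A(r)=\sup_{R\ge r}a(R)$ to reach $A(r)\le Cr^{-(N-2)}$ before extracting the constant. The two limiting constants --- your $C_\infty=\int_{\R^N}f(v_0)^pf'(v_0)\,dy$ and the paper's $(C_\infty)^2=\frac{4(N-1)}{p+1}\int_{\R^N}|x|^{N-2}f(v_0)^{p+1}\,dx$ --- must coincide, since both equal $(N-2)|S^{N-1}|\lim_{r\to\infty}r^{N-2}v_0(r)$ (this is the radial Pohozaev identity in disguise). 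What your approach buys: it is more elementary, it isolates the role of the lower bound on $p$ very cleanly, and it does not actually use the upper bound $p<\frac{3N+2}{N-2}$ (you only need the behaviour of $f(s)^pf'(s)\sim s^p$ near $s=0$, whereas the paper needs the global bound $|f(s)|^{p+1}\le C|s|^{2N/(N-2)}$ to start its argument). What the paper's approach buys is robustness: the energy-monotonicity method does not rely on having an explicit first integral of the ODE and transfers to more general nonlinearities. Your error term comes out as $O(r^{-\delta})$ with $\delta>2$, which is slightly sharper than, and in particular consistent with, the stated $O(r^{-2})$.
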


\begin{proof}[Proof of Lemma \ref{lem2}]
For a solution $v \in \mathcal{D}^{1,2}(\R^N)$ to belong to $L^2(\R^N)$ the decay of $v$ at infinity must be such that
\begin{eqnarray}\label{L2}
\int_0^{\infty}r^{N-1}v^2(r)dr<\infty.
\end{eqnarray}
Note from \eqref{decayp2} in Lemma \ref{lm-decay} that
\begin{eqnarray*}
\int_0^{\infty}r^{N-1}v^2(r)dr<\infty &\Longleftrightarrow& \int_0^{\infty}r^{N-1}\frac{1}{r^{2N-4}}dr<\infty \\
&\Longleftrightarrow& \int_0^{\infty}\frac{1}{r^{N-3}}dr<\infty.
\end{eqnarray*}
Then \eqref{L2} is true if and only if $N-3>1$, namely if and only if $N\geq 5$. This ends the proof.
\end{proof}

\section{Perturbation of the functional}
In this section, to overcome the non-differentiability of the functional $J$, we apply the perturbation approach introduced in \cite{LLW}. First, we show that there exists a $c_0\in (0, c(p,N))$ such that, for each $c\in (c_0, \infty)$ the functional $J_{\mu}$ has a mountain pass geometry on $\Sigma_c$ when $\mu>0$ is sufficiently small.

\begin{lem}\label{lem1}[Mountain Pass geometry]
Assume that $p\in (1+\frac{4}{N}, 3+\frac{4}{N})$, $N \geq 1$. Then there exists a $c_0 \in (0, c(p,N))$ such that for any fixed $c \in [c_0, \infty)$ taking $\mu_0 >0$ small enough the functional $J_{\mu}$ has, for $\mu \in (0, \mu_0)$ a Mountain Pass geometry on the constraint $\Sigma_c$. Precisely there exist  $(u_0, u_1)\in \Sigma_c \times \Sigma_c$  both Schwarz symmetric, such that
\begin{eqnarray}\label{mp}
\gamma_{\mu}(c)=\inf_{g\in \Gamma_c}\max_{t\in [0,1]}J_{\mu}(g(t))> \max \{J_{\mu}(u_0), J_{\mu}(u_1)\}
\end{eqnarray}
where
$$\Gamma_c=\{g\in C([0,1], \Sigma_c):\ g(0)=u_0 ,\ g(1)=u_1 \}.$$
\end{lem}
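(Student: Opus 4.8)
The plan is to exhibit the mountain-pass geometry along the $L^2$-preserving scaling $u^t(x)=t^{N/2}u(tx)$ and then to promote it to a genuine barrier by separating the two end points with a level set of the continuous map $u\mapsto\|\nabla u\|_2^2$. First I would record the exact scaling law: a change of variables gives
$$J_\mu(u^t)=\frac{\mu}{4}t^{N+4}\|\nabla u\|_4^4+\frac12 t^2\|\nabla u\|_2^2+t^{N+2}\int_{\R^N}|u|^2|\nabla u|^2\,dx-\frac{1}{p+1}t^{\sigma}\|u\|_{p+1}^{p+1},$$
with $\sigma:=N(p-1)/2$. The hypothesis $p\in(1+\tfrac4N,3+\tfrac4N)$ is precisely $2<\sigma<N+2$, so the focusing term sits at an intermediate power between the lowest (kinetic) power $t^2$ and the coercive powers $t^{N+2},t^{N+4}$. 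Note that $u^t\in\Sigma_c$ whenever $u\in\Sigma_c$, and that $u^t$ is Schwarz symmetric if $u$ is.

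For the barrier I would use the Gagliardo--Nirenberg inequality in the form $\|u\|_{p+1}^{p+1}\le C\,c^{\beta}\|\nabla u\|_2^{\sigma}$ on $\Sigma_c$ and, discarding the nonnegative $\mu$-term and quasi-linear term, obtain $J_\mu(u)\ge\phi(\|\nabla u\|_2^2)$ where $\phi(s)=\tfrac12 s-C' s^{\sigma/2}$ with $\sigma/2>1$. Thus $\phi$ is positive on an interval $(0,s^*)$ and attains a strictly positive maximum; I fix $\rho$ at the maximizer and set $\delta:=\max\phi>0$. This shows, uniformly in $\mu\in(0,1]$, that $J_\mu(u)\ge\delta$ on the ``sphere'' $\{u\in\Sigma_c:\|\nabla u\|_2^2=\rho\}$. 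Since $u\mapsto\|\nabla u\|_2^2$ is continuous on $X$, every path $g\in\Gamma_c$ whose end points lie on opposite sides of $\rho$ must meet this sphere, whence $\gamma_\mu(c)\ge\delta$.

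It then remains to construct the two end points, both Schwarz symmetric and of energy strictly below $\delta$. For $u_0$ I take a Schwarz-symmetric profile of very small gradient (for instance $w^{s_0}$ with $s_0$ small), so that $\|\nabla u_0\|_2^2<\rho$ and $J_\mu(u_0)\to0<\delta$. For $u_1$ I need a point on the far side $\|\nabla u_1\|_2^2>\rho$ with $J(u_1)<\delta$, and this is exactly where the threshold $c_0<c(p,N)$ enters. When $c=c(p,N)$, Lemma \ref{lm0}(3)\,ii) supplies a minimizer $\bar v$ with $J(\bar v)=m(c(p,N))=0<\delta$; the lower bound $\phi(\|\nabla\bar v\|_2^2)\le J(\bar v)=0$ forces $\|\nabla\bar v\|_2^2\ge s^*>\rho$, so $\bar v$ is genuinely on the far side. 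Mass-rescaling $\bar v$ (and, when $c>c(p,N)$, Schwarz-rearranging a negative-energy function, which only lowers $J$) together with continuity in $c$ yields a threshold $c_0\in(0,c(p,N))$ such that for every $c\in(c_0,\infty)$ there is a Schwarz-symmetric $u_1\in\Sigma_c$ with $\|\nabla u_1\|_2^2>\rho$ and $J(u_1)<\delta$. Finally, once $c$ is fixed the functions $u_0,u_1$ are fixed, so the extra term $\tfrac{\mu}{4}\|\nabla u_i\|_4^4$ is a controlled quantity; choosing $\mu_0$ small keeps $J_\mu(u_0),J_\mu(u_1)<\delta\le\gamma_\mu(c)$ for all $\mu\in(0,\mu_0)$, which is the desired strict inequality \eqref{mp}.

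I expect the main obstacle to be the third step: producing the far-side end point $u_1$ below the barrier and, in particular, proving that the threshold $c_0$ can be taken strictly below $c(p,N)$. This is the only place where the fine structure of Lemma \ref{lm0}(3) --- the value $c(p,N)$, the attainment of $m(c(p,N))=0$, and the non-attainment for $c<c(p,N)$ --- is genuinely used, and it calls for a continuity/rescaling argument in $c$ rather than a one-line estimate. By contrast, the Gagliardo--Nirenberg barrier and the topological crossing are routine, and the uniformity in $\mu$ is immediate because both end points are fixed functions.
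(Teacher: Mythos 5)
Your overall architecture --- a barrier on a level set of a gradient-type functional, an endpoint $u_0$ with small gradient obtained by the $L^2$-preserving scaling, an endpoint $u_1$ built from the global minimizers of Lemma \ref{lm0}(3) (the minimizer at $c(p,N)$ mass-rescaled by $\sqrt{t}$ to reach $c<c(p,N)$, the negative-energy minimizer rearranged when $c>c(p,N)$), and uniformity in $\mu$ because the endpoints are fixed functions --- is exactly the paper's. But there is one genuine gap: your barrier is built on the level sets of $u\mapsto\|\nabla u\|_2^2$ via the Gagliardo--Nirenberg inequality $\|u\|_{p+1}^{p+1}\le C c^{\beta}\|\nabla u\|_2^{\sigma}$, and this inequality is simply false on part of the admissible range. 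It requires $p+1\le 2^*=\frac{2N}{N-2}$, i.e.\ $p\le\frac{N+2}{N-2}$, whereas the lemma covers $p$ up to $3+\frac{4}{N}$, which exceeds $\frac{N+2}{N-2}$ as soon as $N\ge 4$ (for $N=4$, $p\in(3,4)$ already gives $p+1>2^*=4$). For such $p$ there is no bound of $\|u\|_{p+1}$ by $\|\nabla u\|_2$ and $\|u\|_2$ alone, since $H^1(\R^N)\not\subset L^{p+1}(\R^N)$; so your function $\phi$ does not minorize $J_\mu$ and the barrier collapses.

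The fix is the one the paper uses in Lemma \ref{estimates}: take the ``radius'' to be $\int_{\R^N}(1+|u|^2)|\nabla u|^2\,dx$ rather than $\|\nabla u\|_2^2$, and split the estimate into two regimes. For $p<\frac{N+2}{N-2}$ your Gagliardo--Nirenberg step goes through after enlarging $\|\nabla u\|_2^2$ to $\int(1+|u|^2)|\nabla u|^2$; for $p\in[\frac{N+2}{N-2},\frac{3N+2}{N-2})$ one applies the Sobolev inequality to $|u|^{\alpha+1}$ with $\frac{p+1}{\alpha+1}=\frac{2N}{N-2}$, $\alpha\in[0,1)$, which yields $\|u\|_{p+1}^{p+1}\le K\bigl(\int(1+|u|^2)|\nabla u|^2\,dx\bigr)^{\frac{N}{N-2}}$. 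Both exponents exceed $1$, so the barrier $J_\mu\ge\frac14 k$ on $C_k$ for small $k$ survives, and your crossing and endpoint arguments carry over verbatim with this modified functional (your deduction that the minimizer at $c(p,N)$ lies on the far side of the barrier works equally well with it). Two smaller points you should also address: the endpoint $u_1$ must lie in $X=W^{1,4}\cap W^{1,2}(\R^N)$ for $J_\mu(u_1)$ to be finite, which requires the regularity/decay of the constrained minimizers (the paper cites \cite[Lemma 4.6]{CJS} and \cite[Lemma 5.10]{LWW2}); and the barrier constants must be chosen uniformly in $c$ for $c\le c(p,N)$ so that a single threshold $c_0$ works, which holds because the mass only enters through $c^{\beta}\le c(p,N)^{\beta}$.
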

The proof of Lemma \ref{lem1} relies on the following estimates which holds under more general assumptions than those of Theorem \ref{mainresult1}.
\vskip1mm

\begin{lem}\label{estimates}
Assume that $p\in (1+\frac{4}{N},  \frac{3N+2}{N-2})$ if $N\geq 3$ and $p\in (1+\frac{4}{N}, \infty)$ if $N=1,2$. Then setting for $k >0$,
$$ C_k:=\Big\{u \in \Sigma_c: \int_{\R^N}(1+|u|^2)|\nabla u|^2dx = k\Big\}$$
there exists a $k_0 >0$ sufficiently small such that for all $k \in (0, k_0]$ and all $\mu >0$
\begin{equation}\label{lJ}
J_{\mu}(u)\geq \frac{1}{4}k>0,\ \mbox{ for all } u\in C_{k};
\end{equation}
\begin{equation}\label{lQ}
Q_{\mu}(u)\geq \frac{1}{4}k>0,\ \mbox{ for all } u\in C_{k}.
\end{equation}
Here we have set, for any given $\mu >0$,
\begin{eqnarray}\label{defQ}
Q_\mu(u) &:=& \frac{\mu (N+4)}{4}||\nabla u||_4^4 + ||\nabla u||_2^2 + (N+2) \int_{\R^N}|u|^2 |\nabla u|^2 dx  \nonumber \\
&-& \frac{N(p-1)}{2(p+1)}||u||^{p+1}_{p+1}.
\end{eqnarray}
Moreover when $c \in (0, c(p,N)]$, the constant $k_0 >0$ can be chosen independently of $c >0$.
\end{lem}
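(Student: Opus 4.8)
The plan is to reduce both inequalities to a single Gagliardo--Nirenberg estimate and then exploit the constraint defining $C_k$. Write $A:=\|\nabla u\|_2^2$ and $B:=\int_{\R^N}|u|^2|\nabla u|^2\,dx$, so that $u\in C_k$ means precisely $A+B=k$, and in particular $0\le A,B\le k$. Dropping the nonnegative terms $\frac{\mu}{4}\|\nabla u\|_4^4$ and $\frac{\mu(N+4)}{4}\|\nabla u\|_4^4$, and using $\frac12 A+B\ge \frac12(A+B)$ together with $A+(N+2)B\ge A+B$, one gets the $\mu$-independent lower bounds
\[
J_{\mu}(u)\ge \frac{1}{2}k-\frac{1}{p+1}\|u\|_{p+1}^{p+1},\qquad
Q_{\mu}(u)\ge k-\frac{N(p-1)}{2(p+1)}\|u\|_{p+1}^{p+1}.
\]
Thus everything reduces to showing that on $C_k$ the nonlinear term is of strictly higher order in $k$: there exist $\delta>0$ and $C=C(c)>0$ with $\|u\|_{p+1}^{p+1}\le C\,k^{1+\delta}$ for all small $k$.

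To produce such a bound I would use that $u\in\Sigma_c$ carries two pieces of Sobolev information: $u\in\dot{H}^1$ with $\|\nabla u\|_2^2=A\le k$, and $u^2\in\dot{H}^1$ with $\|\nabla(u^2)\|_2^2=4B\le 4k$. For $N\ge 3$ the Sobolev inequality then gives $\|u\|_{2^*}\le C A^{1/2}\le C k^{1/2}$ and $\|u\|_{2\cdot 2^*}^2=\|u^2\|_{2^*}\le C B^{1/2}\le C k^{1/2}$, i.e. $\|u\|_{2\cdot 2^*}\le C k^{1/4}$. I then split into two regimes according to the position of $p+1$ relative to $2^*$. (i) If $p+1\le 2^*$, interpolate $L^{p+1}$ between $L^2$ (where $\|u\|_2=c^{1/2}$) and $L^{2^*}$; the exponent count gives $\|u\|_{p+1}^{p+1}\le C(c)\,A^{N(p-1)/4}\le C(c)\,k^{N(p-1)/4}$, and $N(p-1)/4>1$ is exactly the hypothesis $p>1+\frac{4}{N}$. (ii) If $2^*\le p+1<2\cdot 2^*$ --- which is precisely the range allowed by $p<\frac{3N+2}{N-2}$, since $2\cdot 2^*=\frac{4N}{N-2}$ --- interpolate $L^{p+1}$ between $L^{2^*}$ and $L^{2\cdot 2^*}$; both endpoints are controlled by powers of $k$, and the exponent count gives $\|u\|_{p+1}^{p+1}\le C\,k^{N/(N-2)}$ with $\frac{N}{N-2}>1$. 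In both regimes the power of $k$ exceeds $1$. The case $N=1,2$ is simpler: there is no upper restriction on $p$, and regime (i) alone (plain Gagliardo--Nirenberg on $u$) covers all admissible $p$, again with exponent $N(p-1)/4>1$.

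Inserting $\|u\|_{p+1}^{p+1}\le C(c)\,k^{1+\delta}$ into the two displayed lower bounds yields $J_{\mu}(u)\ge \frac12 k\bigl(1-\frac{2C(c)}{p+1}k^{\delta}\bigr)$ and an analogous factorization for $Q_{\mu}$; choosing $k_0=k_0(c)$ so small that the bracketed factors exceed $\frac12$ gives $J_{\mu}(u),\,Q_{\mu}(u)\ge \frac14 k$ for all $k\le k_0$ and all $\mu>0$. For the uniformity statement, the only $c$-dependence of $C(c)$ enters through a nonnegative power $c^{\gamma}$ coming from the factor $\|u\|_2^2=c$ in regime (i) (regime (ii) and the Sobolev constants are $c$-independent); hence on the bounded interval $(0,c(p,N)]$ one has $C(c)\le C_0\,c(p,N)^{\gamma}$, so that $k_0$ can be fixed once and for all.

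The main obstacle is the Gagliardo--Nirenberg estimate with exponent strictly above $1$ valid across the whole range of $p$: it is what forces the two-regime split, and it is where both endpoints of the admissible interval enter essentially --- the lower bound $p>1+\frac{4}{N}$ to beat the exponent $1$ in regime (i), and the upper bound $p<\frac{3N+2}{N-2}$ (for $N\ge 3$) to keep $p+1$ below the doubly critical exponent $2\cdot 2^*$ so that the embedding of $u^2$ into $L^{2^*}$ may be interpolated in regime (ii). Everything else --- discarding the $\mu$-terms, the elementary inequalities for $A$ and $B$, and the uniformity in $c$ --- is routine.
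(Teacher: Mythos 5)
Your proof is correct and follows essentially the same route as the paper: both arguments discard the $\mu$-terms, bound the quadratic part below by a multiple of $k$, and reduce everything to an estimate $\|u\|_{p+1}^{p+1}\le C\big(\int_{\R^N}(1+|u|^2)|\nabla u|^2dx\big)^{\sigma}$ with $\sigma>1$, split at $p+1=2^*$ and yielding the exponents $\frac{N(p-1)}{4}$ and $\frac{N}{N-2}$ in the two regimes. The only immaterial difference is that in the supercritical regime the paper applies Sobolev to $|u|^{\alpha+1}$ with $\frac{p+1}{\alpha+1}=2^*$ and then uses $|u|^{2\alpha}\le 1+|u|^2$, whereas you interpolate $L^{p+1}$ between $L^{2^*}$ and $L^{2\cdot 2^*}$ (the latter controlled via Sobolev on $u^2$); both give the same exponent, and your handling of the uniformity in $c\in(0,c(p,N)]$ matches the paper's.
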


\begin{proof}
When $p\in (1+\frac{4}{N}, \frac{N+2}{N-2})$ if $N\geq 3$ and $p\in (1+\frac{4}{N}, \infty)$ if $N=1,2$, by the Gagliardo-Nirenberg inequality, there exists a $K_1=K_1(p,N)>0$, such that for all $u\in X$,
\begin{eqnarray}\label{11}
\int_{\R^N}|u|^{p+1}dx &\leq& K_1\left (\int_{\R^N}  |\nabla u|^2 dx\right )^{\frac{N(p-1)}{4}}\left ( \int_{\R^N} |u|^2dx \right )^{\frac{(N+2)-(N-2)p}{4}}\nonumber\\
&\leq& K_1\left (\int_{\R^N} (1+|u|^2) |\nabla u|^2 dx\right )^{\frac{N(p-1)}{4}}\left ( \int_{\R^N} |u|^2dx \right )^{\frac{(N+2)-(N-2)p}{4}}.
\end{eqnarray}
Thus, for a constant $K_2 = K_2(p, N) >0$ independent of $c>0$ when $c \in (0, c(p,N))$, we have for all $u \in \Sigma_c$ that
\begin{equation}\label{13}
J_{\mu}(u) \geq \frac{1}{2}\int_{\R^N}(1+ |u|^2 ) |\nabla u|^2 dx - K_2\left (\int_{\R^N} (1+|u|^2) |\nabla u|^2 dx\right )^{\frac{N(p-1)}{4}}.
\end{equation}
\vskip3mm

When $p\in [ \frac{N+2}{N-2}, \frac{3N+2}{N-2}), N\geq 3$, we claim that there exists a $K_3=K_3(p,N)>0$, such that for all $u\in X$,
\begin{eqnarray}\label{12}
\int_{\R^N}|u|^{p+1}dx \leq K_3\left (\int_{\R^N}(1+ |u|^2 ) |\nabla u|^2 dx\right )^{\frac{N}{N-2}}.
\end{eqnarray}
To show \eqref{12}, let $\alpha\in \R$ be such that
$$\frac{p+1}{\alpha+1}=\frac{2N}{N-2}.$$
 Clearly $\alpha\in [0,1)$ if $p\in [ \frac{N+2}{N-2}, \frac{3N+2}{N-2}), N\geq 3$. Then by the Gagliardo-Nirenberg-Sobolev inequality, there exists a $K_3=K_3(p,N)>0$ such that for all $u\in X$,
\begin{eqnarray*}
\int_{\R^N}|u|^{p+1}dx &=& \int_{\R^N}(|u|^{\alpha+1})^{\frac{2N}{N-2}}dx \leq K_3\left(\int_{\R^N}|\nabla(|u|^{\alpha+1})|^2dx \right)^{\frac{N}{N-2}}\\
&=& K_3\left(\int_{\R^N}|u|^{2\alpha}|\nabla u|^2dx \right)^{\frac{N}{N-2}}.
\end{eqnarray*}
Since $\alpha \in [0,1)$, from the last inequality we then obtain \eqref{12}. Thus when $p\in [ \frac{N+2}{N-2}, \frac{3N+2}{N-2}), N\geq 3$, for any $u\in X$, one has
\begin{equation}\label{14}
J_{\mu}(u) \geq \frac{1}{2}\int_{\R^N}(1+ |u|^2 ) |\nabla u|^2 dx - \frac{K_3}{p+1}\left (\int_{\R^N}(1+ |u|^2 ) |\nabla u|^2 dx\right )^{\frac{N}{N-2}},
\end{equation}
\vskip2mm

Note that $\frac{N(p-1)}{4} >1$ as $p>1+\frac{4}{N}$ and $\frac{N}{N-2}>1$ as $N\geq 3$.  Thus when $p$ satisfies the assumption of the lemma, by \eqref{13}-\eqref{14}, we conclude that there exists a $k_0>0$ small, such that for all $k \in (0, k_0)$,
$$
J_{\mu}(u)\geq \frac{1}{4}k>0,\ \mbox{ for all } u\in C_{k}.$$
In addition, we note that when $c \in (0, c(p,N)]$ the constant $k_0 >0$ can be chosen independently of $c >0$. This proves that (\ref{lJ}) holds. Now
by the estimates \eqref{13} and \eqref{14}, one obtains the existence of a constant $K_4=K_4(p,N)>0$, independent of $u\in X$, such that for all $u\in \Sigma_c$,
\begin{eqnarray}\label{esti11}
Q_{\mu}(u)\geq \int_{\R^N}(1+ |u|^2 ) |\nabla u|^2 dx - K_4 \Big \{ \int_{\R^N}(1+ |u|^2 )|\nabla u|^2 dx  \Big \}^{\frac{N(p-1)}{4}},
\end{eqnarray}
where $p\in (1+ \frac{4}{N}, \infty)$ if $N=1,2$ and $p\in (1+ \frac{4}{N}, \frac{N+2}{N-2})$ if $N\geq 3$, and
\begin{eqnarray}\label{esti22}
Q_{\mu}(u)\geq \int_{\R^N}(1+ |u|^2 ) |\nabla u|^2 dx - K_4 \Big \{ \int_{\R^N}(1+ |u|^2 ) |\nabla u|^2 dx  \Big \}^{\frac{N}{N-2}},
\end{eqnarray}
where $p\in [\frac{N+2}{N-2}, \frac{3N+2}{N-2})$ as $N\geq 3$. From \eqref{esti11}-\eqref{esti22} we also  derive that (\ref{lQ}) holds.
\end{proof}

\begin{proof}[Proof of Lemma \ref{lem1}]

To choose $u_1$ we distinguish the cases $c > c(p,N)$ and $c \leq c(p,N)$.  When $c > c(p, N)$ we know from Lemma \ref{lm0} (3) that $J$ has a global minimizer $u_c \in W^{1,2}(\R^N)$ satisfying $J(u_c) = m(c) <0$. Without restriction we can assume that $u_c$ is Schwarz symmetric and combining  \cite[Lemma 4.6]{CJS} and  \cite[Lemma 5.10]{LWW2}, we see that $u_c$, having an exponential decrease at infinity, belongs to $X$. Thus setting $u_1 = u_c$ and taking $k_0>0$ smaller if necessary we then have
\begin{equation}\label{star}
J(u_1) <0 \quad \mbox{ and } \quad \int_{\R^N} (1+|u_1|^2) |\nabla u_1|^2 dx >k_0.
\end{equation}
The value $k_0 >0$ being defined in Lemma \ref{estimates}. Now taking $\mu_0 >0$ small enough we have by continuity that $J_{\mu}(u_1) <0$ for all $\mu \in (0, \mu_0)$. \medskip

When $c \leq c(p,N)$ we use the fact that for $c = c(p,N)$, $J$ has a global minimizer $u_{c(p,N)}$ with $J(u_{c(p,N)})= 0$ that we can assume to be Schwarz symmetric and in $X$. We set
$$ k_1 = \int_{\R^N}(1 + |u_{c(p,N)}|^2) |\nabla u_{c(p,N)}|^2 dx.$$
Restricting $k_0 >0$ in Lemma \ref{estimates} if necessary we can assume that $2k_0 <  k_1$. By continuity there exists a $t_0 <1$ such that, for any $t \in (t_0,1)$
$$J(\sqrt{t} u_{c(p,N)}) < \frac{1}{4}k_0$$
and
$$\int_{\R^N}(1 + | \sqrt{t}u_{c(p,N)}|^2 ) |\nabla (\sqrt{t} u_{c(p,N)})|^2 dx \geq \frac{3}{2}k_0.$$
Note that $\sqrt{t} u_{c(p,N)} \in \Sigma_c$ with $c = t c(p,N)$. \medskip

Now we set $c_0 = t_0 c(p,N)$ and for each $c \in (c_0, c(p,N)),$ we choose $u_1$ as $u_1 = \sqrt{t} u_{c(p,N)}$ where $t \in (t_0,1)$ is such that $u_1 \in \Sigma_c$. Finally taking $\mu_0 >0$ small enough we have by continuity that $J_{\mu}(u_1) < \frac{1}{4}k_0$ for all $\mu \in (0, \mu_0)$.



\medskip

To choose $u_0\in \Sigma_c$, we consider, for any arbitrary Schwartz symmetric function, the scaling
$$v^{\theta}(x):= \theta^{N/2}v(\theta x),\  \forall\ \theta>0.$$
By direct calculations we observe that
$$ v^{\theta}\in \Sigma_c, \forall\ \theta>0, \quad \lim_{\theta\to 0}J_{\mu}(v^{\theta})=0 \quad \mbox{and} \quad \lim_{\theta\to 0}\int_{\R^N} (1+|v^{\theta}|^2) |\nabla v^{\theta}|^2 dx =0.$$
Thus,  setting $u_0 = v^{\theta_0}$ for a fixed $\theta_0>0$ sufficiently small we have
$$J_{\mu}(u_0)\leq \frac{1}{8}k_0, \quad \mbox{ and } \quad \int_{\R^N} (1+|u_0|^2) |\nabla u_0|^2 dx<k_0.$$
\end{proof}

We now show that the geometry of $J$  presents a local minimum when $c \in (c_0, c(p,N))$. Actually we shall get a local minimizer of $J_{\mu}$ on $\Sigma_c$  by considering the minimization problem
\begin{equation}\label{minimum}
\tilde{m}_{\mu}(c):=\inf_{u\in \Sigma_c \setminus B} J_{\mu}(u),
\end{equation}
where $B:= \bigcup_{0<k\leq k_0}C_k$ and $k_0>0$ is given in Lemma \ref{estimates}.
\begin{lem}[Geometry of local minima]\label{defmin}
For any given $c \in (c_0, c(p,N))$ we have
\begin{equation}\label{minimum2}
0 \leq \tilde{m}_{\mu}(c)=\inf\limits_{u\in \Sigma_c \setminus B} J_{\mu}(u)< \inf_{u\in C_{k_0}}J_{\mu}(u).
\end{equation}
\end{lem}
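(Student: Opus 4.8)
The plan is to prove the two inequalities in \eqref{minimum2} separately, since both follow quickly from results already established. The first observation to record is the structure of the sets involved: every $u \in \Sigma_c$ is a nonzero $L^2$-function, so its gradient cannot vanish identically (a constant in $W^{1,2}(\R^N)$ must be $0$), whence $\int_{\R^N}(1+|u|^2)|\nabla u|^2\,dx \geq \|\nabla u\|_2^2 > 0$. Consequently $B = \{u \in \Sigma_c : \int_{\R^N}(1+|u|^2)|\nabla u|^2\,dx \leq k_0\}$ and $\Sigma_c \setminus B$ is exactly the set where this quantity exceeds $k_0$, with $C_{k_0}$ the level set equal to $k_0$ separating the two regions. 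This is the picture that makes $\tilde{m}_\mu(c)$ a legitimate local-minimum level.

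For the lower bound $\tilde{m}_\mu(c) \geq 0$, I would first note that $\Sigma_c \subset S(c)$: for $u \in W^{1,4}\cap W^{1,2}(\R^N)$, Hölder's inequality gives $\int_{\R^N}|u|^2|\nabla u|^2\,dx \leq \|u\|_4^2\,\|\nabla u\|_4^2 < \infty$, so $u \in \mathcal{X}$. Since we are in the range $c \in (c_0, c(p,N)) \subset (0, c(p,N))$, Lemma \ref{lm0}(3)(i) yields $m(c) = \inf_{S(c)} J = 0$, hence $J(u) \geq 0$ for all $u \in S(c)$. Because the perturbation is nonnegative, $J_\mu(u) = \tfrac{\mu}{4}\|\nabla u\|_4^4 + J(u) \geq 0$ for every $u \in \Sigma_c$, in particular on $\Sigma_c \setminus B$; taking the infimum gives $\tilde{m}_\mu(c) \geq 0$.

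For the strict upper bound, I would use Lemma \ref{estimates}: every $u \in C_{k_0}$ satisfies $J_\mu(u) \geq \tfrac14 k_0$, so $\inf_{u \in C_{k_0}} J_\mu(u) \geq \tfrac14 k_0 > 0$. It therefore suffices to exhibit one competitor in $\Sigma_c \setminus B$ with energy strictly below $\tfrac14 k_0$. The function $u_1 = \sqrt{t}\,u_{c(p,N)}$ built in the proof of Lemma \ref{lem1} does precisely this: it was arranged so that $\int_{\R^N}(1+|u_1|^2)|\nabla u_1|^2\,dx \geq \tfrac32 k_0 > k_0$, placing $u_1 \in \Sigma_c \setminus B$, and so that $J(u_1) < \tfrac14 k_0$, whence for $\mu$ small enough $J_\mu(u_1) < \tfrac14 k_0$. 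Combining, $\tilde{m}_\mu(c) \leq J_\mu(u_1) < \tfrac14 k_0 \leq \inf_{u \in C_{k_0}} J_\mu(u)$, which is \eqref{minimum2}.

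There is no serious analytic obstacle here, since the genuine work was front-loaded into the coercivity/positivity estimates of Lemma \ref{estimates} and the explicit construction of $u_1$ in Lemma \ref{lem1}; the only point demanding care is the bookkeeping described above, namely identifying $\Sigma_c \setminus B$, $B$, and the barrier $C_{k_0}$ correctly. The conceptual payoff I would emphasize is that the strict gap $\tilde{m}_\mu(c) < \inf_{C_{k_0}} J_\mu$ forces any minimizing sequence for $\tilde{m}_\mu(c)$ to stay uniformly away from the separating set $C_{k_0}$, so that a limit point will lie in the interior of $\Sigma_c \setminus B$ and be a true local minimizer of $J_\mu$ on $\Sigma_c$ rather than a boundary point.
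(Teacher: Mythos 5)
Your proof is correct and follows essentially the same route as the paper: the lower bound comes from Lemma \ref{lm0} (since $m(c)=0$ for $c<c(p,N)$ and the perturbation term is nonnegative), and the strict upper bound comes from comparing the competitor $u_1$ built in the proof of Lemma \ref{lem1} against the barrier estimate $\inf_{u\in C_{k_0}}J_{\mu}(u)\geq \tfrac{1}{4}k_0$ of Lemma \ref{estimates}. You simply make explicit which element of $\Sigma_c\setminus B$ the paper invokes and why it lies outside $B$.
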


\begin{proof}
From the proof of Lemma \ref{lem1}, we know that there exists a $v_0\in \Sigma_c \setminus B$ such that $J_{\mu}(v_0)<\inf\limits_{u\in C_{k_0}}J_{\mu}(u)$. Thus
$$\inf_{u\in \Sigma_c \setminus B} J_{\mu}(u)\leq J_{\mu}(v_0)<\inf_{u\in C_{k_0}}J_{\mu}(u).$$
In addition, by Lemma \ref{lm0}, we know that $\tilde{m}_{\mu}(c)=\inf_{u\in \Sigma_c \setminus B} J_{\mu}(u)\geq 0$. Thus the proof is completed.
\end{proof}

In view of Lemmas \ref{lem1} and \ref{defmin} we shall search for critical points of $J_{\mu}$ at the levels $\gamma_{\mu}(c)$ and $\tilde{m}_{\mu}(c)$. For this we establish the existence of special Palais-Smale sequences associated with $\gamma_{\mu}(c)$ and $\tilde{m}_{\mu}(c)$ and then show their convergence. In that direction we first observe

\begin{lem}\label{boundedness}
Assume that $p\in (1+\frac{4}{N}, 3+\frac{4}{N})$, $N \geq 1$. Then for any fixed $c \in (0, \infty)$ and any fixed $\mu >0$, if a sequence $\{u_n\} \subset \Sigma_c$ is such that $\{J_{\mu}(u_n)\} \subset \R$ is bounded then it is bounded in $X$.
\end{lem}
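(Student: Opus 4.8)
The plan is to show that the quartic perturbation term $\frac{\mu}{4}\|\nabla u\|_4^4$ by itself supplies enough coercivity to dominate the focusing term $-\frac{1}{p+1}\|u\|_{p+1}^{p+1}$, uniformly on $\Sigma_c$. Since $\|u_n\|_2^2 = c$ is fixed, it suffices to bound the four quantities $\|\nabla u_n\|_4$, $\|\nabla u_n\|_2$, $\int_{\R^N}|u_n|^2|\nabla u_n|^2\,dx$ and $\|u_n\|_4$, which together control the $X$-norm.

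First I would estimate the focusing term by a Gagliardo--Nirenberg inequality anchored at $\|\nabla u\|_4$ rather than at $\|\nabla u\|_2$: there exist $a = \frac{2N(p-1)}{(N+4)(p+1)} \in (0,1)$ and $C = C(p,N)>0$ with $\|u\|_{p+1} \le C\,\|\nabla u\|_4^{a}\,\|u\|_2^{1-a}$, whence, using $\|u\|_2^2 = c$,
\[
\|u\|_{p+1}^{p+1} \le C\,\|\nabla u\|_4^{\,a(p+1)}\,c^{(1-a)(p+1)/2}, \qquad a(p+1) = \frac{2N(p-1)}{N+4}.
\]
For $N \ge 5$ this requires $p+1 \le \frac{4N}{N-4}$, which holds because $p+1 < 4 + \frac{4}{N} < \frac{4N}{N-4}$; for $N \le 4$ the embedding $W^{1,4}(\R^N)\hookrightarrow L^q(\R^N)$ for every finite $q$ makes the inequality available directly. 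The decisive computation is that
\[
a(p+1) = \frac{2N(p-1)}{N+4} \le \frac{4(N+2)}{N+4} < 4 \qquad \text{for all } p \in \Big(1+\tfrac{4}{N},\, 3+\tfrac{4}{N}\Big),
\]
so the power of $\|\nabla u\|_4$ generated by the focusing term is strictly below the quartic order of the perturbation.

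Applying Young's inequality to absorb $\|\nabla u\|_4^{\,a(p+1)}$ into $\frac{\mu}{4}\|\nabla u\|_4^4$ (up to a constant $C_\mu$ depending also on $c,p,N$), I would obtain
\[
J_\mu(u) \ge \frac{\mu}{8}\|\nabla u\|_4^4 + \frac{1}{2}\|\nabla u\|_2^2 + \int_{\R^N}|u|^2|\nabla u|^2\,dx - C_\mu, \qquad \forall\, u \in \Sigma_c.
\]
If $J_\mu(u_n) \le M$, this bounds at once $\|\nabla u_n\|_4$, $\|\nabla u_n\|_2$ and $\int_{\R^N}|u_n|^2|\nabla u_n|^2\,dx$. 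To close the $X$-bound I would control $\|u_n\|_4$ by a second Gagliardo--Nirenberg inequality $\|u\|_4 \le C\|\nabla u\|_4^{\,b}\|u\|_2^{1-b}$ with $b = \frac{N}{N+4} \in (0,1)$, whose right-hand side is now bounded; combined with $\|u_n\|_2 = \sqrt{c}$ this yields boundedness of $\{u_n\}$ in $X$.

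The main obstacle, and the reason the statement is not entirely routine, is that the natural Gagliardo--Nirenberg estimate of Lemma \ref{estimates} anchored at $\|\nabla u\|_2$ carries the exponent $\frac{N(p-1)}{4} > 1$ in the mass-supercritical regime $p > 1+\frac{4}{N}$, so the terms $\frac{1}{2}\|\nabla u\|_2^2 + \int_{\R^N}|u|^2|\nabla u|^2\,dx$ of $J$ cannot by themselves dominate the focusing term for large $u$. Coercivity therefore relies genuinely on the perturbation, and the crux is the verification that the relevant $\|\nabla u\|_4$-exponent $\frac{2N(p-1)}{N+4}$ remains below $4$ across the whole admissible range; this is precisely where the fixed $\mu>0$ and the upper bound $p < 3+\frac{4}{N}$ enter.
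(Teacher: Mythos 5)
Your proof is correct, but it takes a genuinely different route from the paper's. You obtain coercivity from the perturbation term: you estimate $\|u\|_{p+1}^{p+1}$ by a Gagliardo--Nirenberg inequality anchored at $\|\nabla u\|_4$, check that the resulting exponent $\frac{2N(p-1)}{N+4}$ stays below $4$ for $p<3+\frac 4N$, and absorb it into $\frac{\mu}{4}\|\nabla u\|_4^4$ by Young's inequality. The paper instead uses the sharper inequality \eqref{pf4.5.1} from \cite{CJS}, which bounds $\|u\|_{p+1}^{p+1}$ by $\|u\|_2^{\alpha}\bigl(\int_{\R^N}|u|^2|\nabla u|^2dx\bigr)^{\frac{N(p-1)}{2(N+2)}}$ with exponent $\frac{N(p-1)}{2(N+2)}<1$ precisely when $p<3+\frac 4N$; hence the quasi-linear term of the \emph{unperturbed} functional already dominates the focusing term, and the bound on $\int_{\R^N}(1+|u_n|^2)|\nabla u_n|^2dx$ is uniform in $\mu$. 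This makes your closing diagnosis inaccurate: coercivity does \emph{not} rely on the perturbation --- indeed $m(c)>-\infty$ for $p<3+\frac 4N$ by Lemma \ref{lm0} --- and the exponent $\frac{N(p-1)}{4}>1$ you cite from Lemma \ref{estimates} is simply not the optimal one. Your argument does prove the lemma as stated (for fixed $\mu>0$), but your constant $C_\mu$ degenerates as $\mu\to 0$, whereas the $\mu$-uniformity of the paper's bound is what is actually invoked later in the convergence step (Theorem \ref{th2}, where $\mu_n\to 0$). So the two approaches are not interchangeable in the larger architecture of the paper, even though both settle this particular statement.
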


\begin{proof}
By the Gagilardo-Nirenberg-Sobolev inequality, it has been proved (see \cite[(4.5)]{CJS}) that, for any $u\in X$ there holds
\begin{eqnarray}\label{pf4.5.1}
\int_{\R^N}|u|^{q+1}dx \leq C \cdot \|u\|_{2}^{\frac{3N+2-(N-2)q}{N+2}}\cdot \left( \int_{\R^N}|u|^2|\nabla u|^2dx  \right)^{\frac{N(q-1)}{2(N+2)}},
\end{eqnarray}
for $q\in (1, \frac{3N+2}{N-2})$ if $N\geq 3$ and $q\in (1, \infty)$ if $N=1,2$, $C>0$ is a constant, independent of $u$. Thus we have
\begin{eqnarray*}
J_{\mu}(u_n)\geq \frac{\mu}{4}\|\nabla u_n\|_4^4 + \frac{1}{2}\int_{\R^N}(1+|u_n|^2)|\nabla u_n|^2dx-C\cdot \left( \int_{\R^N}(1+|u_n|^2)|\nabla u_n|^2dx   \right)^{\frac{N(p-1)}{2(N+2)}}.
\end{eqnarray*}

When $p<3+\frac{4}{N}$, we have $\frac{N(p-1)}{2(N+2)}<1$. Then the last inequality implies from the boundedness of $\{J_{\mu}(u_n)\}$ that $\{\int_{\R^N}(1+|u_n|^2)|\nabla u_n|^2dx\}$ is bounded, and also for fixed $\mu>0$, $\{\|\nabla u_n\|_4^4\}$ is bounded. In addition, because of \eqref{pf4.5.1}, $\{\|u_n\|_4^4\}$ is bounded. Thus $\{u_n\}$ is bounded in $X$.
\end{proof}

From Lemma \ref{boundedness} we know, in particular, that any Palais-Smale sequence for $J_{\mu}$ is bounded. The need to use special Palais-Smale sequences comes from the difficulty to pass from weak convergence to strong convergence. A problem due to the fact that our equation is set on all $\R^N$. In order to regain some compactness we could
proceed as in \cite[Lemma 2.1]{LLW} by working in the subspace of radially symmetry functions $W_r^{1,4}\cap W_r^{1,2}(\R^N)$, $N\geq 2$. However, when $N=1$, the inclusion $H_{r}^{1}(\R) \subset L^{q}(\R)$ for $q>2$ is not compact and another proof is needed. Here we choose to construct  special Palais-Smale sequences for $J_{\mu}$ which consist of almost Schwarz symmetric functions. This allows us to give a unify proof in any dimension.  \medskip

Even if we work with sequences of almost symmetric functions it is not automatic that they converge to a non-trivial weak limit. To avoid this possibility we construct Palais-Smale sequences $\{u_n\} \subset \Sigma_c$ which satisfy in addition the property that $Q_{\mu}(u_n) \to 0$. For the mountain pass level $\gamma_{\mu}(c)$ this is done using the trick introduced in \cite{LJ}. For $\tilde{m}_{\mu}(c)$ a direct argument will provide the result. \medskip

\begin{lem}\label{lm2}[A special Palais-Smale sequence for $\gamma_{\mu}(c)$]
Assume that $p\in (1+\frac{4}{N}, 3+\frac{4}{N})$, $N \geq 1$. Then, for any given $c \geq c_0$ where $c_0 >0$ is given in Lemma \ref{lem1}, there exist a sequence $\{u_n \}\subset \Sigma_c$ and a sequence $\{v_n\}\subset X$ of Schwarz symmetric functions, such that
\begin{equation}\label{SPSC}
\left\{
\begin{array}{l}
J_{\mu}(u_n)\to \gamma_{\mu}(c)>0,\\
\|J'_{\mu}|_{\Sigma_c}(u_n)\|_{X^{\ast}}\to 0,\\
Q_{\mu}(u_n)\to 0,\\
\|u_n-v_n\|_{X}\to 0,\\
\end{array}
\right.
\end{equation}
as $n\to \infty$. Here $X^{\ast}$ denotes the dual space of $X$.
\end{lem}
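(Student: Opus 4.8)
The plan is to deduce Lemma \ref{lm2} from a localized minimax principle applied to an \emph{augmented} functional on the product manifold $\Sigma_c \times \R$, following the device of \cite{LJ}, building the minimizing family of paths out of (almost) Schwarz symmetric functions so that the resulting Palais--Smale sequence inherits the symmetry information. First I would introduce the $L^2$-preserving dilation $H(u,s)(x):= e^{Ns/2}u(e^s x)$ for $s\in\R$, which maps $\Sigma_c$ into itself since $H(u,s)=u^{e^s}$ in the notation of \eqref{defQ}, and set
$$\bar J_\mu(u,s):= J_\mu(H(u,s)), \qquad (u,s)\in \Sigma_c\times\R.$$
A direct computation using the group property $H(H(u,s_0),\tau)=H(u,s_0+\tau)$ gives $\partial_s \bar J_\mu(u,s)=Q_\mu(H(u,s))$, so that the $s$-derivative of $\bar J_\mu$ records exactly the Pohozaev quantity $Q_\mu$. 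The functional $\bar J_\mu$ is $C^1$ on the codimension-one manifold $\Sigma_c\times\R\subset X\times\R$.

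I would then consider the augmented mountain pass value $\bar\gamma_\mu(c):=\inf_{\bar g\in\bar\Gamma_c}\max_{t\in[0,1]}\bar J_\mu(\bar g(t))$, where
$$\bar\Gamma_c:=\{\bar g\in C([0,1],\Sigma_c\times\R):\ \bar g(0)=(u_0,0),\ \bar g(1)=(u_1,0)\},$$
and verify that $\bar\gamma_\mu(c)=\gamma_\mu(c)$: the inequality $\le$ comes from testing with paths of the form $(g,0)$, $g\in\Gamma_c$, while $\ge$ follows from the correspondence $\bar g=(g,s)\mapsto (t\mapsto H(g(t),s(t)))$, which sends $\bar\Gamma_c$ into $\Gamma_c$ without changing the values of the functional (the endpoints are fixed because $s(0)=s(1)=0$). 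In particular Lemma \ref{lem1} yields the strict inequality $\bar\gamma_\mu(c)>\max\{\bar J_\mu(u_0,0),\bar J_\mu(u_1,0)\}=\max\{J_\mu(u_0),J_\mu(u_1)\}$.

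The heart of the argument is the choice of minimizing sequence of paths. Because $u_0,u_1$ are Schwarz symmetric and $J_\mu$ does not increase under Schwarz symmetrization (the terms $\int|\nabla u|^2$, $\int|\nabla u|^4$ and $\int|u|^2|\nabla u|^2$ do not increase by P\'olya--Szeg\H{o}, applied also to $u^2$ for the last term, while $\|u\|_{p+1}$ and $\|u\|_2$ are preserved), I would produce, by means of a continuous symmetrization, a minimizing sequence $g_n\in\Gamma_c$ whose values are as close as we wish in $X$ to their Schwarz symmetrizations, and set $\bar g_n:=(g_n,0)$, so that $\max_t\bar J_\mu(\bar g_n(t))\to\bar\gamma_\mu(c)$. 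Applying the localized minimax principle (the statement that, given the strict mountain pass inequality, any minimizing sequence of paths admits a nearby Palais--Smale sequence, legitimate here because Palais--Smale sequences are bounded by Lemma \ref{boundedness}), I obtain a sequence $(u_n,s_n)$ with $\bar J_\mu(u_n,s_n)\to\bar\gamma_\mu(c)$, $\|\bar J_\mu'|_{\Sigma_c\times\R}(u_n,s_n)\|\to 0$ and $\mathrm{dist}_{\Sigma_c\times\R}((u_n,s_n),\bar g_n([0,1]))\to 0$. Since the $s$-component of $\bar g_n$ vanishes identically, this forces $s_n\to 0$ together with $\mathrm{dist}_{\Sigma_c}(u_n,g_n([0,1]))\to 0$.

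Finally I would set $w_n:=H(u_n,s_n)=u_n^{e^{s_n}}$ and rename it as the sequence of the statement. The identity $J_\mu(w_n)=\bar J_\mu(u_n,s_n)\to\gamma_\mu(c)$ is exact, and $Q_\mu(w_n)=\partial_s\bar J_\mu(u_n,s_n)\to 0$ follows from the $s$-component of the Palais--Smale condition. The $u$-component, together with the fact that $\phi\mapsto\phi^{e^{s_n}}$ is a bijection of the tangent spaces $T_{u_n}\Sigma_c\to T_{w_n}\Sigma_c$ whose operator norm on $X$ stays bounded because $s_n\to 0$, yields $\|J_\mu'|_{\Sigma_c}(w_n)\|_{X^*}\to 0$. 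For the almost symmetry I would let $v_n$ be the Schwarz symmetrization of the point of $g_n([0,1])$ nearest to $u_n$, dilated by $e^{s_n}$; then $v_n$ is Schwarz symmetric and $\|w_n-v_n\|_X$ is controlled by the localization distance plus the (uniformly small) $X$-distance of the path values to their symmetrizations, the dilation $H(\cdot,s_n)$ being uniformly bounded on $X$ as $s_n\to 0$. The main obstacle is precisely this symmetric step: Schwarz symmetrization is not continuous on $W^{1,2}\cap W^{1,4}(\R^N)$, so one cannot symmetrize a path value-by-value; circumventing this through a continuous symmetrization scheme (whence the \emph{almost} Schwarz symmetric functions) while simultaneously keeping the augmented dilation parameter $s_n$ bounded, so that all $X$-norm estimates survive the rescaling, is the delicate part of the proof.
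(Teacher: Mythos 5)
Your overall strategy coincides with the paper's: the augmented functional $\widetilde J_\mu(u,s)=J_\mu(H(u,s))$ on $\Sigma_c\times\R$, the identity $\widetilde\gamma_\mu(c)=\gamma_\mu(c)$ via the two path correspondences, the Ekeland-type localized minimax principle (the paper's Lemma \ref{lm-ekeland}, taken from \cite{LJ}), the identification $Q_\mu(H(u,s))=\partial_s\widetilde J_\mu(u,s)$ to extract $Q_\mu(u_n)\to 0$ from the $s$-component of the derivative, and the transfer of the $u$-component back to $T_{u_n}\Sigma_c$ through $\phi\mapsto H(\phi,-s_n)$ with uniformly bounded operator norm because $|s_n|\le 1/\sqrt n$. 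All of that matches the paper step for step.

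The genuine gap is in how you produce the Schwarz symmetric comparison sequence $\{v_n\}$. You require a minimizing family of paths $g_n$ whose \emph{values are uniformly close in $X$ to their own Schwarz symmetrizations}, to be obtained by an unspecified ``continuous symmetrization scheme,'' and you yourself flag this as the delicate unresolved point. This is both stronger than what is needed and not something continuous (Steiner) symmetrization is known to deliver in $W^{1,4}\cap W^{1,2}$: Schwarz symmetrization is not continuous in these norms, and closeness of $g_n(t)$ to $g_n(t)^*$ in $X$ is simply not available for a generic minimizing path. The paper avoids this entirely: it replaces $g_n$ by its pointwise symmetrization $g_n^*$ (the maximum of $J_\mu$ along the path does not increase, by P\'olya--Szeg\H{o} together with \cite[Lemma 4.3]{CJS}), feeds the \emph{symmetrized} path $\widetilde g_n=(g_n^*,0)$ into the Ekeland lemma, and then takes $v_n:=g_n^*(t_n)$ to be the nearest point \emph{on the symmetrized path}; the bound $\|w_n-g_n^*(t_n)\|_X\le 1/\sqrt n$ then comes for free from the localization property (ii), and $\|u_n-v_n\|_X\to 0$ follows since $\|H(w_n,s_n)-w_n\|_X\to 0$ as $s_n\to 0$. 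In short: symmetrize first, localize second, and never compare a path value to its own symmetrization. You should restructure your last step accordingly; as written, the construction of $\{v_n\}$ does not go through.
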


Before proving Lemma \ref{lm2} we need to introduce some notations and to prove some preliminary results. For any fixed $\mu>0$, we introduce the following auxiliary functional
$$\widetilde{J}_{\mu}: \Sigma_c \times \R \to \R,\qquad  (u, s)\mapsto J_{\mu}(H(u,s)),$$
where $H(u(x),s):= e^{\frac{N}{2}s}u(e^s x)$, and the set of paths
\begin{align*}
\widetilde{\Gamma}_c:= \Big\{\widetilde{g} \in C([0,1], \Sigma_c \times \R) :  \ \widetilde{g}(0)= (u_0, 0),\ \widetilde{g}(1)= (u_1, 0) \Big\},
\end{align*}
where $u_0, u_1 \in \Sigma_c$ are given in Lemma \ref{lem1}. Observe that defining
$$\widetilde{\gamma}_{\mu}(c):= \inf_{\widetilde{g}\in \widetilde{\Gamma}_c}\max\limits_{t\in [0,1]}\widetilde{J}_{\mu}(\widetilde{g}(t)),$$
we have that
\begin{eqnarray}\label{ggamma(c)}
\widetilde{\gamma}_{\mu}(c) = \gamma_{\mu}(c).
\end{eqnarray}
Indeed, by the definitions of $\widetilde{\gamma}_{\mu}(c)$ and $\gamma_{\mu}(c)$, \eqref{ggamma(c)} follows immediately from the fact that the maps
$$\varphi: \Gamma_c \longrightarrow \widetilde{\Gamma}_c,\ g \longmapsto \varphi(g):=(g,0),$$
and
$$\psi: \widetilde{\Gamma}_c \longrightarrow \Gamma_c,\ \widetilde{g} \longmapsto \psi(\widetilde{g}):=H \circ \widetilde{g},$$
satisfy
$$\widetilde{J}_{\mu}(\varphi(g)) = J_{\mu}(g)\ \mbox{ and }\ J_{\mu}(\psi(\widetilde{g})) = \widetilde{J}_{\mu}(\widetilde{g}).$$

In the proof of Lemma \ref{lm2}, the lemma below  which has been established by the Ekeland variational principle in \cite[Lemma 2.3]{LJ} is used. Hereinafter we denote by $E$ the set $ X \times \R$ equipped with the norm $\|\cdot\|_E^2 = \|\cdot\|_{X}^2 + |\cdot|_{\R}^2$ and denote by $E^{\ast}$ its dual space.

\begin{lem}\label{lm-ekeland}
Let $\varepsilon>0$. Suppose that $\widetilde{g}_0 \in \widetilde{\Gamma}_c$ satisfies
$$\max\limits_{t\in [0,1]}\widetilde{J}_{\mu}(\widetilde{g}_0(t))\leq \widetilde{\gamma }_{\mu}(c)+ \varepsilon.$$
Then there exists a pair of $(u_0, s_0)\in \Sigma_c \times \R$ such that:
\begin{itemize}
  \item [(1)] $\widetilde{J}_{\mu}(u_0,s_0) \in [\widetilde{\gamma}_{\mu}(c)- \varepsilon, \widetilde{\gamma}_{\mu}(c) + \varepsilon]$;
  \item [(2)] $\min\limits_{t\in [0,1]} \| (u_0,s_0)-\widetilde{g}_0(t) \|_E \leq \sqrt{\varepsilon}$;
  \item [(3)] $\| \widetilde{J}_{\mu}'|_{\Sigma_c \times \R}(u_0,s_0)  \|_{E^{\ast}}\leq 2\sqrt{\varepsilon}$,\ i.e.
$$|\langle \widetilde{J}_{\mu}'(u_0, s_0), z \rangle_{E^{\ast}\times E}  |\leq 2\sqrt{\varepsilon}\left \| z \right \|_E,$$ holds for all $z \in \widetilde{T}_{(u_0,s_0)}:= \{(z_1,z_2) \in E, \langle u_0, z_1\rangle_{L^2}=0\}$.
\end{itemize}
\end{lem}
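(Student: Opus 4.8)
The plan is to recognize this as Ekeland's variational principle applied to the minimax structure on the path space, in the spirit of \cite{LJ}. First I would endow $\widetilde{\Gamma}_c$ with the metric
$$d(\widetilde{g}_1, \widetilde{g}_2) := \max_{t \in [0,1]} \|\widetilde{g}_1(t) - \widetilde{g}_2(t)\|_E ,$$
and observe that $(\widetilde{\Gamma}_c, d)$ is a complete metric space: since convergence in $X$ controls the $L^2$-norm, $\Sigma_c$ is closed in $X$, so $\Sigma_c \times \R$ is a closed (hence complete) subset of $E$, and $\widetilde{\Gamma}_c$, being the subset of $C([0,1], \Sigma_c \times \R)$ with the prescribed endpoints, is closed in the complete space of continuous paths for the sup-metric. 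On this space I introduce $\Phi(\widetilde{g}) := \max_{t \in [0,1]} \widetilde{J}_\mu(\widetilde{g}(t))$, which is continuous and, by the very definition of $\widetilde{\gamma}_\mu(c)$, satisfies $\inf_{\widetilde{\Gamma}_c}\Phi = \widetilde{\gamma}_\mu(c)$. The hypothesis then reads $\Phi(\widetilde{g}_0) \leq \inf \Phi + \varepsilon$.

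Applying Ekeland's variational principle to $\Phi$ on $(\widetilde{\Gamma}_c, d)$ with parameter $\sqrt{\varepsilon}$ produces a path $\widetilde{g}_* \in \widetilde{\Gamma}_c$ with $\Phi(\widetilde{g}_*) \leq \widetilde{\gamma}_\mu(c) + \varepsilon$, with $d(\widetilde{g}_*, \widetilde{g}_0) \leq \sqrt{\varepsilon}$, and with the near-minimality
$$\Phi(\widetilde{g}) \geq \Phi(\widetilde{g}_*) - \sqrt{\varepsilon}\, d(\widetilde{g}, \widetilde{g}_*) \quad \text{for all } \widetilde{g} \in \widetilde{\Gamma}_c .$$
I then let $M_* := \{t \in [0,1] : \widetilde{J}_\mu(\widetilde{g}_*(t)) = \Phi(\widetilde{g}_*)\}$ be the (compact, nonempty) maximum set and claim that $M_*$ contains a point $t_0$ at which the constrained differential is small; I set $(u_0, s_0) := \widetilde{g}_*(t_0)$. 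Conclusions (1) and (2) are then immediate: every path meets the level $\widetilde{\gamma}_\mu(c)$, so $\Phi(\widetilde{g}_*) \in [\widetilde{\gamma}_\mu(c), \widetilde{\gamma}_\mu(c) + \varepsilon]$, giving (1); and $\min_{t} \|(u_0, s_0) - \widetilde{g}_0(t)\|_E \leq \|\widetilde{g}_*(t_0) - \widetilde{g}_0(t_0)\|_E \leq d(\widetilde{g}_*, \widetilde{g}_0) \leq \sqrt{\varepsilon}$, giving (2). It remains to establish (3).

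I would prove (3) by contradiction. Since $J_\mu$ is $C^1$ on $\Sigma_c$ and $H(u,s) = e^{\frac{N}{2}s}u(e^s \cdot)$ acts smoothly, $\widetilde{J}_\mu$ is $C^1$ on the $C^1$-manifold $\Sigma_c \times \R$. Suppose $\|\widetilde{J}_\mu'|_{\Sigma_c \times \R}(\widetilde{g}_*(t))\|_{E^*} > 2\sqrt{\varepsilon}$ for every $t \in M_*$. By continuity this persists on a neighborhood of $M_*$, on which I build a locally Lipschitz pseudo-gradient vector field $W$ for $\widetilde{J}_\mu$, tangent to $\Sigma_c \times \R$, with $\|W\| \leq 1$ and $\langle \widetilde{J}_\mu', W \rangle > 2\sqrt{\varepsilon}$. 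A crucial point is that the strict mountain-pass inequality $\widetilde{\gamma}_\mu(c) = \gamma_\mu(c) > \max\{J_\mu(u_0), J_\mu(u_1)\} = \max\{\widetilde{J}_\mu(\widetilde{g}_*(0)), \widetilde{J}_\mu(\widetilde{g}_*(1))\}$ from Lemma \ref{lem1} forces $M_*$ to stay away from the endpoints $\{0,1\}$; hence, after multiplying $W$ by a cutoff supported near $M_*$, the flow $\eta(\tau, \cdot)$ of $W$ fixes the endpoints and maps $\widetilde{\Gamma}_c$ into itself. Pushing $\widetilde{g}_*$ for a short time $\tau > 0$ yields $\widetilde{g} = \eta(\tau, \widetilde{g}_*(\cdot)) \in \widetilde{\Gamma}_c$ with $d(\widetilde{g}, \widetilde{g}_*) \leq \tau$ while $\Phi(\widetilde{g}) \leq \Phi(\widetilde{g}_*) - 2\sqrt{\varepsilon}\,\tau + o(\tau)$, which for $\tau$ small violates the Ekeland near-minimality. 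This contradiction establishes (3).

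The main obstacle is the constrained deformation in the last step: producing a pseudo-gradient field tangent to the product manifold $\Sigma_c \times \R$ whose flow both preserves $\Sigma_c \times \R$ and keeps paths continuous with fixed endpoints, and then bookkeeping the two competing estimates so that the gain $2\sqrt{\varepsilon}\,\tau$ in lowering the maximum strictly beats the loss $\sqrt{\varepsilon}\, d(\widetilde{g}, \widetilde{g}_*)$. Care is needed because the maximum defining $\Phi$ is taken over $t$, so the decrease must be made uniform over a neighborhood of $M_*$; I would control this by a compactness argument on $M_*$ together with the observation that outside a neighborhood of $M_*$ the values of $\widetilde{J}_\mu \circ \widetilde{g}_*$ are bounded strictly below $\Phi(\widetilde{g}_*)$, so that the deformation there cannot create a new, higher maximum.
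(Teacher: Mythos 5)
Your proposal is correct and follows essentially the same route as the paper's source: the paper does not reprove this lemma but cites \cite[Lemma 2.3]{LJ}, where it is established exactly as you describe, by Ekeland's variational principle on the complete metric space $(\widetilde{\Gamma}_c, d)$ applied to $\widetilde{g}\mapsto\max_{t\in[0,1]}\widetilde{J}_{\mu}(\widetilde{g}(t))$, followed by a tangent pseudo-gradient deformation near the maximum set of the almost-optimal path to produce the almost-critical point. The remaining details are the standard ones you already flag, namely keeping the deformation on $\Sigma_c\times\R$ (e.g.\ via $L^2$-renormalization, a second-order correction) and cutting off near the endpoints, which is legitimate since $\widetilde{\gamma}_{\mu}(c)=\gamma_{\mu}(c)>\max\{J_{\mu}(u_0),J_{\mu}(u_1)\}$ keeps the maximum set away from $t=0,1$.
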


\begin{proof}[Proof of Lemma \ref{lm2}] For each $n\in \N$, by the definition of $\gamma_{\mu}(c)$, there exists a $g_n\in \Gamma_c$ such that
$$\max\limits_{t\in [0,1]}J_{\mu}(g_n(t))\leq \gamma_{\mu}(c) + \frac{1}{n}.$$
Denote by $g_n^{\ast}$ the Schwarz symmetrization of $g_n\in \Gamma_c$. Then by the Polya-Szeg\"{o} inequality $\|\nabla u^{\ast}\|_q^q\leq\|\nabla u\|_q^q,\ \forall q\in [1,\infty)$, and using \cite[Lemma 4.3]{CJS}, we have that
$$\max_{t\in [0,1]}J_{\mu}(g_n^{\ast}(t))\leq \max_{t\in [0,1]}J_{\mu}(g_n(t)).$$
Since $\widetilde{\gamma}_{\mu}(c)=\gamma_{\mu}(c)$, then for each $n\in \N$, $\widetilde{g}_n:=(g_n^{\ast}, 0)\in \widetilde{\Gamma}_c$ and satisfies
$$\max\limits_{t\in [0,1]}\widetilde{J}_{\mu}(\widetilde{g}_n(t))\leq \widetilde{\gamma}_{\mu}(c) + \frac{1}{n}.$$

Thus applying Lemma \ref{lm-ekeland}, we obtain a sequence $\{(w_n,s_n)\}\subset \Sigma_c \times \R$ such that:
\begin{itemize}
  \item [(i)] $\widetilde{J}_{\mu}(w_n, s_n) \in [\gamma_{\mu}(c)-\frac{1}{n}, \gamma_{\mu}(c)+\frac{1}{n}]$;
  \item [(ii)] $\min\limits_{t\in [0,1]} \| (w_n, s_n)-(g_n^{\ast}(t), 0) \|_E \leq\frac{1}{\sqrt{n}}$;
  \item [(iii)] $\| \widetilde{J}_{\mu}'|_{\Sigma_c \times \R}(w_n, s_n)  \|_{E^{\ast}}\leq \frac{2}{\sqrt{n}}$,\ i.e.
$$|\langle \widetilde{J}_{\mu}'(w_n, s_n), z \rangle_{E^{\ast}\times E}  |\leq \frac{2}{\sqrt{n}}\left \| z \right \|_E,$$ holds for all $z \in \widetilde{T}_{(w_n, s_n)}:= \{(z_1,z_2) \in E, \langle w_n, z_1\rangle_{L^2}=0\}$.
\end{itemize}
Now we claim that for each $n\in \N$, there exists a $t_n\in [0,1]$ such that $u_n:=H(w_n, s_n)$ and $v_n:= g_n^{\ast}(t_n)$ satisfy \eqref{SPSC}. Indeed, first, from (i) we have that $J_{\mu}(u_n) \to \gamma_{\mu}(c)$, since $J_{\mu}(u_n)=J_{\mu}(H(w_n, s_n))=\widetilde{J}_{\mu}(w_n, s_n)$. Secondly, by simple calculations, we have
$$Q_{\mu}(u_n) = \langle \widetilde{J}_{\mu}'(w_n, s_n), (0,1) \rangle_{E^{\ast}\times E},$$
and $(0,1)\in \widetilde{T}_{(w_n, s_n)}$. Thus (iii) yields that $Q_{\mu}(u_n) \to 0$. To verify that $\|J'_{\mu}|_{\Sigma_c}(u_n)\|_{X^{\ast}}\to 0$, it suffices to prove for $n\in \N$ sufficiently large, that
\begin{eqnarray}\label{F'u}
|\langle J_{\mu}'(u_n), \phi\rangle_{X^{\ast}\times X}  |\leq \frac{4}{\sqrt{n}}\left \| \phi \right \|_X,\ \mbox{ for all }\ \phi \in T_{u_n},
\end{eqnarray}
where $T_{u_n}:=\{\phi \in X,\  \langle u_n,\phi\rangle_{L^2}=0\}$. To this end, we note that, for each $\phi\in T_{u_n}$, by denoting $\widetilde{\phi}=H(\phi, -s_n)$, one has
\begin{eqnarray*}
\langle J'_{\mu}(u_n), \phi \rangle_{X^{\ast}\times X} &=& \mu \int_{\R^N} |\nabla u_n|^2 \nabla u_n \nabla \phi dx  + \int_{\R^N} \nabla u_n \nabla \phi dx  \\
&+& 2\int_{\R^N} ( u_n \phi |\nabla u_n|^2 +  |u_n|^2\nabla u_n \nabla \phi ) dx - \int_{\R^N} |u_n|^{p-1}u_n \phi dx .\\
&=& e^{(N+4)s_n}\cdot \mu \int_{\R^N} |\nabla w_n|^2 \nabla w_n \nabla \phi dx  + e^{2s_n}\int_{\R^N} \nabla w_n \nabla \phi dx  \\
&+& e^{(N+2)s_n}\cdot 2\int_{\R^N} ( w_n \phi |\nabla w_n|^2 +  |w_n|^2\nabla w_n \nabla \phi ) dx \\
&-& e^{\frac{N(p-1)}{2}s_n} \int_{\R^N} |w_n|^{p-1}w_n \phi dx \ =\ \langle \widetilde{J}_{\mu}'(w_n, s_n), (\widetilde{\phi}, 0)\rangle_{E^{\ast}\times E}.
\end{eqnarray*}
If $(\widetilde{\phi},0)\in \widetilde{T}_{(w_n,s_n)}$ and $\|(\widetilde{\phi},0)\|_E^2\leq 4\|\phi\|_X^2$ as $n\in \N$ sufficiently large, then from (iii) we conclude \eqref{F'u}. To verify this condition, one observes that
$(\widetilde{\phi},0)\in \widetilde{T}_{(w_n,s_n)} \Leftrightarrow \phi \in T_{u_n}$, also from (ii) it follows that
\begin{eqnarray}\label{value-s_n}
|s_n|=|s_n-0|\leq \min\limits_{t\in [0,1]}\|(w_n, s_n)-(g_n^{\ast}(t), 0)\|_E\leq \frac{1}{\sqrt{n}},
\end{eqnarray}
by which we deduce that
\begin{eqnarray*}
\|(\widetilde{\phi},0)\|_E^2 &=& \|\widetilde{\phi}\|_X^2 = \int_{\R^N}|\phi(x)|^2dx + e^{-2s_n}\int_{\R^N}|\nabla \phi(x)|^2dx \\
&+&e^{-N s_n} \int_{\R^N}|\phi(x)|^4dx + e^{-(N+4)s_n}\int_{\R^N}|\nabla \phi(x)|^4dx \ \leq 4 \ \|\phi\|_X^2,
\end{eqnarray*}
holds as $n\in \N$ large enough. Thus \eqref{F'u} has been proved. Finally, we know from (ii) that for each $n\in \N$, there exists a $t_n\in [0,1]$, such that $\| (w_n, s_n)-(g_n^{\ast}(t_n), 0) \|_E \to 0$. This implies in particular that
$$\| w_n - g_n^{\ast}(t_n) \|_X \to 0.$$
Thus from \eqref{value-s_n} and
$$\| u_n - v_n \|_X = \| H(w_n, s_n) - g_n^{\ast}(t_n) \|_X \leq \| H(w_n, s_n) - w_n\|_X + \|w_n- g_n^{\ast}(t_n) \|_X,$$
we conclude that $\| u_n - v_n \|_X \to 0$ as $n\to \infty$. At this point, the proof of the lemma is completed.
\end{proof}

We now derive a similar Palais-Smale sequence for $J_{\mu}$ at the level $\tilde{m}_{\mu}(c)$. As a first step we prove

\begin{lem}\label{lm3}
For any given $c\in (c_0, c(p,N))$, there exists a minimizing sequence $\{u_n\}\subset \Sigma_c \setminus B$ of Schwarz symmetric functions, such that
\begin{eqnarray}\label{31}
\left\{
\begin{array}{l}
J_{\mu}(u_n)\to \tilde{m}_{\mu}(c),\\
Q_{\mu}(u_n)= 0,\ \forall\ n\in \N.\\
\end{array}
\right.
\end{eqnarray}
\end{lem}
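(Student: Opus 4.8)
The plan is to start from an arbitrary minimizing sequence for $\tilde m_{\mu}(c)$ and to modify it in two steps: first make every element Schwarz symmetric, and then use the $L^2$-preserving scaling $u\mapsto u^t$ to enforce the Pohozaev-type condition $Q_{\mu}=0$, controlling the energy throughout by means of the strict gap $\tilde m_{\mu}(c)<\inf_{u\in C_{k_0}}J_{\mu}(u)$ provided by Lemma~\ref{defmin}. Write $A(u):=\int_{\R^N}(1+|u|^2)|\nabla u|^2\,dx$, so that $\Sigma_c\setminus B=\{u\in\Sigma_c:A(u)>k_0\}$ and $C_{k_0}=\{u\in\Sigma_c:A(u)=k_0\}$.

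\emph{Step 1 (symmetrization).} Let $\{w_n\}\subset\Sigma_c\setminus B$ satisfy $J_{\mu}(w_n)\to\tilde m_{\mu}(c)$, and let $w_n^{\ast}$ be the Schwarz symmetrization of $w_n$. Symmetrization preserves the $L^2$- and $L^{p+1}$-norms while, by the P\'olya--Szeg\H{o} inequality together with \cite[Lemma 4.3]{CJS}, it does not increase $\|\nabla w_n\|_2^2$, $\|\nabla w_n\|_4^4$, nor $\int_{\R^N}|w_n|^2|\nabla w_n|^2\,dx$; hence $J_{\mu}(w_n^{\ast})\le J_{\mu}(w_n)$ and $A(w_n^{\ast})\le A(w_n)$. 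The delicate point is that a priori one might have $A(w_n^{\ast})\le k_0$, i.e. $w_n^{\ast}$ could leave $\Sigma_c\setminus B$. To rule this out I would interpolate between $w_n$ and $w_n^{\ast}$ by a continuous rearrangement flow $\tau\mapsto w_{n,\tau}$ (continuous Steiner symmetrization), along which $J_{\mu}$ and $A$ are both nonincreasing. If $A(w_n^{\ast})\le k_0<A(w_n)$ held, continuity would give some $\tau_0$ with $w_{n,\tau_0}\in C_{k_0}$, whence $J_{\mu}(w_{n,\tau_0})\ge\inf_{C_{k_0}}J_{\mu}$; but $J_{\mu}(w_{n,\tau_0})\le J_{\mu}(w_n)\to\tilde m_{\mu}(c)<\inf_{C_{k_0}}J_{\mu}$, a contradiction for $n$ large. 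Thus $w_n^{\ast}\in\Sigma_c\setminus B$ and $\{w_n^{\ast}\}$ is a Schwarz symmetric minimizing sequence.

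\emph{Step 2 (scaling projection onto $Q_{\mu}=0$).} For each $n$ consider the fiber $h_n(t):=J_{\mu}((w_n^{\ast})^{t})$. Using the homogeneities underlying \eqref{defQ} one checks that $h_n(t)\to0$ as $t\to0^+$ and $h_n(t)\to+\infty$ as $t\to+\infty$ (the $\frac{\mu}{4}t^{N+4}\|\nabla w_n^{\ast}\|_4^4$ term dominates), while $A((w_n^{\ast})^{t})$ is continuous and strictly increasing from $0$ to $+\infty$. Hence $\{t>0:(w_n^{\ast})^{t}\in\Sigma_c\setminus B\}=(t_n^{\ast},+\infty)$ for a unique $t_n^{\ast}$ with $(w_n^{\ast})^{t_n^{\ast}}\in C_{k_0}$, and $w_n^{\ast}\in\Sigma_c\setminus B$ forces $t_n^{\ast}<1$. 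On $[t_n^{\ast},+\infty)$ the continuous function $h_n$ tends to $+\infty$, so it attains its minimum at some $\bar t_n$, with $h_n(\bar t_n)\le h_n(1)=J_{\mu}(w_n^{\ast})\to\tilde m_{\mu}(c)$. Were $\bar t_n=t_n^{\ast}$, then $(w_n^{\ast})^{t_n^{\ast}}\in C_{k_0}$ would give $h_n(\bar t_n)\ge\inf_{C_{k_0}}J_{\mu}>\tilde m_{\mu}(c)$, contradicting the previous bound for $n$ large. So $\bar t_n$ is interior, $h_n'(\bar t_n)=0$, and with $u_n:=(w_n^{\ast})^{\bar t_n}$ the relation $(u_n)^{s}=(w_n^{\ast})^{\bar t_n s}$ gives $Q_{\mu}(u_n)=\frac{d}{ds}J_{\mu}(u_n^{s})\big|_{s=1}=\bar t_n\,h_n'(\bar t_n)=0$.

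Finally the $u_n$ are Schwarz symmetric and lie in $\Sigma_c$ (scaling preserves both), belong to $\Sigma_c\setminus B$ because $\bar t_n>t_n^{\ast}$ yields $A(u_n)>k_0$ by strict monotonicity, satisfy $Q_{\mu}(u_n)=0$ for all large $n$, and obey $\tilde m_{\mu}(c)\le J_{\mu}(u_n)=h_n(\bar t_n)\le J_{\mu}(w_n^{\ast})\le J_{\mu}(w_n)$, so that $J_{\mu}(u_n)\to\tilde m_{\mu}(c)$; discarding finitely many initial terms gives \eqref{31}. The main obstacle is Step~1: keeping the symmetric rearrangement from pushing the minimizing sequence onto or across the barrier $C_{k_0}$. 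This is precisely where the strict energy gap of Lemma~\ref{defmin} is indispensable, and the very same gap is what forces, in Step~2, the fiber minimizer to be interior and hence to produce the exact identity $Q_{\mu}(u_n)=0$ rather than a mere boundary point on $C_{k_0}$.
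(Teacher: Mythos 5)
Your Step 2 is sound and is essentially the paper's scaling argument in a slightly different packaging (you minimize the fiber map $t\mapsto J_{\mu}((w_n^{\ast})^{t})$ globally, the paper does a case analysis on the sign of $Q_{\mu}$; both use the gap of Lemma \ref{defmin} to keep the critical scale away from $C_{k_0}$). The genuine gap is in Step 1. You correctly identify that the symmetrized functions $w_n^{\ast}$ might a priori fall into $B$, and the energy bound alone cannot exclude this: by \eqref{lJ} one only gets $J_{\mu}(w_n^{\ast})\geq \frac{1}{4}A(w_n^{\ast})>0$ on $B$, which is not in contradiction with $J_{\mu}(w_n^{\ast})\to\tilde m_{\mu}(c)\geq 0$. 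But the continuous-rearrangement interpolation you propose to close this hole is not available in the form you need. Continuous Steiner symmetrization interpolates between $u$ and its Steiner symmetrization with respect to a hyperplane, not the full Schwarz rearrangement $u^{\ast}$ (reaching $u^{\ast}$ requires an infinite composition of such flows or of polarizations, with only $L^{q}$-convergence in the limit), and, more seriously, along such a flow the gradient functionals $\tau\mapsto\|\nabla u_{\tau}\|_{2}^{2}$, $\|\nabla u_{\tau}\|_{4}^{4}$ are only nonincreasing, not continuous: they can jump downward (e.g.\ when two bumps merge), so the intermediate value argument producing $w_{n,\tau_0}\in C_{k_0}$ breaks down.

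The paper avoids this entirely by reversing the order of your two steps: it first uses the scaling $u\mapsto u^{t}$ to arrange $Q_{\mu}(u_n)=0$ while keeping $u_n\in\Sigma_c\setminus B$ and $J_{\mu}(u_n)\to\tilde m_{\mu}(c)$ (this is where the gap of Lemma \ref{defmin} is used, exactly as in your Step 2), and only then symmetrizes. Since symmetrization does not increase $Q_{\mu}$, one gets $Q_{\mu}(u_n^{\ast})\leq 0$, and the inequality \eqref{lQ} of Lemma \ref{estimates} — $Q_{\mu}>0$ on all of $B$ — then forces $u_n^{\ast}\in\Sigma_c\setminus B$ with no interpolation needed; a final rescaling with $t>1$ (which increases $\int(1+|u|^2)|\nabla u|^2\,dx$, hence stays outside $B$) restores $Q_{\mu}=0$ if $Q_{\mu}(u_n^{\ast})<0$. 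I recommend adopting this ordering: the functional $Q_{\mu}$, not the energy, is what keeps the symmetrized sequence away from the barrier.
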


\begin{proof}
First we prove that there exists a sequence $\{v_n\}\subset \Sigma_c \setminus B$ satisfying \eqref{31}. Let $\{u_n\}\subset \Sigma_c \setminus B$ be such that $J_{\mu}(u_n) \to \tilde{m}_{\mu}(c)$, we claim that we may assume that $\{u_n\}$ satisfies $Q_{\mu}(u_n)=0$ for all $n\in \N$.

Indeed, if $Q_{\mu}(u_n)=0$, for some $n\in \N$, we are done. If $Q_{\mu}(u_n)\neq 0$, then we consider the scaling
\begin{eqnarray}\label{scaling}
u_n^t(x):= t^{N/2}u_n(tx),\ \forall\ t>0.
\end{eqnarray}
Note that for all $t>0$, $u_n^t\in \Sigma_c$ and direct calculations show that $Q_{\mu}(u_n^t)= \frac{1}{t}\frac{d}{dt}J_{\mu}(u_n^t)$ and $Q_{\mu}(u_n^t) \to \infty$ as $t \to \infty$.   Thus if $Q_{\mu}(u_n)<0$ there exists by continuity a $t_n^0>1$ such that $Q_{\mu}(u_n^{t_n^0})=0$ and
\begin{eqnarray}\label{4}
J_{\mu}(u_n^{t_n^0})\leq J_{\mu}(u_n).
\end{eqnarray}
Also $u_n^{t_n^0}\in \Sigma_c \setminus B$. If $Q_{\mu}(u_n)>0$ we also claim that there exists a $t_n^0\in (0,1)$, such that $Q_{\mu}(u_n^{t_n^0})=0$ and $J_{\mu}(u_n^{t_n^0}) \leq J_\mu (u_n)$. To prove the claim first observe that it is not possible to have $Q_{\mu}(u_n^t)>0,\ \forall\ t\in (0,1)$ since otherwise there exists a $t_n^{\ast}\in (0,1)$ such that $\int_{\R^N}(1+|u_n^{t_n^{\ast}}|^2) |\nabla u_n^{t_n^{\ast}}|^2 dx=k_0$ and this  leads to
\begin{eqnarray}\label{5}
J_{\mu}(u_n^{t_n^{\ast}})\leq J_{\mu}(u_n)\to \tilde{m}_{\mu}(c)
\end{eqnarray}
and
\begin{eqnarray}\label{6}
J_{\mu}(u_n^{t_n^{\ast}})\geq \inf_{u\in C_{k_0}}J_{\mu}(u).
\end{eqnarray}
Clearly \eqref{5}-\eqref{6} contradict \eqref{minimum2}.  We conclude that there exists a $t_n^0\in (0,1)$ such that
$$Q_{\mu}(u_n^{t_n^0})=0\ \mbox{ and }\ J_{\mu}(u_n^{t_n^0})\leq J_{\mu}(u_n).$$
Since by Lemma \ref{estimates}, $Q_\mu(u) >0$ for $u \in B$ we also have that $u_n^{t_n^0}\in \Sigma_c\setminus B$. At this point we have shown that it is possible to choose, for each $n \in \N$ a $t_n^0 >0$ such that  $Q_{\mu}(u_n^{t_n^0})=0$, $J_{\mu}(u_n^{t_n^0})\to \tilde{m}_{\mu}(c)$ and $u_n^{t_n^0}\in \Sigma_c\setminus B$.
\vskip2mm

Now denote by $u_n^{\ast}$ the Schwarz symmetrization of $u_n$ and let us prove that $\{u_n^{\ast}\} \subset \Sigma_c \backslash B$ and is a minimizing sequence of $\tilde{m}_{\mu}(c)$. For each $n\in \N$, by the Polya-Szeg\"{o} inequality $\|\nabla u^{\ast}\|_q^q\leq \|\nabla u\|_q^q,\ \forall q\in [1,\infty)$ and also \cite[Lemma 4.3]{CJS} we have
\begin{eqnarray}\label{7}
J_{\mu}(u_n^{\ast})\leq J_{\mu}(u_n),\ \mbox{ and }\ Q_{\mu}(u_n^{\ast})\leq Q_{\mu}(u_n)=0.
\end{eqnarray}
At this point \eqref{lQ} implies that $u_n^{\ast}\in \Sigma_c\setminus B$. This and \eqref{7} lead to $$\left\{
\begin{array}{l}
J_{\mu}(u_n^{\ast})\to \tilde{m}_{\mu}(c),\\
Q_{\mu}(u_n^{\ast})\leq 0,\ \forall\ n\in \N.\\
\end{array}\right.
$$
If $Q_{\mu}(u_n^{\ast})<0$, we may use the above scaling arguments to get a $v_n^{\ast}\in \Sigma_c\setminus B$ satisfying \eqref{31}. At this point the proof is completed.
\end{proof}

\begin{lem}\label{lm4}[A special Palais-Smale sequence for $\tilde{m}_{\mu}(c)$]
Assume that $p\in (1+\frac{4}{N}, 3+\frac{4}{N}), N\geq 1$. Then for any given $c\in (c_0, c(p,N))$, for each $\mu\in (0,\mu_0)$ there exists a sequence $\{u_n\}\in \Sigma_c\setminus B$, and a sequence $\{v_n\}\subset \Sigma_c \backslash B$ of Schwarz symmetric functions, such that
\begin{equation}\label{8}
\left\{
\begin{array}{l}
J_{\mu}(u_n)\to \tilde{m}_{\mu}(c),\\
\|u_n-v_n\|_{X}\to 0,\\
\|J'_{\mu}|_{\Sigma_c}(u_n)\|_{X^{\ast}}\to 0,\\
Q_{\mu}(v_n)=0,\ \forall\ n\in \N,
\end{array}
\right.
\end{equation}
as $n\to \infty$.
\end{lem}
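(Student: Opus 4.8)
The plan is to upgrade the minimizing sequence produced in Lemma \ref{lm3} into a genuine Palais--Smale sequence by means of Ekeland's variational principle, using the energy barrier of Lemma \ref{defmin} to confine the sequence to the open region $\Sigma_c \setminus B$. First I would fix the sequence $\{v_n\} \subset \Sigma_c \setminus B$ of Schwarz symmetric functions given by Lemma \ref{lm3}, so that $J_{\mu}(v_n) \to \tilde{m}_{\mu}(c)$ and $Q_{\mu}(v_n) = 0$ for every $n$. This sequence already carries the last two properties demanded in \eqref{8}; it remains only to manufacture from it a nearby sequence $\{u_n\}$ that is almost stationary for $J_{\mu}$ on the constraint $\Sigma_c$.

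Since $u \mapsto \int_{\R^N}(1+|u|^2)|\nabla u|^2 dx$ is continuous on $X$ and strictly positive on $\Sigma_c$, one has
$$\Sigma_c \setminus B = \Big\{u \in \Sigma_c : \int_{\R^N}(1+|u|^2)|\nabla u|^2 dx > k_0 \Big\},$$
an open subset of the $C^1$ constraint manifold $\Sigma_c$, whose closure in $X$ is the complete set $A := \{u \in \Sigma_c : \int_{\R^N}(1+|u|^2)|\nabla u|^2 dx \geq k_0\}$. On $A$ the functional $J_{\mu}$ is continuous and, by \eqref{lJ} together with $\tilde{m}_{\mu}(c) \geq 0$, bounded below with $\inf_A J_{\mu} = \tilde{m}_{\mu}(c)$ thanks to Lemma \ref{defmin}. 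I would then apply Ekeland's variational principle on $A$ to $J_{\mu}|_{\Sigma_c}$, starting from $v_n$ with $\varepsilon_n := J_{\mu}(v_n) - \tilde{m}_{\mu}(c) \to 0$. This produces $u_n \in A$ with $J_{\mu}(u_n) \leq J_{\mu}(v_n)$, $\|u_n - v_n\|_X \leq \sqrt{\varepsilon_n} \to 0$, and $u_n$ a strict minimizer on $A$ of $w \mapsto J_{\mu}(w) + \sqrt{\varepsilon_n}\|w - u_n\|_X$. In particular $J_{\mu}(u_n) \to \tilde{m}_{\mu}(c)$ and, by Lemma \ref{boundedness}, $\{u_n\}$ is bounded in $X$.

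The crucial use of Lemma \ref{defmin} comes next: because $J_{\mu}(u_n) \leq J_{\mu}(v_n) \to \tilde{m}_{\mu}(c) < \inf_{u \in C_{k_0}} J_{\mu}(u)$, for all $n$ large $u_n$ cannot lie on $C_{k_0} = \partial A$, hence $u_n$ belongs to the open interior $\Sigma_c \setminus B$. Being interior, $u_n$ admits admissible variations in every tangent direction of $\Sigma_c$; testing the Ekeland minimality against curves $t \mapsto \pi(u_n + t\phi)$, where $\phi \in T_{u_n} := \{\phi \in X : \langle u_n, \phi\rangle_{L^2} = 0\}$ and $\pi(w) := \sqrt{c}\, w/\|w\|_2$ is the radial projection onto $\Sigma_c$, yields $\|J'_{\mu}|_{\Sigma_c}(u_n)\|_{X^{\ast}} \leq C\sqrt{\varepsilon_n} \to 0$. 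Together with $\|u_n - v_n\|_X \to 0$ and the Schwarz symmetry and vanishing of $Q_{\mu}(v_n)$ inherited from Lemma \ref{lm3}, this gives the four conclusions of \eqref{8}.

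The hard part will be the final step: converting Ekeland's ``almost minimizer of the perturbed functional'' into the constrained smallness $\|J'_{\mu}|_{\Sigma_c}(u_n)\|_{X^{\ast}} \to 0$. This requires, for each tangent direction $\phi \in T_{u_n}$, constructing through $u_n$ an admissible curve on $\Sigma_c$ whose derivative at the origin equals $\phi$ up to first order, and controlling the quadratic error introduced by the projection $\pi$, which is where the uniform $C^1$ control of $\pi$ near $u_n$ and the boundedness of $\{u_n\}$ in $X$ enter. Keeping the Ekeland point strictly inside $\Sigma_c \setminus B$ — again via the barrier $\tilde{m}_{\mu}(c) < \inf_{C_{k_0}} J_{\mu}$ of Lemma \ref{defmin} — is precisely what makes the two-sided constrained derivative estimate, rather than a one-sided variational inequality, available, so that the constant $C$ above stays bounded along the sequence.
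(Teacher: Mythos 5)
Your proposal is correct and follows essentially the same route as the paper: start from the minimizing sequence $\{v_n\}$ of Lemma \ref{lm3} (which already carries the symmetry and $Q_{\mu}(v_n)=0$ properties), use the strict inequality $\tilde{m}_{\mu}(c)<\inf_{C_{k_0}}J_{\mu}$ of Lemma \ref{defmin} to keep everything away from the boundary $C_{k_0}$, and apply Ekeland's variational principle to produce the nearby almost-critical sequence $\{u_n\}$. The paper states these steps more tersely (it first shows $\{v_n\}\subset \Sigma_c\setminus\bigcup_{0<k\leq k_0+b}C_k$ and then invokes Ekeland), whereas you apply Ekeland on the closed set and verify interiority of the Ekeland points afterwards; the two variants are interchangeable and your added detail on deriving the tangential derivative estimate via the radial projection is exactly the ``standard'' content the authors leave implicit.
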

\begin{proof}
In Lemma \ref{lm3} we have obtained a sequence $\{v_n\}\subset \Sigma_c\setminus B$ of Schwarz symmetric functions, satisfying
$$J_{\mu}(v_n)\to \tilde{m}_{\mu}(c)\ \mbox{ and } Q_{\mu}(v_n)=0,\ \forall\ n\in \N. $$
It is standard to show that, for any $a >0$ there exists a $b >0$ such that
$$ J_\mu (u) \geq \inf_{u \in C_{k_0}}J_\mu(u) - a \quad \mbox{if} \quad u \in  \cup_{k \in [k_0-b, k_0 +b]} C_k.$$
Thus from \eqref{minimum2} we deduce that $\{v_n\} \subset \Sigma_c \backslash \bigcup_{0 < k \leq k_0 +b}$ for some $b>0$ and for
$n \in \N$ large enough. Thus, roughly speaking, $\{v_n\}$ stays away from the boundary. At this point we deduce, using Ekeland's variational principle,
that there exists a sequence $\{u_n\}\subset \Sigma_c$ such that
\begin{equation}\label{9}
\left\{
\begin{array}{l}
J_{\mu}(u_n)\leq J_{\mu}(v_n),\\
\|u_n-v_n\|_{X}\to 0,\\
\|J'_{\mu}|_{\Sigma_c}(u_n)\|_{X^{\ast}}\to 0.\\
\end{array}
\right.
\end{equation}
This completes the proof of the lemma.
\end{proof}

Next we show the compactness of the Palais-Smale sequences obtained in Lemmas  \ref{lm2} and \ref{lm4}.
\begin{prop}\label{c3-prop1}
Assume that $p\in (1+\frac{4}{N}, 3+\frac{4}{N}),$ $N \geq 1$. Let $\{u_n \}\subset \Sigma_c$ be a Palais-Smale sequence as obtained in Lemmas \ref{lm2} or \ref{lm4}. Then there exist a $u_{\mu}\in X \backslash \{0\} $ and a $\lambda_{\mu}\in \R$,  such that, up to a subsequence,
\begin{enumerate}
  \item [1)] $u_n\rightharpoonup  u_{\mu} > 0$,  in $X$;
  \item [2)] $J'_{\mu}(u_n)-\lambda_{\mu} u_n \to 0$,  in $X^{\ast}$;
  \item [3)] $J'_{\mu}(u_{\mu})-\lambda_{\mu} u_{\mu} = 0$,  in $X^{\ast}$.
\end{enumerate}
Moreover, if $\lambda_{\mu} < 0$, we have that
\begin{eqnarray}\label{conv1}
\lim_{n\to \infty}\|u_n-u_{\mu}\|_{X}=0.
\end{eqnarray}
\end{prop}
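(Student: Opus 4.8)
The plan is to run the usual variational scheme for a constrained problem---extract a bounded subsequence, produce the Lagrange multiplier, pass to a weak limit---while using the two special features of the sequences furnished by Lemmas \ref{lm2} and \ref{lm4}, namely $Q_\mu(u_n)\to 0$ and the almost Schwarz symmetry $\|u_n-v_n\|_X\to 0$, to rule out vanishing and to upgrade weak to strong convergence.

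First I would record that $\{J_\mu(u_n)\}$ is bounded (it converges to $\gamma_\mu(c)$ or to $\tilde{m}_\mu(c)$), so Lemma \ref{boundedness} gives that $\{u_n\}$ is bounded in $X$; up to a subsequence $u_n\rightharpoonup u_\mu$ in $X$. Since the $v_n$ are Schwarz symmetric, hence nonnegative, and $\|u_n-v_n\|_X\to 0$, we also have $v_n\rightharpoonup u_\mu$, so $u_\mu\geq 0$; moreover the almost symmetry yields the compactness $u_n\to u_\mu$ strongly in $L^q(\R^N)$ for every $q\in(2,2\cdot 2^*)$, in particular in $L^{p+1}$ and $L^4$ since $p+1,4\in(2,2\cdot 2^*)$ under our assumptions on $p$. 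For the Lagrange multiplier I would decompose any $\psi\in X$ as $\psi=\phi+c^{-1}\langle u_n,\psi\rangle_{L^2}u_n$ with $\phi\in T_{u_n}$; setting $\lambda_n:=c^{-1}\langle J'_\mu(u_n),u_n\rangle$, the bound $\|J'_\mu|_{\Sigma_c}(u_n)\|_{X^*}\to 0$ gives $J'_\mu(u_n)-\lambda_n u_n\to 0$ in $X^*$, and boundedness of $\{u_n\}$ in $X$ makes $\{\lambda_n\}$ bounded, so $\lambda_n\to\lambda_\mu$, which is 2). Passing to the weak limit in $J'_\mu(u_n)-\lambda_\mu u_n\to 0$---using the weak-continuity properties of $J'_\mu$ and the a.e.\ convergence of the gradients as in \cite{LLW}---gives 3).

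The main obstacle is 1), namely $u_\mu\neq 0$, and this is precisely where $Q_\mu(u_n)\to 0$ enters. Assume for contradiction that $u_\mu=0$. Then $\|u_n\|_{p+1}\to 0$ by the strong $L^{p+1}$ convergence, so from $Q_\mu(u_n)\to 0$ and the explicit form \eqref{defQ},
\begin{equation*}
\frac{\mu(N+4)}{4}\|\nabla u_n\|_4^4+\|\nabla u_n\|_2^2+(N+2)\int_{\R^N}|u_n|^2|\nabla u_n|^2\,dx=\frac{N(p-1)}{2(p+1)}\|u_n\|_{p+1}^{p+1}+o(1)\longrightarrow 0 .
\end{equation*}
As every term on the left is nonnegative, we deduce $\mu\|\nabla u_n\|_4^4\to 0$ and $\int_{\R^N}(1+|u_n|^2)|\nabla u_n|^2\,dx\to 0$, whence $J_\mu(u_n)\to 0$. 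For the sequence of Lemma \ref{lm2} this contradicts $J_\mu(u_n)\to\gamma_\mu(c)>0$; for the sequence of Lemma \ref{lm4}, the inclusion $u_n\in\Sigma_c\setminus B$ forces $\int_{\R^N}(1+|u_n|^2)|\nabla u_n|^2\,dx>k_0$ (the integral being strictly positive on $\Sigma_c$), again a contradiction. Hence $u_\mu\neq 0$, and since $u_\mu\geq 0$ solves the locally uniformly elliptic limit equation, the strong maximum principle gives $u_\mu>0$.

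It remains to prove strong convergence when $\lambda_\mu<0$. Testing $J'_\mu(u_n)-\lambda_\mu u_n\to 0$ and $J'_\mu(u_\mu)-\lambda_\mu u_\mu=0$ against $w_n:=u_n-u_\mu$ and subtracting gives
\begin{equation*}
\langle J'_\mu(u_n)-J'_\mu(u_\mu),\,w_n\rangle=\lambda_\mu\|w_n\|_2^2+o(1).
\end{equation*}
On the left, the $4$-Laplacian and Dirichlet parts are monotone and bound the expression below by $\|\nabla w_n\|_2^2$ plus a nonnegative multiple of $\|\nabla w_n\|_4^4$; the subcritical term is $o(1)$ by the $L^{p+1}$ convergence; and writing $|u_n|^2\nabla u_n-|u_\mu|^2\nabla u_\mu=|u_n|^2\nabla w_n+(|u_n|^2-|u_\mu|^2)\nabla u_\mu$ splits the quasilinear cross terms into the nonnegative piece $2\int_{\R^N}|u_n|^2|\nabla w_n|^2\,dx$ and remainders that vanish thanks to $u_n\to u_\mu$ in $L^4$ and the boundedness of the gradients. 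Since $\lambda_\mu<0$ the right-hand side is $\leq o(1)$, so $\|\nabla w_n\|_2\to 0$, $\|\nabla w_n\|_4\to 0$ and, rearranging, $-\lambda_\mu\|w_n\|_2^2\to 0$, i.e.\ $\|w_n\|_2\to 0$; together with $\|w_n\|_4\to 0$ this yields $\|u_n-u_\mu\|_X\to 0$. The sign $\lambda_\mu<0$ is exactly what prevents the $L^2$-mass from escaping to infinity, and the careful accounting of the quasilinear cross terms---following \cite{LLW}---is the main technical labor of this step.
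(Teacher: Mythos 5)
Your argument is correct and, for Points 1)--3), follows essentially the same route as the paper: boundedness via Lemma \ref{boundedness}, compactness in $L^q(\R^N)$ for $q\in(2,2\cdot 2^{\ast})$ through the almost Schwarz symmetry, the multiplier $\lambda_n=c^{-1}\langle J_\mu'(u_n),u_n\rangle$ exactly as in \eqref{lambda2}, and the exclusion of vanishing via the functional $Q_\mu$. (For the sequence of Lemma \ref{lm4} the paper works with $Q_\mu(v_n)=0$ and derives the contradiction from $\{v_n\}\subset\Sigma_c\setminus B$; your variant phrased in terms of $u_n$ needs the one-line remark that $Q_\mu(u_n)\to 0$ follows from $Q_\mu(v_n)=0$, $\|u_n-v_n\|_X\to 0$ and the continuity of $Q_\mu$ on bounded subsets of $X$, which you use implicitly.) The only genuine divergence is the strong convergence step under $\lambda_\mu<0$. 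The paper tests the equation against $u_n$ and $u_\mu$ separately, obtains convergence of the sum $\mu\|\nabla u_n\|_4^4+\|\nabla u_n\|_2^2-\lambda_\mu\|u_n\|_2^2+4\int_{\R^N}|u_n|^2|\nabla u_n|^2dx$ to the corresponding quantity for $u_\mu$, and concludes since each summand is nonnegative (here $-\lambda_\mu>0$ is used) and weakly lower semicontinuous, so each norm converges and uniform convexity upgrades weak to strong convergence. You instead test the difference of the equations against $w_n=u_n-u_\mu$ and exploit the monotonicity of $\xi\mapsto|\xi|^2\xi$ together with the sign of the quasilinear cross terms. Both devices are standard and both work here: yours yields the quantitative inequality $\|\nabla w_n\|_2^2+c\,\mu\|\nabla w_n\|_4^4-\lambda_\mu\|w_n\|_2^2\leq o(1)$ directly, at the price of estimating the remainder terms, while the paper's version avoids that bookkeeping. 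Finally, like the paper's proof, the symmetrization only gives $u_\mu\geq 0$ and $u_\mu\neq 0$; your additional appeal to the strong maximum principle to reach the strict positivity claimed in Point 1) is a detail the paper leaves implicit.
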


\begin{proof}[Proof of Proposition \ref{c3-prop1}]
From Lemma \ref{boundedness} we know that $\{u_n\}$ is bounded in $X$. This implies in particular the boundedness of the Schwarz symmetric sequences $\{v_n\}$ obtained in Lemmas \ref{lm2} or \ref{lm4}. Thus by \cite[Proposition 1.7.1]{TC} we conclude that up to a subsequence, there exists a $u_{\mu}\in X$, which is non-negative and Schwarz symmetric, such that
$$v_n \rightharpoonup u_{\mu}\geq 0,\ \mbox{ in } X;$$
\begin{eqnarray*}
v_n \to u_{\mu}, \ \mbox{ in } L^q(\R^N),\ \forall\ q\in (2, 2^{\ast}).
\end{eqnarray*}
By interpolation, we have that
\begin{eqnarray}\label{conv00}
v_n \to u_{\mu}, \ \mbox{ in } L^q(\R^N),\ \forall\ q\in (2, 2\cdot 2^{\ast}).
\end{eqnarray}
In view of $\|u_n-u_{\mu}\|_q \leq \|u_n-v_n\|_q+\|v_n-u_{\mu}\|_q$, one gets that
\begin{eqnarray}\label{conv2}
u_n \to u_{\mu}, \ \mbox{ in } L^q(\R^N),\ \forall\ q\in (2, 2\cdot 2^{\ast}).
\end{eqnarray}
At this point we shall use the additional information that $Q_{\mu}(u_n)\to 0$ or $Q_{\mu}(v_n) = 0,\ \forall\ n\in \N$ to show that $u_{\mu}\neq 0$. First let  $\{u_n\}\subset \Sigma_c$
be the Palais-Smale sequence constructed in Lemma \ref{lm2} and assume that  $u_{\mu} = 0$. Then by \eqref{conv2} we have that $||u_n||_{p+1} \to 0$ and using that $Q_{\mu}(u_n) \to 0$ we deduce that
$$\|\nabla u_n\|_4^4\ \to 0 \ \mbox{ and } \  \int_{\R^N}(1+|u_n|^2)|\nabla u_n|^2dx \to 0.$$
This leads to $J_{\mu}(u_n)\to 0$, which contradicts the fact that $J_{\mu}(u_n)\to \gamma_{\mu}(c)>0$. Now for the Palais-Smale sequence constructed in Lemma \ref{lm4} if we assume that $u_{\mu} =0$ we also get from \eqref{conv00} and $Q_{\mu}(v_n) = 0$ that
$$  \int_{\R^N}(1+|v_n|^2)|\nabla v_n|^2dx \to 0.$$
This contradicts the fact that $\{v_n\} \subset \Sigma_c \backslash B$. Having proved in both cases that $u_{\mu} \neq 0$, Point 1) is established. \medskip

Since $\{u_n\} \subset X$ is bounded, following Berestycki and Lions \cite[Lemma 3]{BELI}, we know that
\begin{eqnarray*}
J_{\mu}'|_{\Sigma_c}(u_n) &\longrightarrow& 0 \ \mbox{ in } X^{\ast} \\
&\Longleftrightarrow & J_{\mu}'(u_n)-\langle J_{\mu}'(u_n),u_n \rangle u_n \longrightarrow 0\ \mbox{ in } X^{\ast}.
\end{eqnarray*}
Thus for any $\phi\in X$,
\begin{eqnarray}\label{1200}
\langle J_{\mu}'(u_n) &-&\langle J_{\mu}'(u_n), u_n \rangle u_n, \phi  \rangle = \mu \int_{\R^N} |\nabla u_n|^2\nabla u_n \nabla \phi dx + \int_{\R^N} \nabla u_n \nabla \phi dx \nonumber \\
&+& 2 \int_{\R^N}\Big (  u_n \phi |\nabla u_n|^2 + | u_n|^2\nabla  u_n \nabla  \phi \Big ) dx -  \int_{\R^N} |u_n|^{p-1}u_n \phi dx  \\
&-& \lambda_n \int_{\R^N} u_n \phi dx  \longrightarrow 0,\nonumber
\end{eqnarray}
where
\begin{equation}\label{lambda2}
\lambda_n=\frac{1}{\|u_n\|_2^2}\Big\{ \mu \|\nabla u_n\|_4^4+\|\nabla u_n\|_2^2+4\int_{\R^N}|u_n|^2|\nabla u_n|^2dx-\int_{\R^N}|u_n|^{p+1}dx\Big\}.
\end{equation}

In particular $J'_{\mu}(u_n)u_n-\lambda_n \|u_n\|_2^2\to 0$, and it follows that $\{\lambda_n\}$ is bounded since  $$J'_{\mu}(u_n)u_n= \mu \|\nabla u_n\|_4^4+\|\nabla u_n\|_2^2+4\int_{\R^N}|u_n|^2|\nabla u_n|^2dx-\int_{\R^N}|u_n|^{p+1}dx$$
is bounded. Thus there exists a $\lambda_{\mu}\in \R$, such that up to a subsequence, $\lambda_n \to \lambda_{\mu}$.  This and \eqref{1200} imply Point 2). To check Point 3), it is enough, in view of Point 2), to show that for any $\phi\in X$,
\begin{eqnarray}\label{117}
\langle J'_{\mu}(u_n)- \lambda_{\mu} u_n, \phi \rangle \to \langle J'_{\mu}(u_{\mu})- \lambda_{\mu} u_{\mu}, \phi \rangle.
\end{eqnarray}
To prove \eqref{117}, note that
\begin{eqnarray*}
\langle J'_{\mu}(u_n)- \lambda_{\mu} u_n, \phi \rangle &=& \mu \int_{\R^N} |\nabla u_n|^2 \nabla u_n \nabla \phi dx  + \int_{\R^N} \nabla u_n \nabla \phi dx - \lambda_{\mu} \int_{\R^N} u_n \phi dx \\
&+& 2\int_{\R^N}  \Big (u_n \phi |\nabla u_n|^2 +  |u_n|^2\nabla u_n \nabla \phi \Big) dx - \int_{\R^N} |u_n|^{p-1}u_n \phi dx.
\end{eqnarray*}
Since $u_n \rightharpoonup u_{\mu}$ in $X$, clearly we have
$$\int_{\R^N} \nabla u_n \nabla \phi dx \to \int_{\R^N} \nabla u_{\mu} \nabla \phi dx,$$
$$\int_{\R^N} u_n \phi dx \to \int_{\R^N} u_{\mu} \phi dx,\  \int_{\R^N} |u_n|^{p-1}u_n \phi dx \to \int_{\R^N} |u_{\mu}|^{p-1}u_{\mu} \phi dx.$$
Thus we only need to prove that
\begin{equation}\label{118}
\int_{\R^N} |\nabla u_n|^2 \nabla u_n \nabla \phi dx \to \int_{\R^N} |\nabla u_{\mu}|^2 \nabla u_{\mu} \nabla \phi dx;
\end{equation}
\begin{equation}\label{119}
\int_{\R^N} u_n \phi |\nabla u_n|^2 + |u_n|^2\nabla u_n \nabla \phi dx \to \int_{\R^N} u_{\mu} \phi |\nabla u_{\mu}|^2 + |u_{\mu}|^2\nabla u_{\mu} \nabla \phi dx.
\end{equation}

But $\{|\nabla u_n|^2 \nabla u_n\}$ is bounded in $L^{4/3}(\R^N)$ since $\{\nabla u_n\}$ is bounded in $ L^4(\R^N)$. Thus $|\nabla u_n|^2 \nabla u_n \rightharpoonup |\nabla u_{\mu}|^2 \nabla u_{\mu}$ in $L^{4/3}(\R^N)$ and then we get \eqref{118}, by weak convergence for any $\nabla \phi \in  L^{4}(\R^N)$. Similarly,  using the Young inequality, we have that
$$ (|u_n| |\nabla u_n|^2)^{4/3} \leq \frac{1}{3} |u_n|^4 + \frac{2}{3}|\nabla u_n|^4,$$
$$(|u_n|^2 |\nabla u_n|)^{4/3} \leq \frac{2}{3} |u_n|^4 + \frac{1}{3}|\nabla u_n|^4.$$
These yield that both $\{|u_n| |\nabla u_n|^2\}$ and $\{|u_n|^2 |\nabla u_n|\}$ are bounded in $L^{4/3}(\R^N)$, since $\{u_n\}$ is bounded in $X$. Thus \eqref{119} holds by a similar argument. At this point, \eqref{117} holds and we have proved Point 3). Finally, we note from Points 2) and 3) that
$$\langle J'_{\mu}(u_n)-\lambda_{\mu} u_n, u_n\rangle\to \langle J'_{\mu}(u_{\mu})-\lambda_{\mu} u_{\mu}, u_{\mu} \rangle =0.$$
Using \eqref{conv2} we obtain that
\begin{eqnarray*}
\mu\|\nabla u_n\|_4^4&+&\|\nabla u_n\|_2^2-\lambda_\mu \|u_n\|_2^2+4 \int |u_n|^2|\nabla u_n|^2dx
\to \\
\mu\|\nabla u_\mu\|_4^4&+& \|\nabla u_\mu\|_2^2-\lambda_\mu \|u_\mu\|_2^2+4 \int |u_\mu|^2|\nabla u_\mu|^2dx.
\end{eqnarray*}
If $\lambda_{\mu}<0$, this implies that $\|u_n-u_{\mu}\|_X\to 0$, since $u_n\rightharpoonup u_{\mu}$ in $X$.
\end{proof}


Based on the above preliminary works, we conclude that

\begin{thm}\label{th3.1}
Assume that $p\in (1+\frac{4}{N}, 3+\frac{4}{N})$, $N \geq 1$. Then there exists a $c_0 \in (0, c(p,N))$ such that
\begin{enumerate}
\item [(1)]  For any $c \in (c_0, \infty)$ there exists a $\mu_0>0$ such that for each $\mu\in (0,\mu_0)$, the functional $J_{\mu}$ has a critical point $u_{\mu}$,  which is Schwarz symmetric and satisfies $J_\mu(u_{\mu})\leq \gamma_\mu (c)$, on $\Sigma_{\bar{c}}$ with $0<\bar{c} \leq c$, i.e., there exists a $\lambda_{\mu} \in \R $ such that $J'_{\mu}(u_{\mu})-\lambda_{\mu} u_{\mu}=0$. In addition, if $\lambda_{\mu}<0$, then we have $u_{\mu}\in \Sigma_c$ and $J_\mu(u_{\mu}) = \gamma_\mu (c)$.
\vspace{2mm}
\item [(2)] For any $c \in (c_0, c(p,N))$ there exists a $\mu_0>0$ such that for each $\mu\in (0,\mu_0)$, the functional $J_{\mu}$ has a critical point $v_{\mu}$ on $\Sigma_c$
which is Schwarz symmetric and satisfies $J_\mu(v_{\mu})= \tilde{m}_\mu (c)$, i.e., there exists a $\beta_\mu <0$ such that $J'_{\mu}(v_{\mu})-\beta_\mu v_{\mu}=0$. In addition, $\beta_{\mu} \leq \bar \beta <0$ for some $ \bar \beta <0$ independent of $\mu$.
\vspace{2mm}
\item [(3)] In the both cases, $Q_{\mu}(u_{\mu})=0$ and $Q_{\mu}(v_{\mu})=0$ hold, where the functional $Q_{\mu}$ is given by \eqref{defQ}.
\end{enumerate}
\end{thm}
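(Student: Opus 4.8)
The plan is to prove the three assertions by a single scheme applied to each level, the only varying input being the special Palais--Smale sequence. For a given level the steps are: (i) produce that Palais--Smale sequence; (ii) pass to a nontrivial weak limit carrying a Lagrange multiplier; (iii) determine the sign of the multiplier; (iv) upgrade weak to strong convergence so that the limit sits on $\Sigma_c$ at the prescribed energy. Steps (i)--(ii) are precisely Lemmas \ref{lm2}, \ref{lm4} and Proposition \ref{c3-prop1}; the real work lies in (iii).

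For part (1) I would feed the sequence of Lemma \ref{lm2} into Proposition \ref{c3-prop1}, obtaining a Schwarz symmetric $u_\mu>0$, a $\lambda_\mu\in\R$ with $J_\mu'(u_\mu)=\lambda_\mu u_\mu$, and $u_n\rightharpoonup u_\mu$ in $X$. By weak lower semicontinuity of the convex terms $\|\nabla u\|_4^4$, $\|\nabla u\|_2^2$ and of $\int|u|^2|\nabla u|^2$, together with the strong convergence $u_n\to u_\mu$ in $L^{p+1}$ from \eqref{conv2}, one gets $J_\mu(u_\mu)\le\liminf J_\mu(u_n)=\gamma_\mu(c)$; and $\bar c:=\|u_\mu\|_2^2\in(0,c]$ by weak lower semicontinuity of the $L^2$ norm and $u_\mu\neq0$, so $u_\mu\in\Sigma_{\bar c}$. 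Part (3) then comes for free: since the scaling $w\mapsto w^t$ preserves $\|w\|_2^2$, any critical point $w$ of $J_\mu$ on a sphere satisfies $Q_\mu(w)=\langle J_\mu'(w),\tfrac{d}{dt}w^t|_{t=1}\rangle=\lambda\,\tfrac12\tfrac{d}{dt}\|w^t\|_2^2|_{t=1}=0$, which applies to both $u_\mu$ and $v_\mu$. Finally, if $\lambda_\mu<0$, the concluding assertion of Proposition \ref{c3-prop1} gives $u_n\to u_\mu$ strongly in $X$, whence $\bar c=c$ and $J_\mu(u_\mu)=\gamma_\mu(c)$.

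Part (2) runs identically from Lemma \ref{lm4}, yielding a Schwarz symmetric $v_\mu>0$ and $\beta_\mu$ with $J_\mu'(v_\mu)=\beta_\mu v_\mu$ and $Q_\mu(v_\mu)=0$; the new ingredient is showing $\beta_\mu<0$. Testing the equation with $v_\mu$ and using $Q_\mu(v_\mu)=0$ to eliminate $\|v_\mu\|_{p+1}^{p+1}$ gives
\[ \beta_\mu\|v_\mu\|_2^2 = a_A\,\mu\|\nabla v_\mu\|_4^4 + a_B\,\|\nabla v_\mu\|_2^2 + a_C\int|v_\mu|^2|\nabla v_\mu|^2, \]
with $a_B=\frac{(N-2)p-(N+2)}{N(p-1)}$ and, since $p<3+\frac4N$, one checks $a_A,a_C<0$. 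A uniform lower bound is available from $Q_\mu(v_\mu)=0$ and \eqref{lQ}: $v_\mu$ cannot lie in any $C_k$ with $k\le k_0$, so $\int(1+|v_\mu|^2)|\nabla v_\mu|^2>k_0$. When $p\le\frac{N+2}{N-2}$ (thus for all admissible $p$ when $N\le3$) one has $a_B\le0$, and negativity of $\beta_\mu$, with a $\mu$-independent bound $\bar\beta<0$, follows at once. In the remaining range $a_B>0$, and I would substitute the energy identity $J_\mu(v_\mu)=\tilde m_\mu(c)$ to rewrite the right-hand side as a negative combination of $\mu\|\nabla v_\mu\|_4^4$ and $\int|v_\mu|^2|\nabla v_\mu|^2$ plus a positive multiple of $\tilde m_\mu(c)$, then use that $\tilde m_\mu(c)$ can be made arbitrarily small, uniformly in $\mu\in(0,\mu_0)$, by taking $c_0$ close to $c(p,N)$ (the test functions $\sqrt t\,u_{c(p,N)}$ of Lemma \ref{lem1} have energy tending to $0$). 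Combined with $\int(1+|v_\mu|^2)|\nabla v_\mu|^2>k_0$, this overrides the positive term and forces $\beta_\mu\le\bar\beta<0$; strong convergence then gives $v_\mu\in\Sigma_c$ and $J_\mu(v_\mu)=\tilde m_\mu(c)$.

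The genuine obstacle is step (iii) for part (2) in the range $p\in(\frac{N+2}{N-2},3+\frac4N)$, which is nonempty only for $N=4$ on a subinterval and for $N\ge5$: there the identity alone is inconclusive because $a_B>0$, and one must quantitatively balance the single positive contribution against the negative ones, using the smallness of $\tilde m_\mu(c)$ near $c(p,N)$ and the bound $\int(1+|v_\mu|^2)|\nabla v_\mu|^2>k_0$, all uniformly in $\mu$ so that $\bar\beta$ is $\mu$-independent. This is exactly why the admissible interval may have to be shrunk by moving $c_0$ closer to $c(p,N)$. By contrast, I make no attempt to sign $\lambda_\mu$ in part (1)---that would need the non-existence results of Section 2---which is why part (1) states only a conditional conclusion and imposes no extra restriction on $p$.
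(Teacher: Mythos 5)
Your overall architecture (special Palais--Smale sequences from Lemmas \ref{lm2} and \ref{lm4}, Proposition \ref{c3-prop1} for the nontrivial weak limit and the conditional strong convergence, then a Pohozaev-type identity to sign the multiplier) coincides with the paper's, and your treatment of parts (1) and (3) is essentially sound. One caveat on (3): the formal chain rule $Q_\mu(w)=\langle J_\mu'(w),\tfrac{d}{dt}w^t|_{t=1}\rangle$ is not immediate, since the generator $\tfrac{N}{2}w+x\cdot\nabla w$ need not belong to $X$; the paper instead derives the Pohozaev identity \eqref{pppp1} (via the regularity arguments of \cite{LWW2, CJS}) and combines it with the tested equation \eqref{pppp2} to get $Q_\mu(v_\mu)=0$.

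The genuine gap is exactly where you place the ``real work'': signing $\beta_\mu$ when $a_B=\frac{(N-2)p-(N+2)}{N(p-1)}\geq 0$. Your plan eliminates $\|\nabla v_\mu\|_2^2$ entirely using the energy level, leaving
\[
\beta_\mu\|v_\mu\|_2^2=(\mathrm{pos})\,\tilde m_\mu(c)-(\mathrm{pos})\,\mu\|\nabla v_\mu\|_4^4-(\mathrm{pos})\int_{\R^N}|v_\mu|^2|\nabla v_\mu|^2dx
\]
(the two coefficients do come out negative, as one can check). But the only quantitative lower bound available is $\int_{\R^N}(1+|v_\mu|^2)|\nabla v_\mu|^2dx>k_0$, and that sum contains precisely the term $\|\nabla v_\mu\|_2^2$ you have discarded: nothing in your stated chain of inferences prevents $\int_{\R^N}|v_\mu|^2|\nabla v_\mu|^2dx$ from being arbitrarily small while $\|\nabla v_\mu\|_2^2\approx k_0$, and $\mu\|\nabla v_\mu\|_4^4$ vanishes as $\mu\to0$, so the displayed identity does not force $\beta_\mu\leq\bar\beta<0$. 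The paper's fix is not to eliminate $\|\nabla v_\mu\|_2^2$: combining \eqref{pppp1} with the energy gives
\[
\frac{\beta_\mu}{2}\|v_\mu\|_2^2=J_\mu(v_\mu)-\frac{\mu}{N}\|\nabla v_\mu\|_4^4-\frac{1}{N}\Big(\|\nabla v_\mu\|_2^2+2\int_{\R^N}|v_\mu|^2|\nabla v_\mu|^2dx\Big),
\]
where the bracket dominates the $k_0$ bound; together with the smallness of $\tilde m_\mu(c)$ (for $c_0$ close to $c(p,N)$ and $\mu$ small) this yields the uniform negative bound for every $p\in(1+\frac{4}{N},3+\frac{4}{N})$ at once, with no case distinction on the sign of $a_B$. (Your case $a_B<0$ is fine; the borderline $p=\frac{N+2}{N-2}$, where $a_B=0$, suffers from the same defect as $a_B>0$.) Your route could be repaired by proving a separate lower bound on $\int_{\R^N}|v_\mu|^2|\nabla v_\mu|^2dx$ from \eqref{pf4.5.1} and $Q_\mu(v_\mu)=0$, but that is extra work the paper's rearrangement avoids.
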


\begin{proof}
Clearly, Point $(1)$ follows directly from Lemma \ref{lm2} and Proposition \ref{c3-prop1}. To show Point $(2)$, we note first that, from Lemma \ref{lm4} and Proposition \ref{c3-prop1} one can deduce that for any $c \in (c_0, c(p,N))$ there exists a $\mu_0>0$ such that for each $\mu\in (0,\mu_0)$, the functional $J_{\mu}$ has a critical point $v_{\mu}$, which is Schwarz symmetric and satisfies $J_\mu(v_{\mu})\leq \tilde{m}_\mu (c)$, on $\Sigma_{\bar{c}}$ with $0<\bar{c}\leq c$, i.e., there exists a $\beta_{\mu} \in \R $ such that $J'_{\mu}(v_{\mu})-\beta_{\mu} v_{\mu}=0$.    Moreover, if $\beta_{\mu}<0$, then we have $v_{\mu}\in \Sigma_c$ and $J_\mu(v_{\mu})= \tilde{m}_\mu (c)$. Thus we only need to verify that $\beta_{\mu} \leq \bar \beta <0$ for some $ \bar \beta <0$ independent of $\mu >0$.

On one hand, following the proofs of \cite[Lemma 5.10]{LWW2} and \cite[Lemma 3.1]{CJS}, we see that $(v_{\mu}, \beta_{\mu})$ satisfies the identity
\begin{equation}\label{pppp1}
\frac{\mu(N-4)}{4N}\|\nabla v_{\mu}\|_4^4 + \frac{N-2}{N} \left(\frac{1}{2}\|\nabla v_{\mu}\|_2^2+\int_{\R^N}|v_{\mu}|^2|\nabla v_{\mu}|^2dx  \right) -\frac{\beta_{\mu}}{2}\|v_{\mu}\|_2^2 = \frac{1}{p+1}\|v_{\mu}\|_{p+1}^{p+1}.
\end{equation}
From this identity, we obtain that
\begin{equation}\label{ones}
\tilde{m}_\mu (c) \geq J_{\mu}(v_{\mu})=  \frac{\mu}{N} \|\nabla v_{\mu}\|_4^4  + \frac{1}{N}\Big( \|\nabla v_{\mu}\|_2^2 + 2\int_{\R^N}|v_{\mu}|^2 |\nabla v_{\mu}|^2 dx\Big) +  \frac{\beta_{\mu}}{2} \|v_{\mu}\|_2^2.
\end{equation}
Now from the proof of Lemma \ref{lem1} and the definition of $\tilde{m}_\mu (c)$, we know that $\tilde{m}_\mu (c) \to 0 $ if $c_0 \to c(p,N)$ and $\mu\to 0$. Thus since $J_\mu(v_{\mu})\leq  \tilde{m}_\mu (c)$ we can assume that $J_\mu(v_{\mu})$ is arbitrarily close to $0$ (or negative). On the other hand, we claim that there exists a constant $C >0$, independent of $v_{\mu}$ and $c>0$, such that
\begin{eqnarray}\label{estimate}
\int_{\R^N} (1+2|v_{\mu}|^2)|\nabla v_{\mu}|^2dx \geq C>0.
\end{eqnarray}
Indeed, testing the equation $J'_{\mu}(v_{\mu})-\beta_{\mu} v_{\mu}=0$ by $v_{\mu}$, we obtain
\begin{equation}\label{pppp2}
\mu\|\nabla v_{\mu}\|_4^4 + \|\nabla v_{\mu}\|_2^2 + 4\int_{\R^N}|v_{\mu}|^2|\nabla v_{\mu}|^2dx - \beta_{\mu}\|v_{\mu}\|_2^2 - \|v_{\mu}\|_{p+1}^{p+1}=0.
\end{equation}
Thus from \eqref{pppp1} and \eqref{pppp2} we can derive that $Q_{\mu}(v_{\mu})=0$. Then \eqref{estimate} follows immediately from  \eqref{lQ}. Combining \eqref{ones} and \eqref{estimate} we see that, by letting $c_0$ be sufficiently close to $c(p,N)$ and $\mu>0$ small enough, we can insure that
$\beta_{\mu}< \bar \beta <0$ for some $\bar \beta <0$  independent of $\mu$. Then the proof of Point (2) is completed. Finally as for $Q_{\mu}(v_{\mu})=0$ can also show that $Q_{\mu}(u_{\mu})=0$. Thus Point (3) holds.
\end{proof}


\section{Convergence issues}

In this section, letting $\mu \to 0$, we show that the sequences of critical points of $J_{\mu}$ obtained in Theorem \ref{th3.1} converge to critical points of $J=J_{0}$ on $S(c)$.

\begin{thm}\label{th2}[Convergence issues] Assume that $c>0 $ is fixed and that $\mu_n \to 0$ as $n \to \infty$. Let $\{w_n\}\subset \Sigma_{c_n}$ be a sequence of Schwarz symmetric functions, and $\{\lambda_n\} \subset \R$,  satisfying that
$$0<\delta_0\leq c_n \leq c,\ |J_{\mu_n}(w_n)|\leq C,\ \mbox{ and }\ J'_{\mu_n}(w_n)-\lambda_n w_n=0,$$
where $\delta_0>0,$ $ C>0$ are independent of $n\in \N$. Then there exist a $w_c\in W^{1,2}\cap L^{\infty}(\R^N)\setminus \{0\}$ and a $\lambda_c\in \R$, such that  up to a subsequence, as $n\to \infty$, we have that
\begin{eqnarray}\label{101}
\lambda_n \to \lambda_c, \mbox{ in } \R,\nonumber\\
\ J'(w_c)-\lambda_c w_c = 0.
\end{eqnarray}
Moreover, if $\lambda_c<0$, then
\begin{eqnarray}\label{102}
w_n &\to& w_c,\  \mbox{ in } W^{1,2}(\R^N),\nonumber\\
w_n\nabla w_n &\to& w_c \nabla w_c,\  \mbox{ in } L^2(\R^N),\\
\mu_n \|\nabla w_n\|_4^4 &\to& 0,\nonumber
\end{eqnarray}
as $n\to \infty$. Thus $w_c\in W^{1,2}\cap L^{\infty}(\R^N)$ is a critical point of $J$ on $ S(c')$ where $c'=\lim_{n\to \infty}c_n $.

\end{thm}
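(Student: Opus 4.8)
The plan is to extract a nontrivial weak limit $w_c$, to pass to the limit in the Euler--Lagrange equation so as to obtain $J'(w_c)-\lambda_c w_c=0$, and finally, when $\lambda_c<0$, to upgrade this to the strong convergences \eqref{102}. First I would collect uniform bounds. Since $|J_{\mu_n}(w_n)|\le C$ and $p<3+\frac4N$, the argument of Lemma~\ref{boundedness} (relying on \eqref{pf4.5.1}) gives that $\{w_n\}$ is bounded in $W^{1,2}(\R^N)$, that $\{\int_{\R^N}(1+|w_n|^2)|\nabla w_n|^2\,dx\}$ and $\{\|w_n\|_{p+1}\}$ are bounded, and that $\{\mu_n\|\nabla w_n\|_4^4\}$ is bounded (observe that $\{\|\nabla w_n\|_4\}$ need \emph{not} be, as $\mu_n\to0$, so the $W^{1,4}$ bound is lost in the limit). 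Testing $J'_{\mu_n}(w_n)-\lambda_n w_n=0$ against $w_n$ and using $\|w_n\|_2^2=c_n\ge\delta_0$ shows $\{\lambda_n\}$ is bounded, so up to a subsequence $\lambda_n\to\lambda_c$ and $c_n\to c'$. Because $\int_{\R^N}|w_n|^2|\nabla w_n|^2\,dx=\frac14\|\nabla(w_n^2)\|_2^2$ is bounded and $w_n$ is bounded in $L^4$, the functions $w_n^2$ are bounded in $H^1(\R^N)$ and Schwarz symmetric; arguing as for \eqref{conv00} we get, along a subsequence, $w_n\rightharpoonup w_c\ge0$ in $W^{1,2}$, $w_n\to w_c$ a.e., and $w_n\to w_c$ in $L^q(\R^N)$ for every $q\in(2,2\cdot2^*)$, whence $\|w_n\|_{p+1}\to\|w_c\|_{p+1}$.

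The first real point is to prove $w_c\ne0$. Because each $w_n$ is an \emph{exact} critical point of $J_{\mu_n}$ on $\Sigma_{c_n}$ and the scaling $t\mapsto w_n^t$ preserves the $L^2$-norm, one has $Q_{\mu_n}(w_n)=\langle J'_{\mu_n}(w_n),\partial_t w_n^t|_{t=1}\rangle=\frac{\lambda_n}{2}\frac{d}{dt}\|w_n^t\|_2^2|_{t=1}=0$; equivalently, combine the perturbed Pohozaev identity \eqref{pppp1} with the test identity \eqref{pppp2}. By Lemma~\ref{estimates}, if $\int_{\R^N}(1+|w_n|^2)|\nabla w_n|^2\,dx\le k_0$ then $Q_{\mu_n}(w_n)>0$; hence $Q_{\mu_n}(w_n)=0$ forces $\int_{\R^N}(1+|w_n|^2)|\nabla w_n|^2\,dx>k_0$ for every $n$, with $k_0>0$ independent of $n$ (the constant in \eqref{esti11} being independent of $c$). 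Inserting this into $Q_{\mu_n}(w_n)=0$ and recalling \eqref{defQ} gives $\frac{N(p-1)}{2(p+1)}\|w_n\|_{p+1}^{p+1}\ge\int_{\R^N}(1+|w_n|^2)|\nabla w_n|^2\,dx>k_0$, so $\|w_n\|_{p+1}$ is bounded away from $0$; since $\|w_n\|_{p+1}\to\|w_c\|_{p+1}$ we conclude $w_c\ne0$.

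Next I would pass to the limit in the weak formulation of $J'_{\mu_n}(w_n)-\lambda_n w_n=0$ against $\phi\in C_0^\infty(\R^N)$. The perturbation term is harmless, since $|\mu_n\int_{\R^N}|\nabla w_n|^2\nabla w_n\nabla\phi\,dx|\le\mu_n\|\nabla w_n\|_4^3\|\nabla\phi\|_4\le C\mu_n^{1/4}\|\nabla\phi\|_4\to0$, using that $\mu_n\|\nabla w_n\|_4^4$ is bounded; the linear term and the term $\int|w_n|^{p-1}w_n\phi$ pass by weak $W^{1,2}$ and strong $L^{p+1}$ convergence. \textbf{The hard part} is the two quasilinear terms $\int_{\R^N} w_n\phi|\nabla w_n|^2\,dx$ and $\int_{\R^N}|w_n|^2\nabla w_n\nabla\phi\,dx$, which require the a.e. convergence $\nabla w_n\to\nabla w_c$; this is exactly the point where the loss of the $W^{1,4}$ bound bites and where I would follow the perturbation technique of \cite{LLW}. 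Concretely I would first establish a uniform bound $\|w_n\|_\infty\le M$ (a Moser iteration on the perturbed equation, with constants independent of $\mu_n$, using the subcriticality of $p$ and the quasilinear term), and then deduce, by a test-function argument adapted to the quasilinear structure as in \cite{LLW} (testing against $(w_n-w_c)$ localised near infinity and using the $L^\infty$ bound together with the $L^q$ convergences to control the lower-order contributions), the local strong convergence $\nabla w_n\to\nabla w_c$ in $L^2_{\mathrm{loc}}(\R^N)$, hence a.e. along a subsequence. Once $\nabla w_n\to\nabla w_c$ a.e., the uniform $L^{4/3}$ bounds on $\{|\nabla w_n|^2\nabla w_n\}$, $\{w_n|\nabla w_n|^2\}$ and $\{|w_n|^2\nabla w_n\}$ combined with a Vitali/generalized dominated convergence argument let all nonlinear terms pass to the limit, yielding $J'(w_c)-\lambda_c w_c=0$; the $L^\infty$ bound passes to $w_c$, so $w_c\in W^{1,2}\cap L^\infty(\R^N)\setminus\{0\}$.

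Finally, assume $\lambda_c<0$. Testing the equation for $w_n$ against $w_n$ gives
\[
\mu_n\|\nabla w_n\|_4^4+\|\nabla w_n\|_2^2+4\int_{\R^N}|w_n|^2|\nabla w_n|^2\,dx=\lambda_n c_n+\|w_n\|_{p+1}^{p+1}\longrightarrow\lambda_c c'+\|w_c\|_{p+1}^{p+1},
\]
while testing the limit equation against $w_c$ gives $\|\nabla w_c\|_2^2+4\int_{\R^N}|w_c|^2|\nabla w_c|^2\,dx=\lambda_c\|w_c\|_2^2+\|w_c\|_{p+1}^{p+1}$. By weak lower semicontinuity of $\|\nabla\cdot\|_2^2$ and Fatou's lemma (using $\nabla w_n\to\nabla w_c$ a.e.) the left-hand side above has liminf at least $\|\nabla w_c\|_2^2+4\int_{\R^N}|w_c|^2|\nabla w_c|^2\,dx$, so its limit forces $\lambda_c c'+\|w_c\|_{p+1}^{p+1}\ge\lambda_c\|w_c\|_2^2+\|w_c\|_{p+1}^{p+1}$, i.e. $\lambda_c(c'-\|w_c\|_2^2)\ge0$. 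Since $\|w_c\|_2^2\le c'$ and $\lambda_c<0$, this yields $\|w_c\|_2^2=c'$ and hence $w_n\to w_c$ in $L^2$. Equality then propagates through the three nonnegative terms $\mu_n\|\nabla w_n\|_4^4$, $\|\nabla w_n\|_2^2$ and $4\int_{\R^N}|w_n|^2|\nabla w_n|^2\,dx$ (each lower bound must be attained), giving $\mu_n\|\nabla w_n\|_4^4\to0$, $\|\nabla w_n\|_2^2\to\|\nabla w_c\|_2^2$ and $\int_{\R^N}|w_n|^2|\nabla w_n|^2\,dx\to\int_{\R^N}|w_c|^2|\nabla w_c|^2\,dx$. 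Thus $w_n\to w_c$ in $W^{1,2}(\R^N)$, and $w_n\nabla w_n\rightharpoonup w_c\nabla w_c$ in $L^2$ with convergence of the $L^2$-norms gives $w_n\nabla w_n\to w_c\nabla w_c$ in $L^2(\R^N)$. This establishes \eqref{102}, shows $c'=\|w_c\|_2^2$, and proves that $w_c\in W^{1,2}\cap L^\infty(\R^N)$ is a critical point of $J$ on $S(c')$.
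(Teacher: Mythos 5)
Your overall architecture matches the paper's: uniform bounds from $|J_{\mu_n}(w_n)|\le C$, compactness of Schwarz symmetric sequences giving $w_n\to w_c$ in $L^q$ for $q\in(2,2\cdot 2^*)$, a Moser iteration for the uniform $L^\infty$ bound, and the final equality-propagation argument when $\lambda_c<0$ (testing against $w_n$ and against $w_c$ and using weak lower semicontinuity of the three nonnegative terms) is exactly the paper's. Your nontriviality argument via $Q_{\mu_n}(w_n)=0$ (from the Pohozaev identity \eqref{pppp1} combined with \eqref{pppp2}) together with Lemma \ref{estimates} is sound and in fact more explicit than what the paper writes inside this proof.

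The genuine gap is at the step you yourself flag as ``the hard part''. To pass to the limit in the term $2\int_{\R^N}w_n\phi|\nabla w_n|^2\,dx$ you propose to first prove $\nabla w_n\to\nabla w_c$ in $L^2_{\mathrm{loc}}$ by testing the equation against $(w_n-w_c)$ localised, ``using the $L^\infty$ bound together with the $L^q$ convergences to control the lower-order contributions''. But the obstruction is precisely a term that is \emph{not} lower order: with $\phi=(w_n-w_c)\eta$ the equation produces $2\int w_n(w_n-w_c)\eta|\nabla w_n|^2\,dx$, where $|\nabla w_n|^2$ is only bounded in $L^1$ (the $W^{1,4}$ bound is lost as $\mu_n\to0$, as you note) and $w_n-w_c$ is small in $L^q$, $q<\infty$, but not in $L^\infty$; nothing in your list of tools makes this term vanish, so the proposed test function does not close the argument. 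The paper avoids the issue entirely: it never proves a.e.\ convergence of $\nabla w_n$, but instead tests with $\phi=\psi e^{-w_n}$ (and then $\psi e^{+w_n}$), $\psi\ge0$. The differentiation of the exponential produces the quadratic gradient term with the weight $(1+2w_n^2-2w_n)\ge\tfrac12>0$ and with a \emph{favorable sign}, so it can be dropped after an application of Fatou's lemma/weak lower semicontinuity, yielding a one-sided inequality \eqref{112}; the opposite choice of sign gives the reverse inequality, and an approximation $\psi_n\to\chi e^{w_c}$ recovers the exact identity \eqref{113}. If you want to keep your route of first establishing a.e.\ gradient convergence, you would need the Boccardo--Murat-type exponential weight on the test function (e.g.\ $(w_n-w_c)e^{\lambda(w_n-w_c)^2}\eta$) to absorb the quadratic gradient term --- i.e.\ the same idea in a different guise; as written, your step would fail.
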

\begin{proof}
To show this theorem, we borrow ideas from the proof of \cite[Theorem 1.1]{LLW}. First, since $0<\delta_0\leq c_n \leq c$, $|J_{\mu_n}(w_n)|\leq C$ and $\ J'_{\mu_n}(w_n)-\lambda_n w_n=0$, we observe from the proofs of Lemma \ref{boundedness} and Proposition \ref{c3-prop1} that $\{\int_{\R^N}(1+|w_n|^2)|\nabla w_n|^2dx\}$ is bounded and $\{\lambda_n\}$ is bounded. Thus up to a subsequence, $\lambda_n \to \lambda_c \in \R$, and noting that $\{w_n\}\subset \Sigma_c$ is Schwarz symmetric, by \cite[Proposition 1.7.1]{TC} we obtain, up to a subsequence that
\begin{eqnarray}\label{110}
w_n &\rightharpoonup& w_c,\ \mbox{ in } W^{1,2}(\R^N),\nonumber\\
w_n &\to& w_c,\ \mbox{ in } L^q(\R^N),\ \forall q\in (2, 2\cdot 2^{\ast}),\\
w_n \nabla w_n &\rightharpoonup& w_c \nabla w_c,\ \mbox{ in } L^2(\R^N),\nonumber \\
w_n &\to& w_c,\ \mbox{a.e. in } \R^N,\nonumber
\end{eqnarray}
for some $w_c\in W^{1,4}\cap W^{1,2}(\R^N)$.  Since $\{w_n\}$ satisfies $J'_{\mu_n}(w_n)-\lambda_n w_n=0$,  we have
\begin{eqnarray}\label{eq1.4}
\mu_n\int_{\R^N} |\nabla w_n|^2\nabla w_n \nabla \phi dx + \int_{\R^N} \nabla w_n \nabla \phi dx - \lambda_n \int_{\R^N} w_n \phi dx  \nonumber \\
+ 2 \int_{\R^N} \Big ( w_n \phi |\nabla w_n|^2 + | w_n|^2\nabla  w_n \nabla \phi \Big ) dx  = \int_{\R^N} |w_n|^{p-1}w_n \phi dx,
\end{eqnarray}
for any $\phi \in W^{1,4}\cap W^{1,2}(\R^N).$ Then by the Sobolev inequality and Moser iteration we may get
\begin{eqnarray}\label{1100}
\|w_n\|_{L^{\infty}(\R^N)}\leq C,\ \mbox{ and } \|w_c\|_{L^{\infty}(\R^N)}\leq C,
\end{eqnarray}
for some constant $C>0$.\\

We now show that $w_c$ satisfies that
\begin{eqnarray}\label{111}
\langle J'(w_c)-\lambda_c w_c, \phi  \rangle =0,\ \forall\ \phi \in W^{1,2}\cap L^{\infty}(\R^N).
\end{eqnarray}

In \eqref{eq1.4} we choose $\phi = \psi \exp(-w_n)$ with $\psi\in C_0^{\infty}(\R^N)$, $\psi\geq 0$. Then we have that
\begin{eqnarray*}
0&=& \mu_n\int_{\R^N} |\nabla w_n|^2 \nabla w_n (\nabla \psi \exp(-w_n) - \psi \exp (-w_n)\nabla w_n )dx\\
&+&\int_{\R^N} \nabla w_n (\nabla \psi \exp(-w_n) - \psi \exp (-w_n)\nabla w_n )dx \\
&+& 2 \int_{\R^N} |w_n|^2 \nabla w_n (\nabla \psi \exp(-w_n) - \psi \exp (-w_n)\nabla w_n )dx \\
&+& 2 \int_{\R^N} w_n \psi \exp(-w_n)|\nabla w_n|^2dx - \lambda_n \int_{\R^N} u_n \psi \exp(- w_n)dx \\
&-& \int_{\R^N} |w_n|^{p-1}w_n \psi \exp (-w_n)dx.
\end{eqnarray*}
This implies that
\begin{eqnarray*}
0&\leq& \mu_n\int_{\R^N} |\nabla w_n|^2 \nabla w_n \nabla \psi \exp(-w_n)dx +\int_{\R^N} (1+2 w_n^2)\nabla \psi \nabla w_n \exp(-w_n)dx\\
 &-& \int_{\R^N}(1+2 w_n^2-2 w_n)\psi \exp (-w_n)|\nabla w_n|^2 dx \\
&-& \lambda_n \int_{\R^N} w_n \psi \exp(- w_n)dx - \int_{\R^N} |w_n|^{p-1}w_n \psi \exp (-w_n)dx.
\end{eqnarray*}
By using \eqref{110} and the fact that $\{\mu_n\|\nabla w_n\|_4^4\}$ is bounded, we deduce that
\begin{eqnarray}\label{112}
\int_{\R^N} \nabla w_c \nabla \left(\psi \exp(- w_c) \right ) dx + \int_{\R^N} 2 |w_c|^2 \nabla w_c \nabla \left(\psi \exp(- w_c)\right )dx \nonumber \\
+ \int_{\R^N} 2 w_c \left( \psi \exp(- w_c) \right ) |\nabla w_c|^2 dx
- \lambda_c\int_{\R^N} w_c \left( \psi \exp(- w_c)\right ) dx \nonumber \\
\geq \int_{\R^N} |w_c|^{p-1}w_c \left(\psi \exp(- w_c)\right ) dx,
\end{eqnarray}
in which we also used Fatou's lemma, to get
$$\liminf_{n\to \infty} \int_{\R^N} (1+ 2w_n^2-2 w_n)|\nabla w_n|\psi \exp(-w_n)dx \geq \int_{\R^N} (1+ 2w_c^2-2 w_c)|\nabla w_c|\psi \exp(-w_c)dx.$$

Let $\chi \geq 0, \chi \in C_0^{\infty}(\R^N)$. Choose a sequence of non-negative functions $\psi_n\in C_0^{\infty}(\R^N)$ such that $\psi_n \to \chi \exp(w_c)$ in $W^{1,2}(\R^N)$, $\psi_n \to \chi \exp(w_c)$ a.e. in $\R^N$, and $\psi_n$ is uniformly bounded in $L^{\infty}(\R^N)$. Then we get from \eqref{112} that
$$\int_{\R^N} \nabla w_c \nabla \chi dx + 2\int_{\R^N}\Big( |w_c|^2\nabla w_c \nabla \chi+ w_c \chi |\nabla w_c|^2 \Big)dx- \lambda_c \int_{\R^N} w_c \chi dx\geq \int_{\R^N} |w_c|^{p-1}w_c \chi dx.$$
Similarly by choosing $\phi= \psi \exp(w_n)$, we get an opposite inequality. Thus we obtain that for any $\chi \in C_0^{\infty}(\R^N)$,
\begin{eqnarray}\label{113}
\int_{\R^N} \nabla w_c \nabla \chi dx + 2\int_{\R^N} \Big(|w_c|^2 \nabla w_c \nabla \chi + w_c \chi |\nabla w_c|^2 \Big) dx\nonumber \\
- \lambda_c\int_{\R^N} w_c \chi dx = \int_{\R^N} |w_c|^{p-1}w_c \chi dx.
\end{eqnarray}
This proves \eqref{101}. \\

Now by approximation again, we get from \eqref{113} that
\begin{eqnarray}\label{114}
\qquad \int_{\R^N} |\nabla w_c|^2dx + 4\int_{\R^N} |w_c|^2 |\nabla w_c|^2dx - \lambda_c\int_{\R^N} |w_c|^2dx= \int_{\R^N} |w_c|^{p+1}dx.
\end{eqnarray}
In \eqref{eq1.4}, we use $\phi= w_n$ to get that
\begin{eqnarray}\label{115}
\mu_n\int_{\R^N} |\nabla w_n|^4dx + \int_{\R^N} |\nabla w_n|^2 dx + 4 \int_{\R^N} |w_n|^2 |\nabla w_n|^2 dx\nonumber  \\ - \lambda_n \int_{\R^N} |w_n|^2 dx = \int_{\R^N} |w_n|^{p+1}dx.
\end{eqnarray}
and then
\begin{eqnarray}\label{116}
\mu_n\int_{\R^N} |\nabla w_n|^4dx + \int_{\R^N} |\nabla w_n|^2 dx + 4 \int_{\R^N} |w_n|^2 |\nabla w_n|^2 dx\nonumber \\
- \lambda_c \int_{\R^N} |w_n|^2 dx = \int_{\R^N} |w_n|^{p+1}dx+o(1),
\end{eqnarray}
since $\lambda_n\to \lambda_c$ and $\lim_{n\to \infty} \int_{\R^N} |w_n|^2 dx=c'\geq \delta_0>0$. Hence, if $\lambda_c <0$, using $\int_{\R^N} |w_n|^{p+1}dx \to \int_{\R^N} |w_c|^{p+1}dx$ in \eqref{110}, we conclude from \eqref{110}, \eqref{114} and \eqref{116} that, as
$n\to \infty $,
$$\mu_n \int_{\R^N} |\nabla w_n|^4 dx \to 0,\ \int_{\R^N} |\nabla w_n|^2 dx \to \int_{\R^N} |\nabla w_c|^2 dx, $$
$$\int_{\R^N} |w_n|^2 |\nabla w_n|^2 dx\to \int_{\R^N} |w_c|^2 |\nabla w_c|^2 dx,\ \int_{\R^N} |w_n|^2 dx \to \int_{\R^N} |w_c|^2 dx.$$
Thus from \eqref{101} and \eqref{1100} we deduce that $w_c\in W^{1,2}\cap L^{\infty}(\R^N)\setminus \{0\}$ is a critical point of $J$ on $S(c')$.  At this point, the proof is completed.
\end{proof}

Now we are able to end the proof of Theorem \ref{mainresult1}.

\begin{proof}[Proof of Theorem \ref{mainresult1}]
In the case $c \in [c(p,N), \infty)$ the critical point $v_c$ is just the global minimizer already obtained in \cite{CJS, JL2} whose existence is recalled in Lemma \ref{lm0}. For the other cases let us first prove that there exists a $C>0$ independent of $\mu>0$, such that
\begin{equation}\label{1111}
|J_{\mu}(u_\mu)|\leq C \quad \mbox{and} \quad |J_{\mu}(v_\mu)|\leq C
\end{equation}
where $u_\mu$ and $v_\mu$ are obtained in Theorem \ref{th3.1}. To prove \eqref{1111}, note that by definition of $\gamma_{\mu}(c)$ we have $0<J_{\mu}(u_\mu) = \gamma_{\mu}(c) \leq \gamma_1(c)$, in which $\gamma_1(c)$ is independent of $\mu>0 $. Also when $c \in (c_0, c(p,N))$ we have $0 \leq J_\mu (v_\mu) \leq J_\mu(u_\mu) \leq \gamma_1(c)$. Observe that in the case of $v_{\mu}$ the conclusion of Theorem \ref{mainresult1} follows directly from Theorems \ref{th3.1} (2) and \ref{th2}. In the case of $u_{\mu}$ fix  $c>0$ and take $\mu_n\to 0$. By Theorem \ref{th3.1} there exists a sequence of Schwarz symmetric functions $w_n$ on $ S(c_n)$ and $\lambda_n\in \R$ such that $0<c_n\leq c$, $J_{\mu_n}(w_n)\leq \gamma_{\mu_n}(c)$, $J_{\mu_n}'(w_n)-\lambda_n w_n=0$ and $Q_{\mu_n}(w_n)= 0$.
We claim that $c_n\geq \delta_0$ for some $\delta_0>0$. In fact if $c_n\to 0$ we see from \eqref{pf4.5.1} that $w_n\to 0$ in $L^{p+1}(\R^N)$.
Using the fact that $Q_{\mu_n}(w_n)= 0$ we then deduce that $\int_{\R^N}(1+w_n^2) |\nabla w_n|^2 dx \to 0$. This is a contradiction since \eqref{lQ}. Then the claim is proved. Now we can apply Theorem \ref{th2} to $\{w_n\}$ so there exist $\lambda_c\in \R$ and $w_c\neq 0$ such that $w_n\to w_c$ in $L^{p+1}(\R^N)$, $\liminf_{n\to \infty} ||w_n||^2_2 \geq ||w_c||^2_2$, and $
J'(w_c) - \lambda_c w_c=0$.  Thus, by Lemmas \ref{lem2.1}, \ref{lem2.22} and \ref{lem2} the equation $J'(w_c) - \lambda_c w_c=0$ can not have Schwarz symmetric solutions in $L^2(\R^N)$ for $\lambda_c\geq 0$. Thus $\lambda_c<0$. Going back we may say that $\lambda_n<0$ for $n$ large (or $\mu_n$ small). Then by Theorem \ref{th3.1} (1) $c_n=c$ and $w_n\in S(c)$ and $J_{\mu_n}(w_n)= \gamma_{\mu_n}(c)$ for all $n$ large. Using Theorem \ref{th2} again we have $w_c\in S(c) $ and that $w_c\in W^{1,2}\cap L^{\infty}(\R^N)$ is a critical point of $J$ on $S(c)$. The proof of Theorem \ref{mainresult1} in now completed.
\end{proof}

\begin{proof}[Proof of Lemma \ref{asymLagrange}]
Fix a $c_0>0$ large and let $v_0 \in S(c_0)$ be fixed. We consider for
$t>0$ the scaling $v_0^{t}(x):= t^{\alpha}v_0(t^{\beta}x)$, where $$\alpha= \frac{1}{3N+4-Np}, \quad \beta=\frac{p-(3+\frac{2}{N})}{3N+4-Np}.$$
Then
$$\|v_0^t\|_2^2= t\|v_0\|_2^2,\quad \|\nabla v_0^t\|_2^2= t^{\lambda_1+1}\|\nabla v_0\|_2^2 ,$$
$$\int_{\R^N}|v_0^t|^2|\nabla v_0^t|^2dx= t^{\lambda_2+1}\int_{\R^N}|v_0|^2|\nabla v_0|^2dx,$$ $$\int_{\R^N}|v_0^t|^{p+1}dx=t^{\lambda_3+1}\int_{\R^N}|v_0|^{p+1}dx,$$
where
$$\lambda_1=\frac{2p-6-\frac{4}{N}}{3N+4-Np},\quad  \lambda_2=\frac{2p-4-\frac{4}{N}}{3N+4-Np},\quad \mbox{and} \quad \lambda_3=\frac{p-1}{3N+4-Np}.$$

We observe that $\lambda_3>0$ and $\lambda_3> \max\{\lambda_1, \lambda_2\}$ if $p\in (1, 3+\frac{4}{N})$. Also $v_0^t\in S(tc_0)$, for all $t>0$, and
\begin{equation*}
\frac{J(v_0^t)}{tc_0}= \frac{1}{c_0}\cdot \left( \frac{t^{\lambda_1}}{2}\|\nabla v_0\|_2^2+ t^{\lambda_2}\int_{\R^N}|v_0|^2|\nabla v_0|^2dx- \frac{t^{\lambda_3}}{p+1}\int_{\R^N}|v_0|^{p+1}dx \right).
\end{equation*}
This implies that
$$\limsup_{t\to \infty}\frac{m(tc_0)}{tc_0}\leq \limsup_{t\to \infty}\frac{J(v_0^t)}{tc_0} = -\infty,$$
from which we deduce that if $v_c \in S(c)$ is a global minimizer of $J$ on $S(c)$ then $J(v_c) c^{-1} \to - \infty$ as $c \to \infty.$
At this point recalling, see the proof of
\cite[Lemma 4.6]{CJS}, that
$$J(v_c)= \frac{1}{N}\left(\|\nabla v_c\|_2^2 + \int_{\R^N}|v_c|^2|\nabla v_c|^2dx \right)+\frac{\beta_c}{2}\|v_c\|_2^2$$
we deduce that $\beta_c \to - \infty$ as $c \to \infty$ uniformly.
\end{proof}

\begin{remark}
Concerning the behavior of the Lagrange multiplier $\lambda_c$ associated with the mountain pass solution we conjecture that $\lambda_c \to 0$ as $c \to \infty$.
\end{remark}


\section{Relationship between ground states and global minimizers on the constraint}

In this section, we prove Theorem \ref{th1.2} which gives a relationship between the ground states of \eqref{eq1.2} and the global minimizers of $m(c)$.
\vskip2mm

We recall from Lemma \ref{lm0} and \cite[Lemma 4.6]{CJS} that when $(p,c,N)$ satisfies the following conditions:
\begin{enumerate}
  \item [(i)] $c\in (0,\infty)$, and $p\in (1, 1+\frac{4}{N}), N\geq 1$,
  \vspace{1mm}
  \item [(ii)] $c\in [c(p,N), \infty)$, and $p\in (1+\frac{4}{N}, 3+\frac{4}{N}), N\geq 1$,
\end{enumerate}
there exist a  global minimizer $v_c$ of $m(c)$ and a Lagrange multiplier $\beta_c<0$, such that $(v_c, \beta_c)$ is a solution of \eqref{eq1.2}. Also we know from \cite[Theorem 1.3]{CJS} that for $\lambda=\beta_c<0$, the equation \eqref{eq1.2} has a ground state solution. We denote
$$\mathcal{A}_{\lambda}:=\Big\{u:\ u \mbox{ is a solution of \eqref{eq1.2}} \Big\},$$
$$\mathcal{G}_{\lambda}:=\Big\{u:\ u \mbox{ is a ground state solution of \eqref{eq1.2}} \Big\}.$$

\begin{proof}[Proof of Theorem \ref{th1.2}]
For $\lambda=\beta_c<0$, let $\varphi_{\beta_c}$ be a ground state of \eqref{eq1.2}. Namely, $\varphi_{\beta_c}$ solves the minimization problem
$$l_{\beta_c}:=\inf \{ I_{\beta_c}(u):\ u\in \mathcal{A}_{\beta_c} \},$$
Since $v_c \in \mathcal{A}_{\beta_c}$, one only needs to show that
\begin{eqnarray}\label{411}
I_{\beta_c}(v_c)=l_{\beta_c}.
\end{eqnarray}
By definition of $l_{\beta_c}$, to check \eqref{411} it is enough to show that $I_{\beta_c}(v_c) \leq l_{\beta_c}$. In turn this holds if one can find a $\psi \in S(c)$ such that
\begin{eqnarray}\label{412}
I_{\beta_c}(\psi)\leq I_{\beta_c}(\varphi_{\beta_c}),
\end{eqnarray}
since $I_{\beta_c}(v_c)\leq I_{\beta_c}(\psi)\leq I_{\beta_c}(\varphi_{\beta_c})=l_{\beta_c}$.
\vskip2mm

To choose $\psi \in S(c)$ satisfying \eqref{412}, we consider the scaling $u_t(x):=\varphi_{\beta_c}(x/t), t>0$. Then $\|u_t\|_2^2=t^N\|\varphi_{\beta_c}\|_2^2$ and by the identities \eqref{poho2.1} and \eqref{poho2.2}, we have
$$I_{\beta_c}(u_t)= \Big( t^{N-2}-\frac{N-2}{N}t^N \Big)\cdot \Big[ \frac{1}{2}\|\nabla \varphi_{\beta_c}\|_2^2+ \int_{\R^N}|\varphi_{\beta_c}|^2|\nabla \varphi_{\beta_c}|^2dx \Big].$$
Thus
$$\frac{d}{dt}I_{\beta_c}(u_t)=(N-2)(1-t^2)t^{N-3}\left [\frac{1}{2}\|\nabla \varphi_{\beta_c}\|_2^2+ \int_{\R^N}|\varphi_{\beta_c}|^2|\nabla \varphi_{\beta_c}|^2dx \right ].$$
This implies that when $N \geq 2$,
$$I_{\beta_c}(u_t)\leq I_{\beta_c}(\varphi_{\beta_c}),\ \forall t>0.$$
Choosing a suitable $t_0>0$ such that $u_{t_0}\in S(c)$ and letting $\psi=u_{t_0}$ we obtain \eqref{412}. When $N=1$, since by \cite[Theorem 1.3]{CJS} the non-negative solutions of \eqref{eq1.2} for fixed $\lambda >0$ are unique the conclusion holds automatically. This completes the proof.
\end{proof}

\begin{remark}\label{RNC}
The converse of Theorem \ref{th1.2} does not hold. Indeed on one hand our mountain pass solution is non-negative. On the other hand we know in several cases that there exists a unique non-negative solution to \eqref{eq1.2} when $\lambda <0$ is fixed. This is the case in particular when $N=1$, see \cite[Theorem 1.3]{CJS} (see also \cite{AZ}). Thus when this uniqueness property holds our mountain pass solution must be a ground state. This observation shows us in particular that not all ground states of \eqref{eq1.2} for $p\in (1+\frac{4}{N}, 3+\frac{4}{N})$ can be obtained as minimizers of $J$ on the corresponding constraint.
\end{remark}

\begin{remark}\label{symmetry}
From Theorem \ref{th1.2} we can deduce in particular that any global minimizer $v_c$ has a given sign and that $|v_c|$ is a radially symmetric, decreasing function with respect to one point. This follows directly from  \cite[Theorem 1.3]{CJS}.
\end{remark}


\section{Appendix}

In this section we give the proof of Lemma \ref{lm-decay} which was given to us by T. Watanabe \cite{Wa}.

\begin{proof}[Proof of Lemma \ref{lm-decay}] To show the lemma, we use arguments from \cite[Theorem 5]{MFSM}. \medskip

First we observe that from \eqref{semi-eq-0}, $v_0$ satisfies the  ODE
\begin{equation}\label{ODE}
v_0'' + \frac{N-1}{r}v_0' + f(v_0)^pf'(v_0) = 0.
\end{equation}
Multiplying \eqref{ODE} by $2r^Nv_0'$ we get
$$r^N\Big(2v_0'' v_0' + 2 f(v_0)^pf'(v_0)v_0' \Big) + 2 (N-1)r^{N-1}|v_0'|^2=0,$$
and hence
$$r^N\Big( |v_0'|^2 + \frac{2}{p+1}f(v_0)^{p+1}  \Big)' + 2(N-1)r^{N-1}|v_0'|^2=0.$$
Thus we obtain the following identity:
\begin{eqnarray}\label{5.1.1}
 \\
\Big(r^N\Big(|v_0'|^2+ \frac{2}{p+1}f(v_0)^{p+1}\Big) \Big)' + r^{N-1}\Big( (N-2)|v_0'|^2 - \frac{2N}{p+1}f(v_0)^{p+1} \Big)=0.\nonumber
\end{eqnarray}
We put
\begin{eqnarray*}
a(r)&:=&  r^N\Big(|v_0'|^2+ \frac{2}{p+1}f(v_0(r))^{p+1}\Big),\\
b(r)&:=&  \frac{2}{p+1}r^Nf(v_0(r))^{p+1}.
\end{eqnarray*}
Then from (\ref{5.1.1}) we obtain
\begin{eqnarray}\label{5.1.2}
a'(r)= r^{N-1}\Big(\frac{2N}{p+1}f(v_0)^{p+1} - (N-2)|v_0'|^2 \Big),
\end{eqnarray}
\begin{eqnarray}\label{5.1.3}
\Big(r^{N-2}a(r)\Big)'= 2(N-1)r^{N-3}b(r).
\end{eqnarray}
Next from \cite[Lemma 2.1]{CJ}, we know that there exists $C>0$ such that
$$
|f(s)|\leq  C |s|\ \mbox{ for } |s|\leq 1 \quad \mbox{ and } \quad
|f(s)|\leq  C |s|^{1/2} \mbox{ for } |s|\geq 1.$$
Since $\frac{2N}{N-2}<p+1<\frac{4N}{N-2}$ it follows that
\begin{eqnarray*}
|f(s)|^{p+1} \leq C |s|^{p+1} \leq C |s|^{\frac{2N}{N-2}} \ \mbox{ for } |s|\leq 1,\\
|f(s)|^{p+1} \leq C |s|^{(p+1)/2} \leq C |s|^{\frac{2N}{N-2}} \ \mbox{ for } |s|\geq 1,
\end{eqnarray*}
and hence
\begin{eqnarray}\label{5.1.4}
|f(s)|^{p+1} \leq C |s|^{\frac{2N}{N-2}} \ \mbox{ for all } s\geq 0.
\end{eqnarray}
Next since $v_0\in \mathcal{D}^{1,2}(\R^N)$ is a radially symmetric function, it follows that
\begin{eqnarray}\label{5.1.5}
|S^{N-1}|\int_0^{\infty}r^{N-1}|v_0'|^2dr = \int_{\R^N}|\nabla v_0|^2dx < \infty.
\end{eqnarray}
Then by the Sobolev inequality and from \eqref{5.1.4}, we obtain
\begin{eqnarray}\label{5.1.6}
|S^{N-1}|\int_0^{\infty}r^{N-1}f(v_0)^{p+1}dr & \leq & C |S^{N-1}|\int_0^{\infty}r^{N-1}|v_0|^{\frac{2N}{N-2}}dr\nonumber \\
&=& C \int_{\R^N}|v_0|^{\frac{2N}{N-2}}dx \nonumber \\
&\leq& C\Big( \int_{\R^N}|\nabla v_0|^2dx\Big)^{\frac{N}{N-2}}< \infty.
\end{eqnarray}

Now from \eqref{5.1.2}, \eqref{5.1.5} and \eqref{5.1.6}, it follows that $a'(r)\in L^{1}(0, \infty)$. Thus there exists $a_{\infty}\in [0, \infty)$ such that
$$\lim_{r\to \infty}a(r)= a_{\infty}.$$
Moreover since
$$\int_0^{\infty}\frac{a(r)}{r}= \int_0^{\infty}r^{N-1}\Big( |v_0'|^2 +\frac{2}{p+1}f(v_0)^{p+1}  \Big)dr < \infty,$$
it follows that $a_{\infty}=0$. Thus we have
\begin{eqnarray}\label{5.1.7}
\lim_{r\to \infty}a(r)= 0.
\end{eqnarray}
We define the non-increasing function $A(r)$ by
$$A(r):= \sup_{R\geq r}a(R).$$
Then from \eqref{5.1.7}, it follows that $A(r)\to 0$ as $r\to \infty$.

Next by the definition of $a(r)$ and since $f(v_0(r))^{p+1}>0$, we have
$$a(r)> r^{N}|v_0'(r)|^2,\quad |v_0'(r)|<\Big(\frac{a(r)}{r^N}\Big)^{1/2}.$$
Since $\lim_{r\to \infty}v_0(r)=0$  and $A(r)$ is non-increasing, we obtain
\begin{eqnarray*}
v_0(r) &=& - \int_{r}^{\infty} v_0'(R)dR\\
&\leq & \int_{r}^{\infty}\Big(\frac{a(R)}{R^N}\Big)^{1/2}dR \leq C A(r)^{1/2}r^{-\frac{N-2}{2}}.
\end{eqnarray*}
Thus by the definition of $b(r)$, it follows that
\begin{eqnarray}\label{5.1.8}
b(r)\leq C r^{N}v_0(r)^{\frac{2N}{N-2}}\leq CA(r)^{\frac{N}{N-2}}.
\end{eqnarray}
Next we integrate \eqref{5.1.3} over $(r, R)$. Since $A(r)$ is non-increasing, we have
\begin{eqnarray*}
R^{N-2}a(R)- r^{N-2}a(r) &=& 2(N-1)\int_{r}^R s^{N-3}b(s)ds\\
&\leq & C A(r)^{\frac{N}{N-2}}\int_{r}^{R}s^{N-3}ds \leq C (R^{N-2}-r^{N-2})A(r)^{\frac{N}{N-2}}.
\end{eqnarray*}
Thus we obtain
\begin{eqnarray}\label{5.1.9}
a(R)-(\frac{r}{R})^{N-2}a(r)&\leq& C \Big(1- (\frac{r}{R})^{N-2}\Big)A(r)^{\frac{N}{N-2}}\nonumber\\
&\leq& C A(r)^{\frac{N}{N-2}}.
\end{eqnarray}
From \eqref{5.1.9} we have
\begin{eqnarray*}
a(R') &=& (\frac{r}{R'})^{N-2}a(r) + C A(r)^{\frac{N}{N-2}}\\
&\leq & (\frac{r}{R})^{N-2}a(r) + C A(r)^{\frac{N}{N-2}} \leq \Big((\frac{r}{R})^{N-2} + C A(r)^{\frac{2}{N-2}} \Big)A(r).
\end{eqnarray*}
for any $R'\geq R$. Taking a supremum with respect to $R'$, we get
\begin{eqnarray}\label{5.1.10}
A(R) \leq \Big((\frac{r}{R})^{N-2} + C A(r)^{\frac{2}{N-2}} \Big)A(r),\ \mbox{for } R\geq r.
\end{eqnarray}
Next we fix $q\in (\frac{(N-2)^2}{N}, N-2)$ arbitrarily. We apply an iteration technique due to Giaquinta \cite[Lemma 2.1]{MG}. We take $q<\tilde{q}<N-2$ and put $\theta= 2^{\frac{1}{N-2-\tilde{q}}} >1$. Since $A(r)\to 0$ as $r\to \infty$, there exists $r_0>0$ such that
\begin{eqnarray}\label{5.1.11}
C A(r_0)^{\frac{2}{N-2}}\leq \Big(\frac{1}{\theta}\Big)^{N-2}.
\end{eqnarray}
Applying \eqref{5.1.10} with $R=\theta r_0$ and $r=r_0$, we have from \eqref{5.1.11} that
\begin{eqnarray}\label{5.1.12}
A(\theta r_0)&\leq& \Big(\Big(\frac{1}{\theta}\Big)^{N-2} + C A(r_0)^{\frac{2}{N-2}}\Big)A(r_0) \nonumber\\
&\leq& 2 \Big(\frac{1}{\theta}\Big)^{N-2}A(r_0) = \Big(\frac{1}{\theta}\Big)^{\tilde{q}}A(r_0).
\end{eqnarray}
Using \eqref{5.1.12} iteratively, we obtain
$$A(\theta^k r_0)\leq \Big(\frac{1}{\theta}\Big)^{k\tilde{q}}A(r_0)\ \mbox{ for } k\in \N.$$

Now we choose large $k\in \N$ so that $q< \frac{k}{k+1}\tilde{q}$ holds. Then for $r_0< \theta^k r_0<r \leq \theta^{k+1}r_0$, we have
$$A(\theta^k r_0)\leq \Big(\frac{r_0}{r} \Big)^{\frac{k \tilde{q}}{k+1}}A(r_0) < \Big(\frac{r_0}{r} \Big)^{q}A(r_0).$$
Since $A(r)$ is non-increasing, it follows that
$$A(r)\leq \Big(\frac{r_0}{r}\Big)^q A(r_0)\ \mbox{ for } r\geq \theta^k r_0.$$
Moreover since $A(r)$ is bounded on $[0, \theta^k r_0]$, there exists $C>0$ such that
$$A(r)\leq C r^{-q}\ \mbox{ for } r\geq 0.$$
Then from \eqref{5.1.8}, we obtain
$$b(r)\leq C r^{-\frac{qN}{N-2}}.$$
Since $N-2< \frac{qN}{N-2}$, it follows that $r^{N-3}b(r)\in L^1(0,\infty)$. Thus from \eqref{5.1.3}, we have
$$a(r)\leq Cr^{-(N-2)},\ A(r)\leq C r^{-(N-2)}.$$
Using \eqref{5.1.8} again , we obtain
\begin{eqnarray}\label{5.1.13}
b(r) \leq Cr^{-N}.
\end{eqnarray}
By the definition of $b(r)$, we also have
\begin{eqnarray}\label{5.1.14}
f(v_0(r))^{p+1}\leq C r^{-2N}.
\end{eqnarray}

Now we integrate \eqref{5.1.3} over $(0,r)$. Then from $a(0)=0$, we have
\begin{eqnarray*}
r^{N-2}a(r) &=& \int_0^r\Big(s^{N-2}a(s) \Big)'ds\\
&=& 2(N-1)\int_0^r s^{N-3}b(s)ds \leq 2(N-1)\int_0^{\infty} s^{N-3}b(s)ds \\
&=& \frac{4(N-1)}{p+1}\int_0^{\infty} s^{2N-3}f(v_0(s))^{p+1}ds\\
&=& \frac{4(N-1)}{(p+1)|S^{N-1}|}\int_{\R^N}|x|^{N-2}f(v_0(x))^{p+1}dx=:(C^{\ast})^2.
\end{eqnarray*}
By the definition of $a(r)$, we obtain
$$r^{2(N-1)}\Big(|v_0'(r)|^2 +\frac{2}{p+1}f(v_0(r))^{p+1} \Big)\leq (C^{\ast})^2.$$
Thus it follows that
$$|v_0'(r)|\leq C^{\ast}r^{-(N-1)},$$
$$v_0(r)\leq \int_0^{\infty}|v_0'(s)|ds\leq \frac{C^{\ast}}{N-2}r^{-(N-2)}.$$
We recall that $K(r)=\frac{1}{(N-2)|S^{N-1}|}r^{-(N-2)}$. Thus putting
$$(C_{\infty})^2:= \frac{4(N-1)}{p+1}\int_{\R^N}|x|^{N-2}f(v_0)^{p+1}dx,$$
we obtain
$$v_0(r)\leq C_{\infty}K(r),\ C_{\infty}K'(r)\leq v_0'(r).$$
Finally from \eqref{5.1.3} and \eqref{5.1.13} we have
\begin{eqnarray*}
r^{2(N-1)}\Big(|v_0'(r)|^2 +\frac{2}{p+1}f(v_0(r))^{p+1} \Big) &=& (C^{\ast})^2 - 2(N-1)\int_r^{\infty} s^{N-3}b(s)ds\\
&=& (C^{\ast})^2 - O(r^{-2}).
\end{eqnarray*}
Thus from \eqref{5.1.14} we obtain
$$v_0'(r)\leq C_{\infty}K'(r)(1- O(r^{-2})).$$
By integration, we also get
$$v_0(r)\geq C_{\infty}K(r)(1- O(r^{-2})).$$
\end{proof}

\end{document}